\newcommand{\N}{\mathbb{N}}
\newcommand{\R}{\mathbb{R}}
\newcommand{\F}{\mathbb{F}}
\newcommand{\GL}{\mathrm{GL}}
\newcommand{\Tc}{\mathcal{T}}
\newcommand{\I}{\mathbbm{1}}
\newcommand{\Var}{\mathrm{Var}}
\newcommand{\cov}{\mathrm{cov}}
\newcommand{\Poi}{\mathtt{Poisson}}
\newcommand{\des}{\mathrm{des}}
\newcommand{\inv}{\mathrm{inv}}
\newcommand{\fp}{\mathrm{fp}}
\newcommand{\up}{\mathrm{up}}
\newcommand\bslash{\char`\\}
\newcommand{\Address}{{
\bigskip
\footnotesize

\textsc{Department of Mathematics, University of Southern California, Los Angeles, CA}\par\nopagebreak
\textit{E-mail address}: \texttt{paguyo@usc.edu}
}}
\def\bal#1\eal{\begin{align*}#1\end{align*}}
\newtheorem{theorem}{Theorem}[section]
\newtheorem{lemma}[theorem]{Lemma}
\newtheorem{proposition}[theorem]{Proposition}
\newtheorem{corollary}[theorem]{Corollary}
\title[Fixed points, descents, \& inversions in double cosets of $S_n$]{Fixed points, descents, and inversions in parabolic double cosets of the symmetric group} 
\author{J. E. Paguyo}
\subjclass[2020]{60C05, 60F05}
\keywords{double cosets, contingency tables, Fisher-Yates distribution, Stein's method, size-bias coupling, dependency graphs, central limit theorem, large deviations}
\begin{document}


\begin{abstract}
We consider statistics on permutations chosen uniformly at random from fixed parabolic double cosets of the symmetric group. 
Under some mild conditions, we show that the distribution of fixed points is asymptotically Poisson and establish central limit theorems for the distribution of descents and inversions. 
Our proofs use Stein's method with size-bias coupling and dependency graphs, which also give convergence rates for our distributional approximations.
As applications of our size-bias coupling and dependency graph constructions, we also obtain concentration of measure results. 
\end{abstract}

\maketitle



\section{Introduction}

Let $H$ and $K$ be subgroups of a finite group $G$. Define an equivalence relation on $G$ by
\bal
s \sim t \iff s = htk, \qquad \text{for $s,t \in G$, $h \in H$, $k \in K$}.
\eal
The equivalence classes are the {\em double cosets} of $G$. Let $HsK$ denote the double coset containing the element $s$ and let $H \bslash G / K$ denote the set of double cosets. 

Let $\lambda = (\lambda_1, \ldots, \lambda_I)$ be a {\em partition} of $n$, so that $\lambda_1 \geq \dotsb \geq \lambda_I > 0$ and $\lambda_1 + \dotsb + \lambda_I = n$. Let $\ell(\lambda)$ be the length of $\lambda$. 
The {\em parabolic subgroup}, $S_\lambda$, is the set of permutations in $S_n$ that permute $\{1,\ldots,\lambda_1\}$ among themselves, $\{\lambda_1 + 1, \ldots, \lambda_1 + \lambda_2\}$ among themselves, and so on. 
Thus we can write $S_\lambda \cong S_{\lambda_1} \times \dotsb \times S_{\lambda_I}$. 

Let $\mu = (\mu_1,\ldots, \mu_J)$ be another partition of $n$. Then $S_\lambda \bslash S_n / S_\mu$ are the {\em parabolic double cosets} of $S_n$. 
Let $\Tc_{\lambda, \mu}$ be the set of $I \times J$ {\em contingency tables} with nonnegative integer entries whose row sums equal $\lambda$ and column sums equal $\mu$. 
Then the parabolic double cosets $S_\lambda \bslash S_n / S_\mu$ are in bijection with $\Tc_{\lambda, \mu}$,

The bijection can be described as follows. For the partition $\lambda$, define the {\em $\lambda_i$-block} to be the set of elements 
$L_i  := \{\lambda_1 + \dotsb + \lambda_{i-1}+1, \lambda_1 + \dotsb + \lambda_{i-1} + 2, \ldots, \lambda_1 + \dotsb + \lambda_i\}$, for $1 \leq i \leq \ell(\lambda)$, where $\lambda_0$ is defined to be zero. 
Define the {\em $\mu_i$-block}, $M_i$, analogously for the partition $\mu$. 
Fix $\sigma \in S_n$. Then its corresponding contingency table $T = T(\sigma) = (T_{ij})$ is defined to be the table such that entry $T_{ij}$ is the number of elements from the set $M_j$ which occur in the positions of $\sigma$ given by the set $L_i$. 
This gives a map from $S_n$ to $\Tc_{\lambda, \mu}$. 

Thus we may write $T = T(\sigma)$ as the contingency table representing the double coset $S_\lambda \sigma S_\mu$, 
and we say that the double coset $S_\lambda \sigma S_\mu$ is {\em indexed} by $T$. 

The uniform distribution on $S_n$ induces the {\em Fisher-Yates distribution} on contingency tables, given by
\bal
P_{\lambda, \mu}(T) = \frac{1}{n!} \frac{(\prod_i \lambda_i!) (\prod_j \mu_j!)}{\prod_{i,j} T_{ij}!}.
\eal
It follows that the size of the double coset indexed by $T$ is given by
\bal
|S_\lambda \sigma S_\mu| = \frac{(\prod_i \lambda_i!) (\prod_j \mu_j!)}{\prod_{i,j} T_{ij}!}. 
\eal
We remark that enumerating parabolic double cosets remains a difficult problem; in fact, it is $\#P$-complete \cite{DG95}. 

Contingency tables are a mainstay in statistics and there has been extensive work on their statistical properties. 
The importance of contingency tables can be seen in the fact that they arise when a population of size $n$ is classified according to two discrete categories. 
Ronald Fisher studied the distribution of $I \times J$ contingency tables conditioned on fixed row and column sums, $\lambda$ and $\mu$, respectively. 
Then the conditional probability $P(T \mid \lambda, \mu)$ of obtaining the contingency table $T$ is precisely the Fisher-Yates distribution. 

Under the Fisher-Yates distribution, Kang and Klotz \cite{KK98} showed that the joint distribution of the entries of the table $T = (T_{ij})$ is asymptotically multivariate normal. 
As corollaries, they also showed that each entry $T_{ij}$ is asymptotically normal, and that the chi-squared statistic
\bal
\chi^2(T) = \sum_{i,j} \frac{\left( T_{ij} - \frac{\lambda_i \mu_j}{n} \right)^2}{\frac{\lambda_i \mu_j}{n}},
\eal
which measures how ``close" a table is to a product measure on tables, is asymptotically chi-squared distributed with $(I-1)(J-1)$ degrees of freedom \cite{KK98}. 
This shows that a typical contingency table looks like the {\em independence table}, $T^* = (T_{ij}^*) = \left(\frac{\lambda_i \mu_j}{n}\right)$. Moreover, it follows that the largest double cosets are those which are closest to the independence table. 
Barvinok \cite{Bar10} obtained analogous results for uniformly distributed contingency tables.  

Contingency tables can be partially ordered as follows. Let $T$ and $T'$ be contingency tables with the same row and column sums. 
Then $T'$ {\em majorizes} $T$, denoted by $T \prec T'$, if the largest element in $T'$ is greater than the largest element in $T$, the sum of the two largest elements in $T'$ is greater than the sum of the two largest elements in $T$, etc. 
Let $T,T'$ be Fisher-Yates distributed contingency tables. It follows that if $T \prec T'$, then $P(T) > P(T')$; see \cite{DS21} for a short proof. 
Joe \cite{Joe85} proved that the independence table $T^*$ is the unique smallest table in majorization order among all contingency tables with the same row and column sums. 

A natural statistic on contingency tables is the number of zero entries. Recall that most tables are close to the independence table $T^*$, which contains no zero entries, and so we expect most contingency tables to have no zero entries. 
Diaconis and Simper \cite{DS21} showed, under some mild hypotheses, that the number of zeros in a Fisher-Yates distributed contingency table is asymptotically Poisson. 
Their proof uses the observation that a Fisher-Yates distributed contingency table is equivalent to rows of independent multinomial vectors, conditioned on the column sums. 
As open problems, they asked for the distribution of the positions of the zeros, and for the size and distribution of the largest entry in a Fisher-Yates distributed contingency table.

Another line of work is to consider double cosets of other groups $G$ and subgroups $H,K$ and study the distribution that a uniform distribution of $G$ induces on $H \bslash G / K$. The following two examples are treated in detail in \cite{DS21}. 

Let $G = \GL_n(\F_q)$, where $\F_q$ is a finite field of size $q$, and let $H = K$ be the subgroup of lower triangular matrices. 
Then the double cosets $H \bslash G / K$ are indexed by permutations, and the uniform distribution on $G$ induces the {\em Mallows measure} on $S_n$,
\bal
p_q(\sigma) = \frac{q^{\inv(\sigma)}}{[n]_q!},
\eal
where $\inv(\sigma)$ denotes the number of inversions of the permutation $\sigma$ and $[n]_q! = \prod_{i=1}^n (1 + q + \dotsb + q^{i-1})$. 

Let $G = S_{2n}$ be the symmetric group on $[2n]$, and let $H = K$ be the hyperoctahedral subgroup of centrally symmetric permutations, which are isomorphic to $C_2^n \rtimes S_n$, the group of symmetries of an $n$-dimensional hypercube. 
Then the double cosets are indexed by the partitions of $n$, and the uniform distribution on $G$ induces the {\em Ewens's sampling formula}, 
\bal
p_q(\lambda) = \frac{n! q^{\ell(\lambda)}}{z \cdot z_\lambda},
\eal
where $\ell(\lambda)$ is the length of the partition $\lambda$, $z_\lambda = \prod_{i=1}^n i^{a_i} a_i!$, and $z = q(q+1)\dots (q+n-1)$, with $a_i$ the number of parts of $\lambda$ of size $i$. 

As Diaconis and Simper remarked in \cite{DS21}, enumeration by double cosets can lead to interesting mathematics. 


\subsection{Main Results}

Let $\mu$ and $\nu$ be probability distributions. The {\em total variation distance} between $\mu$ and $\nu$ is
\bal
d_{TV}(\mu, \nu) := \sup_{A \subseteq \Omega} |\mu(A) - \nu(A)|, 
\eal
where $\Omega$ is a measurable space. The {\em Kolmogorov distance} between $\mu$ and $\nu$ is 
\bal
d_K(\mu, \nu) := \sup_{x\in \R} |\mu(-\infty, x] - \nu(-\infty, x]|.
\eal
If $X$ and $Y$ are random variables with distributions $\mu$ and $\nu$, respectively, then we write $d_K(X,Y)$ to denote the Kolmogorov distance between the distributions of $X$ and $Y$, and similarly for $d_{TV}$. 

We initiate the study of statistics on permutations chosen uniformly at random from a fixed parabolic double coset in $S_\lambda \bslash S_n / S_\mu$. 
This line of study was suggested by Diaconis and Simper in \cite{DS21}, as a parallel to recent work on permutations from fixed conjugacy classes of $S_n$ \cite{Ful98, Kim19, KL20, KL21, FKL19}. 
Under some mild conditions, we obtain limit theorems, with rates of convergence, for the number of fixed points, descents, and inversions. 

Let $\sigma \in S_n$. Then $\sigma$ has a {\em fixed point} at $i \in [n]$ if $\sigma(i) = i$. 
Let $\fp(\sigma)$ be the number of fixed points of $\sigma$. Then we can decompose $\fp(\sigma)$ into a sum of indicator random variables as 
$\fp(\sigma) = \sum_{i = 1}^n \theta_i$, where $\theta_i = \I_{\{\sigma(i) = i\}}$ is the indicator random variable that $\sigma$ has a fixed point at index $i$. Define $A_{k\ell} := L_k \cap M_\ell \subseteq [n]$ for all $1 \leq k \leq I$ and $1 \leq \ell \leq J$. 
Note that $A_{k\ell}$ may possibly be empty and that the sets $\{ A_{k\ell} \}$ form a disjoint partition of $[n]$. 

Our first result shows, under some hypotheses on the partitions $\lambda,\mu$ and the contingency table $T$, 
that the number of fixed points of a permutation chosen uniformly at random from a fixed parabolic double coset indexed by $T$ is asymptotically Poisson.
Moreover, we obtain an explicit error bound on the distributional approximation. 

\begin{theorem}\label{PoissonLimitTheorem} 
Let $\lambda = (\lambda_1,\ldots, \lambda_I)$ and $\mu = (\mu_1, \ldots, \mu_J)$ be two partitions of $n$. 
Let $\sigma \in S_n$ be a permutation chosen uniformly at random from a fixed double coset in $S_\lambda \bslash S_n / S_\mu$ indexed by $T$. 
Suppose $T_{k\ell} \geq 1$ for all $1 \leq k \leq I, 1 \leq \ell \leq J$ such that $A_{k\ell} \neq \emptyset$. 
Let $Y_n$ be a Poisson random variable with rate $\nu_n := E(\fp(\sigma))$. Then 
\bal
d_{TV}(\fp(\sigma), Y_n) \leq \frac{5(I+J-1) \min\{1, \nu_n^{-1}\}}{\max\{\lambda_I, \mu_J\}}. 
\eal

If the partitions $\lambda$ and $\mu$ also satisfy
\begin{enumerate}
\item $\displaystyle{\lim_{n \to \infty} \lambda_I} = \infty$ and $\displaystyle{\lim_{n \to \infty} \mu_J} = \infty$, \\
\item $\displaystyle{\lim_{n \to \infty} (I + J - 1) = C}$, for some constant $C \in \N$. 
\end{enumerate}
and if $\limsup_{n \to \infty} \nu_n = \nu < \infty$, then $d_{TV}(\fp(\sigma), Y) \to 0$ as $n \to \infty$, where $Y$ is a Poisson random variable with rate $\nu$, so that $\fp(\sigma) \xrightarrow{d} Y$. 
\end{theorem}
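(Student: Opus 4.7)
The plan is to decompose $\fp(\sigma) = \sum_{i=1}^n \theta_i$ with $\theta_i = \I_{\{\sigma(i) = i\}}$ and apply the Chen--Stein size-bias bound for Poisson approximation,
\bal
d_{TV}(W, \Poi(\nu_n)) \leq \min\{1, \nu_n^{-1}\} \nu_n\, \E|W + 1 - W^s|,
\eal
where $W = \fp(\sigma)$ and $W^s$ is a size-biased copy of $W$ coupled to $W$ on the same probability space. The hypothesis $T_{k\ell} \geq 1$ for every nonempty $A_{k\ell}$ is precisely what will let us build such a coupling by local swaps that preserve the contingency table.

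A uniform sample $\sigma$ from the double coset indexed by $T$ can be generated in two stages: for each column block $M_\ell$, partition its $\mu_\ell$ elements uniformly into subsets of sizes $T_{1\ell}, \ldots, T_{I\ell}$ destined for $L_1, \ldots, L_I$; then uniformly permute the resulting $\lambda_k$ values inside each $L_k$. For $i \in A_{k\ell}$ the event $\theta_i = 1$ requires that the value $i \in M_\ell$ be assigned to row block $L_k$ (probability $T_{k\ell}/\mu_\ell$) and then placed at position $i$ inside $L_k$ (probability $1/\lambda_k$), so $p_i := P(\theta_i = 1) = T_{k\ell}/(\lambda_k \mu_\ell)$ and $\nu_n = \sum_{k,\ell} |A_{k\ell}|\, T_{k\ell}/(\lambda_k \mu_\ell)$. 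I would size-bias by drawing $I$ with $P(I=i) = p_i/\nu_n$ and building $\sigma^{(I)}$ distributed as $\sigma \mid \theta_I = 1$ on the same probability space, setting $W^s = \fp(\sigma^{(I)})$. Explicitly, for $I \in A_{k\ell}$ and $j = \sigma^{-1}(I) \in L_{k'}$: if $k' = k$ transpose the values at positions $I$ and $j$; if $k' \neq k$ use $T_{k\ell} \geq 1$ to pick uniformly at random a position $p \in L_k$ whose value lies in $M_\ell \setminus \{I\}$ and perform the two transpositions that first move $I$ into position $p$ and then into position $I$. In both cases the contingency table is preserved, and a symmetry argument---using uniformity of within-block permutations and of each column-block partition---shows $\sigma^{(I)}$ has the correct conditional distribution.

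The key estimate is $\E|W + 1 - W^s|$. The forced fixed point at $I$ accounts for the ``$+1$,'' and since $\sigma^{(I)}$ differs from $\sigma$ at only two or three positions, only the indicators at those auxiliary positions can contribute to $|W + 1 - W^s|$. Bounding the probability that an auxiliary swap accidentally creates or destroys an extra fixed point, then averaging over the random choice of $I$ weighted by $p_I/\nu_n$, should yield a bound of order $(I + J - 1)/\max\{\lambda_I, \mu_J\}$: the factor $I + J - 1$ comes from the row and column blocks that the auxiliary swap partner may belong to, and $\max\{\lambda_I, \mu_J\}$ is the smallest denominator $\lambda_k$ or $\mu_\ell$ the coupling produces. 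Tracking these constants carefully to arrive at the announced $5(I+J-1)\min\{1,\nu_n^{-1}\}/\max\{\lambda_I, \mu_J\}$ is the main obstacle; the randomized tie-breaking in the second case is essential both for distributional correctness and for keeping the ``bad'' configurations few.

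The asymptotic claim follows quickly. Under condition (1), $\max\{\lambda_I, \mu_J\} \to \infty$; under condition (2), $I + J - 1$ is eventually constant, so the explicit bound forces $d_{TV}(\fp(\sigma), Y_n) \to 0$. Since $\nu_n$ is bounded and $\limsup_n \nu_n = \nu$, a standard comparison of Poisson PMFs gives $d_{TV}(\Poi(\nu_n), Y) \to 0$ along any subsequence where $\nu_n \to \nu$, and the triangle inequality yields $\fp(\sigma) \xrightarrow{d} Y$ as $n \to \infty$.
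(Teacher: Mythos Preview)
Your approach is essentially the paper's: size-bias $W=\fp(\sigma)$ by selecting an index $I$ with probability $p_I/\nu_n$, build $\sigma^{(I)}$ by table-preserving swaps, and apply the Chen--Stein bound $d_{TV}(W,\Poi(\nu_n))\le \min\{1,\nu_n\}\,\E|W+1-W^s|$. Two small differences are worth noting. First, the paper's coupling uses the \emph{simple} swap not only when $\sigma^{-1}(I)$ lies in the same $\lambda$-block as $I$, but also when the value $\sigma(I)$ lies in the same $\mu$-block as the value $I$; only when \emph{both} fail does it invoke the randomized two-step swap. Second, where you leave the estimate of $\E|W+1-W^s|$ as ``the main obstacle,'' the paper carries out an explicit three-case analysis (the cases being: $I$ already fixed; the simple swap hits a two-cycle; the auxiliary point $z$ in the two-step swap is itself fixed), each contributing a term bounded by $T_{k\ell}/(\lambda_k\mu_\ell)$, and the sum over the two-cycle possibilities together with the other two terms gives $\E|W-V_i|\le 5T_{k\ell}/(\lambda_k\mu_\ell)$; the final bound then follows from $|A_{k\ell}|\le\min\{\lambda_k,\mu_\ell\}$ and the fact that at most $I+J-1$ of the $A_{k\ell}$ are nonempty.
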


The proof uses size-bias coupling and Stein's method. The main advantage of using Stein's method is that we are able to obtain error bounds on our distributional approximations, in addition to proving limit theorems. 

We remark that Stein's method via size-bias coupling has been successfully used to prove Poisson limit theorems; some examples are:  
Angel, van der Hofstad, and Holmgren \cite{AHH19} for the number of self-loops and multiple edges in the configuration model, 
Arratia and DeSalvo \cite{AD17} for completely effective error bounds on Stirling numbers of the first and second kinds, 
Goldstein and Reinert \cite{GR06} for Poisson subset numbers, and 
Holmgren and Janson for sums of functions of fringe subtrees of random binary search trees and random recursive trees \cite{HJ15}.

The study of fixed points of random permutations began with Montmort \cite{Mon08}, who showed that the number of fixed points in a uniformly random permutation is asymptotically Poisson with rate $1$. 
Since then, the topic has branched out in many different directions. 
Diaconis, Fulman, and Guralnick \cite{DFG08} sought generalizations and obtained limit theorems for the number of fixed points under primitive actions of $S_n$.
In \cite{DEG14}, Diaconis, Evans, and Graham established a relationship between the distributions of fixed points and unseparated pairs in a uniformly random permutation. 
Mukherjee \cite{Muk16} used the recent theory of permutation limits to derive the asymptotic distributions of fixed points and cycle structures for several classes of non-uniform distributions on $S_n$, which include Mallows distributed permutations. 
More recently, Miner and Pak \cite{MP14} and Hoffman, Rizzolo, and Slivken \cite{HRS17} studied fixed points in random pattern-avoiding permutations and showed connections with Brownian excursion. 

Let $\sigma \in S_n$. A pair $(i,j) \subseteq [n]^2$ is a {\em $d$-descent}, or {\em generalized descent}, of $\sigma$ if $i < j \leq i + d$ and $\sigma(i) > \sigma(j)$. 
In particular, a {\em descent} corresponds to a $1$-descent and an {\em inversion} corresponds to an $(n-1)$-descent. 
Let $\des_d(\sigma)$ be the number of $d$-descents, $\des(\sigma)$ the number of descents, and $\inv(\sigma)$ the number of inversions in $\sigma \in S_n$. 

Our next three results establish central limit theorems for the number of descents, the number of $d$-descents for fixed $d$, and the number of inversions in a permutation chosen uniformly at random from a fixed double coset indexed by $T$ 
by providing upper bounds on the Kolmogorov distances to the standard normal of order $n^{-1/2}$.

\begin{theorem}\label{CLTdescents}
Let $\lambda = (\lambda_1,\ldots, \lambda_I)$ and $\mu = (\mu_1, \ldots, \mu_J)$ be two partitions of $n$ such that $I = o(n)$. 
Let $\sigma \in S_n$ be a permutation chosen uniformly at random from a fixed double coset in $S_\lambda \bslash S_n / S_\mu$. 
Let $W_n := \frac{\des(\sigma) - \mu_n}{\sigma_n}$, where $\mu_n := E(\des(\sigma))$ and $\sigma_n^2 := \Var(\des(\sigma))$. Then
\bal
d_K(W_n, Z) \leq O(n^{-1/2}),
\eal
where $Z$ is a standard normal random variable, so that $W_n \xrightarrow{d} Z$, as $n \to \infty$. 
\end{theorem}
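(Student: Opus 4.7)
The plan is to apply Stein's method for normal approximation via a size-bias coupling, following the general framework of Goldstein and Reinert. Writing $\des(\sigma) = \sum_{i=1}^{n-1} X_i$ with $X_i = \I_{\{\sigma(i) > \sigma(i+1)\}}$, I would first parametrize the uniform measure on the double coset indexed by $T$ by the two-stage construction implicit in the contingency-table bijection: for each row block $L_k$, a uniform random set partition of $L_k$ into labeled subsets of sizes $T_{k1},\ldots,T_{kJ}$, and independently, for each column block $M_\ell$, a uniform random bijection from the positions assigned to column $\ell$ onto $M_\ell$. Writing $r(i)$ for the row block of position $i$ and $c(i)$ for the column block of $\sigma(i)$, and using that $M_1,M_2,\ldots$ are consecutive intervals of $[n]$,
\bal
X_i = \I_{\{c(i) > c(i+1)\}} + \I_{\{c(i) = c(i+1)\}}\,\I_{\{\mathrm{rank}(\sigma(i)) > \mathrm{rank}(\sigma(i+1))\}}.
\eal
The mean $\mu_n$ follows from a direct case analysis, and the variance $\sigma_n^2 = \sum_{i,j}\cov(X_i,X_j)$ splits according to whether the pairs $\{i,i+1\}$ and $\{j,j+1\}$ share row blocks and whether their images share column blocks. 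Using the hypothesis $I = o(n)$, which forces the number of ``row-boundary'' positions $i$ with $r(i) \neq r(i+1)$ to be $o(n)$, this should give $\sigma_n^2 = \Theta(n)$.

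For the coupling, set $p_i = \E X_i$, sample a random index $I^\star$ with $\Pb(I^\star = i) = p_i/\mu_n$, and define $\sigma^{(I^\star)}$ from $(\sigma, I^\star)$ by: $\sigma^{(I^\star)} = \sigma$ if $X_{I^\star} = 1$, and otherwise swap the values $\sigma(I^\star)$ and $\sigma(I^\star+1)$. The central algebraic observation is that this value-swap preserves the contingency table $T$ whenever $r(I^\star) = r(I^\star+1)$ or $c(I^\star) = c(I^\star+1)$, since it leaves every row-column incidence count $|L_k \cap \sigma^{-1}(M_\ell)|$ unchanged; in that case the swap is an involution on the double coset pairing elements with opposite values of $X_{I^\star}$, so $\sigma^{(I^\star)}$ is uniform on $\{X_{I^\star} = 1\}$, as required for a size-bias coupling. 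For the at most $\ell(\lambda) - 1 = o(n)$ boundary indices with $r(I^\star) \neq r(I^\star+1)$ and $c(I^\star) \neq c(I^\star+1)$, I plan to substitute a within-row swap in $L_{r(I^\star)}$ exchanging $\sigma(I^\star)$ with some $\sigma(I')$ for which $c(I') > c(I^\star+1)$; this stays in the double coset and forces $X_{I^\star} = 1$, with a symmetric fallback in $L_{r(I^\star+1)}$ when no such $I'$ exists. In every case the modification affects only $X_{I^\star-1}, X_{I^\star}, X_{I^\star+1}$, so $|W^s - W| \leq 3$ almost surely, where $W = \des(\sigma)$ and $W^s$ is its size-biased version.

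Applying the Goldstein-Reinert Kolmogorov bound
\bal
d_K\!\left(\frac{W-\mu_n}{\sigma_n}, Z\right) \leq \frac{6\mu_n}{\sigma_n^2}\sqrt{\Var\bigl(\E[W^s - W \mid W]\bigr)} + \frac{2\mu_n}{\sigma_n^3}\,\E\bigl[(W^s - W)^2\bigr]
\eal
immediately yields $O(n^{-1/2})$ for the second term from $|W^s - W| \leq 3$ together with $\mu_n, \sigma_n^2 = \Theta(n)$. The hard part will be controlling the first term, i.e., showing $\Var(\E[W^s - W \mid W]) = O(n^{-1})$. The plan is to write $\E[W^s - W \mid \sigma] = \sum_i (p_i/\mu_n)\,\Delta_i(\sigma)$, where each $\Delta_i(\sigma)$ depends only on $(X_{i-1}, X_i, X_{i+1})$ and the local row-column block data near position $i$, and then bound the variance by an Efron-Stein argument applied to the two-stage representation, using that resampling the Stage 1 partition of a single row $L_k$ or the Stage 2 bijection of a single column $M_\ell$ perturbs at most $O(\lambda_k)$ or $O(\mu_\ell)$ of the $\Delta_i$'s. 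A secondary subtlety is the rigorous size-bias verification for boundary indices, where the coupling is no longer a single involution; this reduces to exhibiting an explicit bijection between the ``swap-unavailable'' subsets of $\{X_{I^\star} = 0\}$ and $\{X_{I^\star} = 1\}$ within the double coset.
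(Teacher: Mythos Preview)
Your approach via size-bias coupling is genuinely different from the paper's, which uses a dependency graph: the paper observes that $I_i$ and $I_j$ are independent unless $|i-j|\le 1$ or $i,j$ are consecutive $\lambda$-borders, builds a dependency graph of maximal degree $4$, verifies $\sigma_n^2\asymp n/12$, and applies Hofer's Stein bound directly. That route bypasses both the coupling construction and the conditional-variance estimate entirely.

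Your proposal has two real gaps. First, the Efron--Stein step cannot deliver $\Var\bigl(\E[W^s-W\mid\sigma]\bigr)=O(n^{-1})$ as written. Resampling the Stage~1 partition of a single row $L_k$ changes $O(\lambda_k)$ of the $\Delta_i$, each of size $O(1/\mu_n)=O(1/n)$, so the squared increment is $O(\lambda_k^2/n^2)$; summing over $k$ gives $\sum_k\lambda_k^2/n^2$, which is $O(1)$ rather than $O(1/n)$ (and is exactly of order $1$ when $\lambda=(n)$). The same problem occurs for columns. What actually controls this variance is the local dependence of the $\Delta_i$'s themselves: $\cov(\Delta_i,\Delta_j)=0$ once $|i-j|$ exceeds a constant and $i,j$ are not both borders, which is precisely the dependency-graph structure the paper exploits. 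A direct covariance expansion of $\sum_i(p_i/\mu_n)\Delta_i$ along those lines would work; Efron--Stein on the row/column blocks does not.

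Second, the boundary coupling is not correct as stated. When $r(I^\star)\ne r(I^\star+1)$ and $c(I^\star)\ne c(I^\star+1)$ you propose swapping $\sigma(I^\star)$ with some $\sigma(I')$ for $I'\in L_{r(I^\star)}$. This does stay in the double coset, but it also changes $X_{I'-1}$ and $X_{I'}$, so ``the modification affects only $X_{I^\star-1},X_{I^\star},X_{I^\star+1}$'' and hence $|W^s-W|\le 3$ are false in this case. More seriously, $P(X_{I^\star}=1)=\tfrac12-\tfrac{f(k)}{2\lambda_k\lambda_{k+1}}\ne\tfrac12$ at a border, so no single involution can pair $\{X_{I^\star}=0\}$ with $\{X_{I^\star}=1\}$; the ``explicit bijection'' you defer to does not exist in general, and you would instead need a randomized coupling that reproduces the exact conditional law. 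These $I-1=o(n)$ indices carry total selection weight $o(1)$, so a cruder bounded coupling there would still suffice for the second term of the Goldstein--Reinert bound, but you have not supplied one, and it feeds into the first term as well.
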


\begin{theorem}\label{CLTddescents}
Let $\lambda = (\lambda_1,\ldots, \lambda_I)$ and $\mu = (\mu_1, \ldots, \mu_J)$ be two partitions of $n$ such that $I = o(n)$. 
Let $\sigma \in S_n$ be a permutation chosen uniformly at random from a fixed double coset in $S_\lambda \bslash S_n / S_\mu$. Let $d \leq \frac{\lambda_I}{2}$ be a fixed positive integer. 
Let $W_n := \frac{\des_d(\sigma) - \mu_n}{\sigma_n}$, where $\mu_n := E(\des_d(\sigma))$ and $\sigma_n^2 := \Var(\des_d(\sigma))$. Then
\bal
d_K(W_n, Z) \leq O(n^{-1/2}),
\eal
where $Z$ is a standard normal random variable, so that $W_n \xrightarrow{d} Z$, as $n \to \infty$. 
\end{theorem}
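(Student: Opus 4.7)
The plan is to adapt the Stein's method approach used for Theorem~\ref{CLTdescents}, combining a local-dependence neighborhood construction with the standard normal-approximation bound. Write
\[
\des_d(\sigma) \;=\; \sum_{\substack{1 \le i < j \le n \\ j - i \le d}} X_{ij}, \qquad X_{ij} := \I_{\{\sigma(i) > \sigma(j)\}},
\]
so that $W_n$ is a centered, scaled sum of roughly $dn$ bounded indicators. A uniformly random $\sigma$ from the coset indexed by $T$ can be sampled in two stages: first a uniform meta-assignment, which partitions each $\lambda$-block $L_k$ into cells of sizes $T_{k,1},\ldots,T_{k,J}$, then for each $\ell$ an independent uniform bijection from the $\mu_\ell$ positions assigned to $M_\ell$ onto $M_\ell$ itself. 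This factorization of the randomness drives all the computations that follow.

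Using it, I would first compute $\mu_n$ and $\sigma_n^2$. Each $E[X_{ij}]$ depends only on $k(i), k(j)$ and the entries of $T$, so $\mu_n$ is a combinatorial sum that works out to $\Theta(n)$ under $I = o(n)$. For the variance, I would expand $\sigma_n^2$ into diagonal and covariance terms; the two-stage decomposition localizes which pairs of indicators are actually correlated, and one argues $\sigma_n^2 = \Theta(n)$. Next, for each admissible pair $(i,j)$, define a dependence neighborhood $N_{ij}$ consisting of all $(i',j')$ that either share an index with $\{i,j\}$, lie within a window of width $O(d)$ in the same or an adjacent $\lambda$-block as $i$ or $j$, or whose $\sigma$-values could land in the same $\mu$-block as $\sigma(i)$ or $\sigma(j)$. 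The hypothesis $d \leq \lambda_I/2$ enters here in an essential way: it forces every pair $(i,j)$ with $j-i \leq d$ into at most two consecutive $\lambda$-blocks, which keeps $|N_{ij}| = O(d)$ uniformly. Applying the Chen--Shao normal-approximation bound for sums with local dependence then gives
\[
d_K(W_n, Z) \;\leq\; C\,\sigma_n^{-3}\sum_{(i,j)} E|X_{ij}|^3 \cdot |N_{ij}|^2 \;+\; C\,\sigma_n^{-2}\Bigl(\sum_{(i,j)} E X_{ij}^2 \cdot |N_{ij}|\Bigr)^{1/2} \;=\; O(n^{-1/2}),
\]
since there are $O(n)$ indicators, each bounded by $1$, with neighborhoods of uniformly bounded size.

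The step I expect to be hardest is the lower bound $\sigma_n^2 \geq c\,n$. Unlike in a uniformly random permutation, the block structure of the double coset creates sizable covariances between pairs $(i,j)$ and $(i',j')$ inside the same $\lambda$-block, and one must show these covariances do not cancel the diagonal contribution. I would handle this by isolating a linear-in-$n$ subcollection of ``good'' pairs sitting inside a common $\lambda$-block for which the conditional variance of $X_{ij}$, given the meta-assignment, is bounded below by a positive constant on a positive-probability event, and then controlling the remaining covariances by the block-wise independence guaranteed by the two-stage decomposition. The condition $I = o(n)$ enters here, in ensuring the existence of enough such ``good'' pairs. Once the linear lower bound on $\sigma_n^2$ is in place, the remainder of the Stein argument is bookkeeping.
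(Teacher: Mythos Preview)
Your overall strategy---local-dependence neighborhoods plus a Stein bound, together with a variance estimate of order $n$---is exactly the paper's route; the paper applies Hofer's dependency-graph version of Stein's method (Theorem~\ref{SteinDependencyGraph}) after showing $\sigma_n^2\asymp nd$ in Proposition~\ref{ddescentmeanvariance}. So the architecture is right.

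There is, however, a genuine confusion in your description of the dependence structure that would sink the argument as written. Your third clause for $N_{ij}$---include all $(i',j')$ ``whose $\sigma$-values could land in the same $\mu$-block as $\sigma(i)$ or $\sigma(j)$''---is both unnecessary and, taken literally, fatal: almost every pair satisfies it, so $|N_{ij}|$ would be $\Theta(nd)$, not $O(d)$, and the Stein bound would not close. The key observation you are missing is that the $\mu$-block structure introduces \emph{no} dependence beyond what is already forced by the $\lambda$-block geometry. If $i',j'$ lie in a single $\lambda$-block and $\{i',j'\}\cap\{i,j\}=\emptyset$, then the transposition swapping positions $i'$ and $j'$ is a measure-preserving bijection on the coset that flips $X_{i'j'}$ and leaves every other indicator with disjoint indices unchanged; hence $X_{i'j'}$ is exactly Bernoulli$(1/2)$ independent of everything else. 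The only genuine edges are (i) shared indices, and (ii) pairs of \emph{border-crossers} whose two positions straddle the same $\lambda$-border or adjacent $\lambda$-borders (the labels of two positions in a common $\lambda$-block are correlated through the fixed row $T_{k\cdot}$). That yields neighborhood size $\asymp d^2$---not $O(d)$, since there are $\binom{d+1}{2}$ crossers at each border---but for fixed $d$ this is still $O(1)$ and your Chen--Shao computation then gives $O(n^{-1/2})$ just as the paper's Hofer bound does.

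On the variance: you flag the lower bound $\sigma_n^2\ge cn$ as the hard step and outline a conditional-variance extraction. The paper does something much more direct (Proposition~\ref{ddescentmeanvariance}): it splits $\sum\cov(Y_k,Y_\ell)$ according to whether the two pairs are within one block, in different blocks, or straddle borders. By the exchangeability above, the within-block covariances coincide with the uniform-permutation case treated by Pike and give the order-$nd$ main term; the border contributions are $O(d^4 I)=o(n)$. No probabilistic subtlety is needed---just bookkeeping.
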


\begin{theorem}\label{CLTinversions}
Let $\lambda = (\lambda_1,\ldots, \lambda_I)$ and $\mu = (\mu_1, \ldots, \mu_J)$ be two partitions of $n$ such that $I = o(n)$. 
Let $\sigma \in S_n$ be a permutation chosen uniformly at random from a fixed double coset in $S_\lambda \bslash S_n / S_\mu$.  
Let $W_n := \frac{\inv(\sigma) - \mu_n}{\sigma_n}$, where $\mu_n := E(\inv(\sigma))$ and $\sigma_n^2 := \Var(\inv(\sigma))$. Then
\bal
d_K(W_n, Z) \leq O(n^{-1/2}),
\eal
where $Z$ is a standard normal random variable, so that $W_n \xrightarrow{d} Z$, as $n \to \infty$. 
\end{theorem}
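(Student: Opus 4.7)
The key structural observation is that, for $\sigma$ uniform on the double coset indexed by $T$, the inversion count $\inv(\sigma)$ decomposes exactly as a deterministic constant plus a sum of $I + J$ independent random variables. To see this, sample $\sigma$ in two independent stages: first, for each row block $L_p$, uniformly partition $L_p$ into subsets $A_{p,1}, \ldots, A_{p,J}$ of sizes $T_{p,1}, \ldots, T_{p,J}$, which defines a coloring $c: [n] \to [J]$ with $c(i) = j$ iff $\sigma(i) \in M_j$; second, for each column block $M_j$, choose a uniform bijection $\pi_j : c^{-1}(j) \to M_j$, independently across $j$. Then $\sigma(i) = \pi_{c(i)}(i)$, and the product of counts recovers $|S_\lambda \sigma S_\mu|$.

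Splitting inversions according to whether $c(i) = c(j)$ gives $\inv(\sigma) = \sum_{i<j} \I_{c(i) > c(j)} + \sum_{j=1}^J \inv(\pi_j)$. The cross-row-block part of the first sum is the deterministic constant $D := \sum_{p < q}\sum_{r > s} T_{p,r} T_{q,s}$, since $L_p$ always holds exactly $T_{p,r}$ positions of color $r$; the within-row-block part is $\sum_{p=1}^I C_p$, with the $C_p$ independent across $p$ by construction. The $W_j := \inv(\pi_j)$ are independent across $j$ and, by independence of the two sampling steps, independent of the coloring. Hence
\bal
\inv(\sigma) = D + \sum_{p=1}^I C_p + \sum_{j=1}^J W_j
\eal
is a constant plus a sum of $I + J$ jointly independent summands.

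Each $W_j$ is the inversion count of a uniform permutation in $S_{\mu_j}$, and $(W_j - EW_j)/\sqrt{\Var W_j}$ is within $O(\mu_j^{-1/2})$ of the standard normal in Kolmogorov distance by the classical inversion CLT (proved, for instance, by Stein's method via exchangeable pairs or by size-bias coupling with the usual pair-swap). Writing $C_p = \sum_{r > s} U_{p,r,s}$ as a sum of two-sample Mann-Whitney type statistics inside $L_p$ yields an analogous Kolmogorov bound of order $O(\lambda_p^{-1/2})$. Combining these per-summand bounds via the Berry-Esseen theorem for independent non-identically distributed sums---or, equivalently, via the subadditivity $d_K(X+Y, N_X + N_Y) \leq d_K(X, N_X) + d_K(Y, N_Y)$ for independent summands and independent approximating normals---and using the hypothesis $I = o(n)$, which forces at least one row block to be macroscopic and contribute the dominant variance, yields $d_K(W_n, Z) \leq O(n^{-1/2})$.

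\textbf{Main obstacle.} The principal technical difficulty is producing a sharp lower bound on $\sigma_n^2 = \Var(\inv(\sigma))$ uniformly over admissible $T$, and verifying that the aggregated per-summand Kolmogorov errors remain within the claimed $O(n^{-1/2})$ budget. Degenerate situations---rows of $T$ concentrated in one column (killing the corresponding $C_p$), or parts $\mu_j = 1$ (killing $W_j$)---require a careful case analysis to identify which summand dominates $\sigma_n^2$ in each regime, and to confirm that its internal Kolmogorov error still reduces to the $n^{-1/2}$ rate after standardization.
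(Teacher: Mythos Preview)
Your decomposition is correct and attractive: the two–stage sampling of the double coset really does give
\[
\inv(\sigma)=D+\sum_{p=1}^{I}C_p+\sum_{j=1}^{J}W_j
\]
with $D$ deterministic, the $C_p$ (within–row ``color inversions'') and the $W_j$ (inversions of uniform $S_{\mu_j}$) jointly independent. This is a genuinely different route from the paper, which instead builds a dependency graph on the $\binom{n}{2}$ indicators $J_{ij}$ with maximal degree $D=O(n)$ and feeds $N=\binom{n}{2}$, $D=O(n)$, $\sigma_n^2\asymp n^3$ into Hofer's Stein bound.

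The gap is in your combination step. The subadditivity
\[
d_K(X+Y,\;N_X+N_Y)\le d_K(X,N_X)+d_K(Y,N_Y)
\]
for independent summands is valid, but it is lossy: it gives at best
\[
d_K(W_n,Z)\le \sum_{p} O(\lambda_p^{-1/2})+\sum_{j} O(\mu_j^{-1/2}),
\]
and nothing in the hypotheses controls $J$. If, say, all $\mu_j$ are bounded (e.g.\ $\mu_j\le 2$), the second sum is of order $J\asymp n$, not $O(n^{-1/2})$; even on the $\lambda$ side, equal parts $\lambda_p=n/I$ give $\sum_p\lambda_p^{-1/2}=I^{3/2}n^{-1/2}$, which need not vanish under $I=o(n)$. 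Your alternative, applying Berry--Esseen directly to the $I+J$ macroscopic summands, fares no better: each $W_j$ has range $O(\mu_j^{2})$ and variance $O(\mu_j^{3})$, so the Lyapunov ratio $\sum E|W_j-EW_j|^3/\sigma_n^3$ is far from $n^{-1/2}$ (already for $\lambda=\mu=(n)$ it is $\Theta(n^{1/2})$). The phrase ``at least one row block is macroscopic and contributes the dominant variance'' does not rescue this: a single good summand does not offset the accumulated Kolmogorov error from the many small ones under subadditivity.

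Your decomposition can be salvaged, but not at the level of $I+J$ blocks. Push one level deeper: write each $W_j=\sum_{k=1}^{\mu_j}U_{j,k}$ with $U_{j,k}$ independent uniform on $\{0,\dots,k-1\}$ (Lehmer code), and use the $q$-multinomial factorisation to decompose each $C_p$ similarly into independent bounded pieces. Then a single application of Berry--Esseen to the full collection of $O(n)$ independent bounded summands gives the $O(n^{-1/2})$ rate --- and, incidentally, a cleaner variance lower bound than the paper's. The paper's dependency-graph argument buys you exactly this: it works uniformly in $J$ and in the profile of $T$ because it never aggregates per-block errors.
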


The proofs use dependency graphs and Stein's method. Stein's method via dependency graphs is a powerful technique which reduces proving central limit theorems to a construction of a dependency graph and a variance estimate. 
This method has been successfully used to prove central limit theorems; some examples include 
Avram and Bertsimas \cite{AB93} for the dependence range of several geometric probability models where random points are generated according to a Poisson point process, 
Barany and Vu \cite{BV07} for the volume and number of faces of a Gaussian random polytope, and 
Hofer \cite{Hof18} for the number of occurrences of a fixed vincular permutation pattern in a random permutation. 

The probabilistic study of descents and inversions of random permutations is a rich and expansive topic. 
We do not attempt to cover the entire history, but we mention some previous work. 
There are many proofs for the asymptotic normality of descents, but we highlight Fulman's proof using Stein's method via exchangeable pairs and a non-reversible Markov chain in \cite{Ful04}. 
Bona \cite{Bon08} used the method of dependency graphs to show the asymptotic normality of generalized descents, and subsequently Pike \cite{Pik11} used Stein's method and exchangeable pairs to obtain rates of convergence. 
Chatterjee and Diaconis \cite{CD17} proved a central limit theorem for the two sided descent statistic in uniformly random permutations, and 
He \cite{He22} extended this result to Mallows distributed permutations. 
In another direction, Conger and Viswanath \cite{CV07} established the asymptotic normality of the number of descents and inversions in permutations of mulitsets. 

\subsection{Outline}

The paper is organized as follows. 
Section \ref{Preliminaries} introduces notation and definitions that we use throughout the paper, and gives necessary background and relevant results on size-bias coupling, dependency graphs, Stein's method, and concentration inequalities. 

In Section \ref{FixedPoints} we use size-bias coupling and Stein's method to prove Theorem \ref{PoissonLimitTheorem}, 
and in Sections \ref{Descents}, \ref{GeneralizedDescents}, and \ref{Inversions} we construct dependency graphs for descents and inversions, and apply Stein's method to prove Theorems \ref{CLTdescents}, \ref{CLTddescents}, and \ref{CLTinversions}. 

We then use our size-bias coupling and dependency graph constructions to prove concentration of measure results in Section \ref{Concentration}. 

We conclude with some final remarks in Section \ref{FinalRemarks}.


\section{Preliminaries} \label{Preliminaries}

\subsection{Notation and Definitions} \label{NotationDefinitions}

For a positive integer $n$, let $[n] := \{1,\ldots,n\}$. Let $S_n$ be the symmetric group on $[n]$. 

As an abuse of notation, we use $\sigma$ to denote a permutation, while $\sigma_n^2$ denotes the variance of some random variable, which may depend on $n$. 

Let $a_n, b_n$ be two sequences. If $\lim_{n \to \infty} \frac{a_n}{b_n}$ is a nonzero constant, then we write $a_n \asymp b_n$ and we say that $a_n$ is of the same {\em order} as $b_n$. 
If there exists positive constants $c$ and $n_0$ such that $a_n \leq cb_n$ for all $n \geq n_0$, then we write $a_n = O(b_n)$. 

Let $\lambda = (\lambda_1, \ldots, \lambda_N)$ be a partition of $n$. 
Recall from the introduction that the {\em $\lambda_i$-block} is the set $L_i := \{\lambda_1 + \dotsb + \lambda_{i-1}+1, \ldots, \lambda_1 + \dotsb + \lambda_i\}$. 
Define the {\em $\lambda_i$-border} be the value $l_i^b := \lambda_1 + \dotsb + \lambda_i$. 
Then the {\em $\lambda_i$-interior} is defined to be the set $L_i^o := L_i \setminus \{l_i^b\}$, that is, the $\lambda_i$-block minus the $\lambda_i$-border. 
Also recall from the introduction the sets $A_{k\ell} := L_k \cap M_\ell \subseteq [n]$ for all $1 \leq k \leq I$ and $1 \leq \ell \leq J$, and note that these sets $\{ A_{k\ell} \}$ form a disjoint partition of $[n]$. 

\subsection{Size-Bias Coupling}

Let $W$ be a non-negative integer-valued random variable with finite mean. Then $W^s$ has the {\em size-bias distribution} of $W$ if
\bal
P(W^s = w) = \frac{wP(W = w)}{EW}.
\eal

A {\em size-bias coupling} is a pair $(W, W^s)$ of random variables defined on the same probability space such that $W^s$ has the size-bias distribution of $W$. 

We use the following outline provided in \cite{Ros11} for coupling a random variable $W$ with its size-bias version $W^s$. 

Let $W = \sum_{i=1}^n X_i$ where $X_i \geq 0$ and $p_i = EX_i$. 
\begin{enumerate}
\item For each $i \in [n]$, let $X_i^s$ have the size-bias distribution of $X_i$ independent of $(X_k)_{k \neq i}$ and $(X_k^s)_{k \neq i}$. 
Given $X_i^s = x$, define the vector $(X_k^{(i)})_{k \neq i}$ to have the distribution of $(X_k)_{k \neq i}$ conditional on $X_i = x$. 
\item Choose a random summand $X_I$, where the index $I$ is chosen, independent of all else, with probability $P(I = i) = p_i/\lambda$, where $\lambda = EW$. 
\item Define $W^s = \sum_{k \neq I} X_k^{(I)} + X_I^s$. 
\end{enumerate}

If $W^s$ is constructed by Items (1)-(3) above, then $W^s$ has the size-bias distribution of $W$. As a special case, note that if the $X_i$ is an indicator random variable, then its size-bias distribution is $X_i^s = 1$. 
We summarize this as the following proposition. 

\begin{proposition}[\cite{Ros11}, Corollary 4.14] \label{sizebiascouplingconstruction}
Let $X_1,\ldots, X_n$ be indicator variables with $P(X_i = 1) = p_i$, $W = \sum_{i=1}^n X_i$, and $\lambda = EW = \sum_{i=1}^n p_i$. 
If for each $i \in [n]$, $(X_k^{(i)})_{k \neq i}$ has the distribution of $(X_k)_{k \neq i}$ conditional on $X_i = 1$ and $I$ is a random variable independent of all else such that $P(I = i) = p_i/\lambda$, 
then $W^s = \sum_{k \neq I} X_k^{(I)} + 1$ has the size-bias distribution of $W$. 
\end{proposition}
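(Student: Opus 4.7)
The plan is to verify the defining characterization of size-biasing directly: a nonnegative integer-valued random variable $V$ with finite mean has the size-bias distribution of $W$ if and only if $E[f(V)] = E[Wf(W)]/EW$ for every bounded measurable $f$. So I will compute $E[f(W^s)]$ and show it equals $E[Wf(W)]/\lambda$. This avoids having to chase the pointwise mass function $P(W^s=w)$ and handles all values simultaneously.

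First, condition on the randomly chosen index $I$. By hypothesis $I$ is independent of the vectors $(X_k^{(i)})_{k\neq i}$ and satisfies $P(I=i)=p_i/\lambda$, so
\bal
E[f(W^s)] = \sum_{i=1}^n P(I=i)\, E\!\left[f\!\left(1 + \sum_{k\neq i} X_k^{(i)}\right)\right] = \frac{1}{\lambda}\sum_{i=1}^n p_i\, E\!\left[f\!\left(1 + \sum_{k\neq i} X_k^{(i)}\right)\right].
\eal
Next, invoke the defining property of $(X_k^{(i)})_{k\neq i}$, namely that its law is that of $(X_k)_{k\neq i}$ conditioned on $X_i=1$. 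On the event $\{X_i=1\}$ one has $1+\sum_{k\neq i} X_k = X_i + \sum_{k\neq i} X_k = W$, so each inner expectation rewrites cleanly as $E[f(W)\mid X_i=1]$.

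Finally, I exploit the indicator hypothesis. Because $X_i\in\{0,1\}$, the random variable $X_i f(W)$ vanishes on $\{X_i=0\}$, giving $E[X_i f(W)] = p_i\,E[f(W)\mid X_i=1]$. Substituting and using linearity,
\bal
E[f(W^s)] = \frac{1}{\lambda}\sum_{i=1}^n E[X_i f(W)] = \frac{E[Wf(W)]}{\lambda},
\eal
which is the size-bias identity.

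There is no real obstacle: the proof is a three-line calculation. The only step that uses the Bernoulli assumption is the last one, where ``$X_i\in\{0,1\}$'' collapses $E[X_i f(W)]$ to a conditional expectation times $p_i$. In the general (nonnegative) case one would have to replace the ``$+1$'' in the definition of $W^s$ by an independent draw $X_I^s$ from the size-bias law of $X_I$, which is precisely how the stated corollary specializes the general construction of items (1)--(3).
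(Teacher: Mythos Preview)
Your proof is correct. Note, however, that the paper does not supply its own proof of this proposition: it is quoted as Corollary~4.14 of \cite{Ros11}, with the paper only remarking that the general size-bias construction in items (1)--(3) specializes to the indicator case via $X_i^s=1$. Your argument is therefore not a comparison target but a self-contained verification, and it is exactly the standard one: check the functional characterization $E[f(W^s)]=E[Wf(W)]/\lambda$ by conditioning on $I$, rewriting the conditional law of $(X_k^{(i)})_{k\neq i}$ as $E[f(W)\mid X_i=1]$, and using $p_i\,E[f(W)\mid X_i=1]=E[X_i f(W)]$ for Bernoulli $X_i$. Every step is valid, and your closing remark correctly identifies where the indicator hypothesis enters and how the general construction differs.
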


\subsection{Stein's method}

Stein's method is a powerful technique introduced by Charles Stein which is used to bound the distance between two probability distributions. 
It has been developed for many target distributions and successfully applied to establish central limit theorems in a wide range of settings. 
The main advantage of using Stein's method is that it provides an explicit error bound on the distributional approximation. We refer the reader to Ross \cite{Ros11} for an accessible survey of Stein's method. 

We use the following size-bias coupling version of Stein's method for Poisson approximation. 

\begin{theorem}[\cite{Ros11}, Theorem 4.13] \label{sizebiasStein}
Let $W \geq 0$ be an integer-valued random variable such that $EW = \lambda > 0$ and let $W^s$ be a size-bias coupling of $W$. If $Z \sim \Poi(\lambda)$, then 
\bal
d_{TV}(W,Z) \leq \min\{1,\lambda\} E|W + 1 - W^s|. 
\eal
\end{theorem}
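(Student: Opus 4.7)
The plan is to prove the size-bias Stein bound via the standard Stein-Chen framework for Poisson approximation. The starting point is the Stein characterization of the Poisson distribution: a nonnegative integer-valued $Z$ is $\Poi(\lambda)$ if and only if $E[\lambda f(Z+1) - Z f(Z)] = 0$ for every bounded $f:\Z_{\geq 0}\to\R$. Accordingly, for each $A\subseteq \Z_{\geq 0}$, I would introduce the Stein equation
\bal
\lambda f(w+1) - w f(w) = \I_A(w) - P(Z\in A),
\eal
and let $f_A$ be the unique solution with $f_A(0)=0$. Taking expectations on both sides with $W$ in place of $w$ and passing to the supremum over $A$ gives the standard representation
\bal
d_{TV}(W,Z) = \sup_{A\subseteq \Z_{\geq 0}} \bigl| E\bigl[\lambda f_A(W+1) - W f_A(W)\bigr] \bigr|.
\eal

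Next I would exploit the defining property of the size-bias coupling. From $P(W^s=w) = wP(W=w)/\lambda$, one immediately obtains the identity $E[Wg(W)] = \lambda E[g(W^s)]$ for every bounded $g$. Applying this with $g = f_A$ rewrites the right-hand side of the Stein identity as
\bal
E\bigl[\lambda f_A(W+1) - W f_A(W)\bigr] = \lambda E\bigl[f_A(W+1) - f_A(W^s)\bigr].
\eal
Since $W+1$ and $W^s$ are both integer-valued, telescoping along the integers between them yields
\bal
\bigl|f_A(W+1) - f_A(W^s)\bigr| \leq \|\Delta f_A\|_\infty \, |W+1 - W^s|,
\eal
where $\|\Delta f_A\|_\infty := \sup_{k\geq 0} |f_A(k+1)-f_A(k)|$.

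The remaining and genuinely analytic step is the Barbour-Eagleson bound on the Stein solution, namely $\|\Delta f_A\|_\infty \leq \min\{1,\lambda^{-1}\}(1-e^{-\lambda}) \leq \min\{1,\lambda^{-1}\}$, uniformly in $A$. I would obtain this by writing $f_A$ explicitly from the recursion: iterating the Stein equation gives $f_A(k+1) = \frac{(k!)}{\lambda^{k+1}}e^{\lambda}\bigl[P(Z\in A\cap [0,k])\,P(Z>k) - P(Z\in A\cap (k,\infty))\,P(Z\leq k)\bigr]$ (after absorbing $f_A(0)=0$), and then estimating the forward difference $f_A(k+1)-f_A(k)$ singleton-by-singleton, i.e.\ reducing to the case $A=\{j\}$ and summing. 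This is the main obstacle, since it requires a careful monotonicity/cancellation argument to show the worst case is attained at a singleton and to extract the factor $(1-e^{-\lambda})/\lambda$.

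Having these ingredients, the conclusion is immediate: combining the identity with the two estimates yields
\bal
\bigl|E[\lambda f_A(W+1) - W f_A(W)]\bigr| \leq \lambda \cdot \min\{1,\lambda^{-1}\} \cdot E|W+1 - W^s| = \min\{1,\lambda\}\, E|W+1-W^s|,
\eal
and taking the supremum over $A$ gives the stated total variation bound. The size-bias identity and the telescoping are essentially bookkeeping; the real work is hidden in the uniform-in-$A$ bound on $\|\Delta f_A\|_\infty$, which is precisely where the Poisson-specific structure enters.
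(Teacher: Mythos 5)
The paper does not prove this statement; it is imported verbatim as Theorem 4.13 of the cited survey of Ross, so there is no internal proof to compare against. Your argument is correct and is precisely the standard Stein--Chen proof given in that reference: the Stein equation for $\Poi(\lambda)$, the size-bias identity $E[Wg(W)]=\lambda E[g(W^s)]$, the telescoping bound $|f_A(W+1)-f_A(W^s)|\leq \|\Delta f_A\|_\infty |W+1-W^s|$, and the Barbour--Eagleson estimate $\|\Delta f_A\|_\infty\leq \lambda^{-1}(1-e^{-\lambda})\leq\min\{1,\lambda^{-1}\}$, with the explicit formula you give for $f_A(k+1)$ being the correct closed form of the recursion.
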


\subsection{Dependency Graphs}

Let $\{Y_\alpha : \alpha \in A\}$ be a family of random variables. Then a graph $G$ is a {\em dependency graph} for the family $\{Y_\alpha : \alpha \in A\}$ if the following two conditions hold:
\begin{enumerate}
\item The vertex set of $G$ is $A$. 
\item If $A_1$ and $A_2$ are disconnected subsets in $G$, then $\{Y_\alpha : \alpha \in A_1\}$ and $\{Y_\alpha : \alpha \in A_2\}$ are independent.
\end{enumerate} 

Dependency graphs are useful for proving central limit theorems for sums of random variables without too many pairwise dependencies. 
Janson provided an asymptotic normality criterion in \cite{Jan88}, and Baldi and Rinott \cite{BR89} used Stein's method to obtain an upper bound on the Kolmogorov distance to the standard normal in terms of the dependency graph structure. 

We use the following stronger version of Stein's method via dependency graphs for normal approximation due to Hofer.  

\begin{theorem}[\cite{Hof18}, Theorem 3.5]\label{SteinDependencyGraph}
Let $Z \sim N(0,1)$. Let $G$ be a dependency graph for $\{Y_k\}_{k=1}^N$ and $D-1$ be the maximal degree of $G$. Let $X = \sum_{k=1}^N Y_k$. 
Assume there is a constant $B > 0$ such that $|Y_k - E(Y_k)| \leq B$ for all $k$. Then for $W = \frac{X - \mu}{\sigma}$, where $\mu = EX$ and $\sigma^2 = \Var(X)$, it holds that
\bal
d_K(W, Z) \leq \frac{8B^2D^{3/2}N^{1/2}}{\sigma^2} + \frac{8B^3D^2N}{\sigma^3} . 
\eal
\end{theorem}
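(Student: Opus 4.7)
My plan is to apply Stein's method for normal approximation with test functions adapted to Kolmogorov distance, and to exploit the dependency graph structure through a neighborhood decomposition of $W$. First I would normalize by setting $\tilde Y_k := (Y_k - EY_k)/\sigma$, so that $W = \sum_{k=1}^N \tilde Y_k$ has mean zero, variance one, and $|\tilde Y_k| \le B/\sigma$. For each $x \in \R$, let $f = f_x$ be the bounded solution of the Stein equation $f'(w) - w f(w) = \I_{\{w \le x\}} - \Phi(x)$; standard bounds give $\|f\|_\infty \le \sqrt{2\pi}/4$ and $\|f'\|_\infty \le 1$, with $f'$ continuous except for a single jump of size one at $w = x$ and Lipschitz on each of the complementary half-lines. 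Then $d_K(W, Z) = \sup_x |E[f'_x(W) - W f_x(W)]|$, so the task reduces to bounding this expectation uniformly in $x$.

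Next, for each $k$ define $N_k := \{k\} \cup \{j : jk \in E(G)\}$, so $|N_k| \le D$, and set $\eta_k := \sum_{j \in N_k} \tilde Y_j$ and $V_k := W - \eta_k$. The dependency graph property ensures $\tilde Y_k$ is independent of $V_k$, so $E[\tilde Y_k f(V_k)] = 0$. Using $f(W) - f(V_k) = \int_0^{\eta_k} f'(V_k + t)\,dt$, I would write
\bal
E[W f(W)] = \sum_k E[\tilde Y_k (f(W) - f(V_k))] = \sum_k E[\tilde Y_k \eta_k f'(W)] + R,
\eal
where $R := \sum_k E\bigl[\tilde Y_k \int_0^{\eta_k}(f'(V_k+t)-f'(W))\,dt\bigr]$. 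Because covariances vanish outside neighborhoods, $E[\sum_k \tilde Y_k \eta_k] = \sum_k \sum_{j \in N_k}\cov(\tilde Y_k, \tilde Y_j) = \Var(W) = 1$, which rearranges the Stein expression into
\bal
E[f'(W) - W f(W)] = E\bigl[f'(W)\bigl(1 - {\textstyle\sum_k \tilde Y_k \eta_k}\bigr)\bigr] - R.
\eal

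For the first term on the right, I would apply $\|f'\|_\infty \le 1$ together with Cauchy--Schwarz to bound it by $\sqrt{\Var(\sum_k \tilde Y_k \eta_k)}$. Expanding the variance as a sum of covariances between products $\tilde Y_k \tilde Y_j$ and $\tilde Y_{k'} \tilde Y_{j'}$, non-vanishing contributions require the quadruple $\{k,j,k',j'\}$ to lie within a graph-distance-two neighborhood; a routine count using $|N_k| \le D$ gives at most $O(ND^3)$ such quadruples, each with covariance at most $(B/\sigma)^4$, which after taking square roots produces the first claimed term of order $B^2 D^{3/2} N^{1/2}/\sigma^2$. For the remainder $R$, I would use the elementary dichotomy $|f'_x(a) - f'_x(b)| \le |a-b| + \I_{\{x \in [a \wedge b, a \vee b]\}}$: the Lipschitz part integrates to at most $\sum_k E[|\tilde Y_k|\, \eta_k^2]$, and the jump part is bounded by the same quantity since the indicator contributes at most $|\eta_k|$ inside the $t$-integral. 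With $|\tilde Y_k| \le B/\sigma$ and $|\eta_k| \le DB/\sigma$, both are of order $N D^2 B^3 / \sigma^3$, producing the second claimed term; carefully tracking each application of Cauchy--Schwarz, Jensen, and the crude estimate $\|f'\|_\infty \le 1$ yields the overall factor of $8$ in Hofer's formulation.

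The main obstacle is handling the non-smoothness of $f_x$ imposed by the Kolmogorov metric. For the Wasserstein distance one has $\|f''_x\|_\infty \le 2$ and a direct second-order Taylor expansion produces the entire bound, but here $f'_x$ carries a unit jump at the test point $x$, so $R$ must be split into a Lipschitz piece and a jump piece, and the jump piece is only controllable via the crude estimate $\int_0^{\eta_k}\I_{\{x \in [\cdots]\}}\,dt \le |\eta_k|$; this loss of one factor of a Lipschitz gradient is precisely what forces the second term to scale as $B^3 D^2 N/\sigma^3$ rather than the $B^2 D^{3/2} N^{1/2}/\sigma^2$ rate of the first. A secondary technical ingredient is the $O(ND^3)$ combinatorial count of interacting quadruples in the variance expansion, which relies essentially on the maximal-degree hypothesis $|N_k|\le D$ and fails without a quantitative bound on the degree.
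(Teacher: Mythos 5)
This statement is quoted verbatim from Hofer \cite{Hof18} (Theorem 3.5) and the paper gives no proof of it, so there is no internal argument to compare against; I am therefore assessing your proposal on its own terms. The overall architecture is the right one (Stein equation for the Kolmogorov metric, neighborhood decomposition $W = V_k + \eta_k$ with $\tilde Y_k$ independent of $V_k$, the identity $E\bigl[\sum_k \tilde Y_k \eta_k\bigr] = 1$, and a Cauchy--Schwarz/quadruple-counting bound for the first term), and your $O(ND^3)$ count of interacting quadruples is correct and does deliver the term $B^2D^{3/2}N^{1/2}/\sigma^2$.

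The genuine gap is in your treatment of the jump part of the remainder $R$. Bounding $\int_0^{\eta_k} \I_{\{x \in [\,\cdot\,]\}}\,dt$ by $|\eta_k|$ and then using $|\tilde Y_k| \le B/\sigma$, $|\eta_k| \le DB/\sigma$ gives $\sum_k E[|\tilde Y_k|\,|\eta_k|] \le NDB^2/\sigma^2$, \emph{not} $ND^2B^3/\sigma^3$ as you assert --- you have lost one factor of $|\eta_k|$ relative to the Lipschitz part $\sum_k E[|\tilde Y_k|\,\eta_k^2]$. Worse, this bound is vacuous: since $\sigma^2 = \sum_k \sum_{j \in N_k} \cov(Y_k, Y_j) \le NDB^2$, one always has $NDB^2/\sigma^2 \ge 1$, so the crude estimate can never produce a Kolmogorov bound tending to zero. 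This is exactly the known obstruction to Kolmogorov-rate Stein bounds under local dependence, and overcoming it is the real content of the theorem: one must retain the indicator as $\I\{x - |\eta_k| \le W \le x + |\eta_k|\}$ and control $P(x - \delta \le W \le x + \delta)$ via a concentration inequality (Chen--Shao style), a smoothing/induction argument \`a la Bolthausen, or the device used by Rinott (1994), whose theorem --- of which Hofer's is a variant --- carries an extra term of order $DB/\sigma$ precisely to absorb this contribution. A secondary, more minor issue is that the regularity estimate $|f_x'(a) - f_x'(b)| \le |a - b| + \I\{x \in [a \wedge b, a \vee b]\}$ is not quite the correct one; the standard bound carries an additional factor involving $|w|$ in the Lipschitz part, which must be handled by taking expectations against $E|W| \le 1$.
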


\subsection{Concentration Inequalities}

The techniques developed for distributional approximation via Stein's method can also be used to prove concentration of measure inequalities (large deviations). These provide information on the tail behavior of distributions. 

The following theorem due to Ghosh and Goldstein gives concentration inequalities for the upper and lower tails, given a bounded size-bias coupling. 

\begin{theorem}[\cite{GG11}, Theorem 1.1]\label{sizebiasconcentration}
Let $Y$ be a nonnegative random variable with mean $\mu$ and variance $\sigma^2$, both finite and positive. 
Suppose there exists a coupling of $Y$ to a variable $Y^s$ having the $Y$-size-bias distribution which satisfies $|Y^s - Y| \leq C$ for some $C > 0$ with probability one. 
\begin{enumerate}
\item If $Y^s \geq Y$ with probability one, then
\bal
P\left( \frac{Y - \mu}{\sigma} \leq -t \right) \leq \exp\left( -\frac{t^2}{2A} \right)
\eal
for all $t > 0$, where $A = C\mu/\sigma^2$. 
\item If the moment generating function $m(\theta) = E\left(e^{\theta Y}\right)$ is finite at $\theta = 2/C$, then 
\bal
P\left( \frac{Y-\mu}{\sigma} \geq t \right) \leq \exp\left( -\frac{t^2}{2(A + Bt)} \right)
\eal
for all $t > 0$, where $A = C\mu/\sigma^2$ and $B = C/2\sigma$. 
\end{enumerate}
\end{theorem}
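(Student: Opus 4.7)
The plan is to convert the coupling hypothesis $|Y^s - Y| \leq C$ into moment generating function bounds via the size-bias identity, then apply Chernoff bounds and optimize. The size-bias characterization $P(Y^s = w) = w P(Y = w)/\mu$ is equivalent to $E(Y f(Y)) = \mu E(f(Y^s))$ for integrable $f$. Taking $f(y) = e^{\theta y}$ and writing $m(\theta) = E(e^{\theta Y})$, this yields the fundamental identity
\bal
m'(\theta) = E(Y e^{\theta Y}) = \mu E(e^{\theta Y^s}) = \mu E\!\left( e^{\theta Y} e^{\theta(Y^s - Y)} \right),
\eal
which turns pointwise bounds on $Y^s - Y$ directly into a differential inequality for $m$.

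For the upper tail, $Y^s - Y \leq C$ combined with $\theta \geq 0$ gives $m'(\theta) \leq \mu e^{\theta C} m(\theta)$, i.e.\ $(\log m)'(\theta) \leq \mu e^{\theta C}$. Integrating from $0$ yields
\bal
m(\theta) \leq \exp\!\left( \frac{\mu}{C}(e^{\theta C} - 1) \right)
\eal
on $[0, 2/C]$, where the hypothesis $m(2/C) < \infty$ guarantees finiteness and the required differentiation. Combining with the Chernoff bound $P(Y \geq \mu + s) \leq e^{-\theta(\mu + s)} m(\theta)$ and optimizing over $\theta \in [0, 2/C]$, the interior minimizer $\theta^* = C^{-1} \log(1 + s/\mu)$ produces the exponent $-C^{-1}\big[(\mu+s)\log(1+s/\mu) - s\big]$. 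A convexity comparison then shows this is at most $-s^2/\bigl(C(2\mu + s)\bigr)$: letting $h(s) = (\mu+s)\log(1+s/\mu) - s$ and $k(s) = s^2/(2\mu+s)$, one verifies $h(0) = k(0) = h'(0) = k'(0) = 0$ and $h''(s) = 1/(\mu+s) \geq 8\mu^2/(2\mu+s)^3 = k''(s)$ (the latter reducing to the identity $4\mu^2 s + 6\mu s^2 + s^3 \geq 0$). Substituting $s = t\sigma$ recovers the stated $-t^2/(2(A+Bt))$.

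For the lower tail, the extra hypothesis $Y^s \geq Y$ together with $|Y^s - Y| \leq C$ gives $Y^s - Y \in [0, C]$, so for $\theta \leq 0$ we have $\theta(Y^s - Y) \in [\theta C, 0]$ and hence $m'(\theta) \geq \mu e^{\theta C} m(\theta)$. Integrating up to $0$ yields the same mgf bound $m(\theta) \leq \exp\!\left(\frac{\mu}{C}(e^{\theta C} - 1)\right)$ for $\theta \leq 0$, with no further finiteness assumption since $Y \geq 0$ forces $m(\theta) < \infty$ on $(-\infty, 0]$. Optimizing the Chernoff bound at $\eta^* = C^{-1}\log\bigl(1/(1 - s/\mu)\bigr)$ (with $\eta = -\theta$) gives exponent $-C^{-1}[(\mu-s)\log(1-s/\mu) + s]$, and an analogous but simpler convexity comparison, using $g''(s) = 1/(\mu-s) \geq 1/\mu$, upgrades it to $-s^2/(2C\mu) = -t^2/(2A)$.

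The main technical obstacle is the convexity step that converts the implicit optimal Chernoff exponent into the explicit Bernstein-shaped form stated in the theorem; once the mgf differential inequalities are in hand, everything else is essentially bookkeeping with the size-bias identity and the Chernoff inequality.
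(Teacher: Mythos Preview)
The paper does not prove this statement: it is quoted as Theorem~1.1 of Ghosh--Goldstein \cite{GG11} and used as a black box in Section~\ref{Concentration}. Your argument is exactly the Ghosh--Goldstein proof---the size-bias identity $m'(\theta)=\mu\,E(e^{\theta Y^s})$, the coupling bound turned into a differential inequality for $\log m$, integration to the Poisson-type mgf bound $m(\theta)\le\exp\bigl(\tfrac{\mu}{C}(e^{\theta C}-1)\bigr)$, Chernoff, and the convexity comparison to reach the Bernstein form---and it is correct.

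One small point you gloss over: in the upper-tail optimization you restrict to $\theta\in[0,2/C]$ but then invoke the \emph{interior} minimizer $\theta^\ast=C^{-1}\log(1+s/\mu)$. This lies in the interval only when $s\le(e^2-1)\mu$; for larger $s$ the constrained minimum is attained at the endpoint $\theta=2/C$, giving exponent $-\tfrac{2(\mu+s)}{C}+\tfrac{\mu(e^2-1)}{C}$, and one must check separately that this still dominates $-s^2/\bigl(C(2\mu+s)\bigr)$. It does (for $s>(e^2-1)\mu$ one has $2s-\mu(e^2-3)>s>s^2/(2\mu+s)$), so the gap is purely bookkeeping, but it should be acknowledged.
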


The next theorem established by Janson provides concentration inequalities for sums of dependent random variables with an appropriate dependency graph structure. 

\begin{theorem}[\cite{Jan04}, Theorem 2.1]\label{dependencyconcentration}
Suppose $X = \sum_{\alpha \in A} Y_\alpha$ with $a_\alpha \leq Y_\alpha \leq b_\alpha$ for all $\alpha \in A$ and some real numbers $a_\alpha$ and $b_\alpha$. 
Let $G$ be a dependency graph for $\{Y_\alpha : \alpha \in A\}$ and let $D-1$ be the maximal degree of $G$. Then for $t > 0$, 
\bal
P\left( X - EX \geq t \right) \leq \exp\left( -\frac{2t^2}{D\sum_{\alpha \in A} (b_\alpha - a_\alpha)^2} \right).
\eal
The same estimate holds for $P(X - EX \leq -t)$. 
\end{theorem}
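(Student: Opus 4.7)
The plan is to use the standard chromatic-number argument combined with the classical Hoeffding moment generating function bound. First I would apply greedy coloring to the dependency graph $G$: since the maximum degree is $D-1$, its chromatic number $\chi$ satisfies $\chi \leq D$. Fix a proper $\chi$-coloring with color classes $A_1, \ldots, A_\chi$. By the definition of dependency graph, each color class $A_i$ is an independent set in $G$, so the family $\{Y_\alpha : \alpha \in A_i\}$ consists of mutually independent random variables. Writing $X_i = \sum_{\alpha \in A_i} Y_\alpha$, we have $X = \sum_{i=1}^\chi X_i$, with each $X_i$ a sum of independent bounded variables.

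Next I would bound the moment generating function of $X - EX$ by a product of those of the individual $X_i$, using the generalized H\"older inequality with all $\chi$ exponents equal:
\[
E\bigl[e^{\theta(X - EX)}\bigr] = E\!\left[\prod_{i=1}^{\chi} e^{\theta(X_i - EX_i)}\right] \leq \prod_{i=1}^{\chi} \Bigl(E\bigl[e^{\chi\theta(X_i - EX_i)}\bigr]\Bigr)^{1/\chi}.
\]
For each $i$, since the summands within $X_i$ are independent and satisfy $a_\alpha \leq Y_\alpha \leq b_\alpha$, the classical Hoeffding MGF bound for sums of independent bounded variables yields
\[
E\bigl[e^{\chi\theta(X_i - EX_i)}\bigr] \leq \exp\!\left(\frac{\chi^2 \theta^2}{8} \sum_{\alpha \in A_i} (b_\alpha - a_\alpha)^2\right).
\]
Multiplying over $i$ and writing $S := \sum_{\alpha \in A}(b_\alpha - a_\alpha)^2$ gives $E[e^{\theta(X - EX)}] \leq \exp(\chi \theta^2 S/8)$.

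Finally I would apply the Chernoff bound $P(X - EX \geq t) \leq e^{-\theta t} E[e^{\theta(X - EX)}]$ and optimize in $\theta > 0$. Choosing $\theta = 4t/(\chi S)$ produces $P(X - EX \geq t) \leq \exp(-2t^2/(\chi S))$, and the inequality $\chi \leq D$ then gives the claimed upper-tail estimate. The lower tail follows verbatim after replacing each $Y_\alpha$ by $-Y_\alpha$, whose dependency graph is still $G$. The main obstacle I see is the H\"older step: decoupling the non-independent $X_i$ at the level of MGFs forces the multiplicative factor of $\chi$ inside the exponent, and this factor is exactly what ultimately inserts the $D$ into the denominator of the bound. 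One could replace the H\"older inequality by a Jensen-type convexity argument, writing $\theta(X - EX) = \chi^{-1} \sum_i \chi\theta(X_i - EX_i)$ and applying Jensen to $\exp$; either way, it is this color-class decoupling, rather than the Hoeffding MGF estimate or the Chernoff optimization, that is the heart of the proof.
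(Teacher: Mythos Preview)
The paper does not prove this theorem; it is quoted verbatim as Theorem~2.1 of Janson~\cite{Jan04} and used as a black box in Section~\ref{Concentration}. There is therefore no ``paper's own proof'' to compare your proposal against.

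That said, your sketch is essentially Janson's original argument and is correct. One point worth tightening: you assert that the variables within a color class are mutually independent because the class is an independent set in $G$. Under the dependency-graph definition used here (disjoint vertex sets with no edges between them yield independent families), this does follow, but it requires a short inductive step: for any $\alpha$ in the class and any finite subset $S$ of the class not containing $\alpha$, the sets $\{\alpha\}$ and $S$ are disconnected, so $Y_\alpha$ is independent of $\{Y_\beta:\beta\in S\}$, which yields full mutual independence. With that clarified, the chain H\"older $\to$ Hoeffding MGF bound $\to$ Chernoff optimization goes through exactly as you wrote, and the replacement $Y_\alpha\mapsto -Y_\alpha$ handles the lower tail.
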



\section{Fixed Points} \label{FixedPoints}

\subsection{Mean and Variance of Fixed Points}

Let $\sigma \in S_n$ be chosen uniformly at random from the double coset indexed by $T$. Let $\fp(\sigma)$ be the number of fixed points of $\sigma$, so that
\bal
\fp(\sigma) = \sum_{i=1}^n \theta_i,
\eal
where $\theta_i = \I_{\{\sigma(i) = i\}}$ is the indicator random variable that $\sigma$ has a fixed point at $i$.
 
We start by computing the probability that a value from $M_\ell$ occurs at an index in $L_k$. 

\begin{lemma}\label{fixedpointprob}
Let $\sigma \in S_n$ be a permutation chosen uniformly at random from the double coset in $S_\lambda \bslash S_n / S_\mu$ indexed by $T$. If $a \in L_k$ and $b \in M_\ell$, then 
\bal
P(\sigma(a) = b) = \frac{T_{k\ell}}{\lambda_k\mu_\ell}. \\
\eal
\end{lemma}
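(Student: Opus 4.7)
The plan is to exploit the $S_\lambda$--$S_\mu$ symmetry of the uniform measure on the double coset. The first step is to show that $P(\sigma(a)=b)$ depends on the pair $(a,b)$ only through the block indices $k$ and $\ell$. For any $a,a'\in L_k$, the transposition $(a\,a')$ lies in $S_\lambda$; multiplying $\sigma$ on the side stabilising the coset by this transposition yields a measure-preserving bijection from the coset to itself that sends $\{\sigma:\sigma(a)=b\}$ bijectively onto $\{\sigma:\sigma(a')=b\}$, so these two events have the same probability. The analogous argument using a transposition in $S_\mu$ shows $P(\sigma(a)=b)=P(\sigma(a)=b')$ whenever $b,b'\in M_\ell$. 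Hence there is a common value $p_{k\ell}$ with $P(\sigma(a)=b)=p_{k\ell}$ for every $a\in L_k$ and $b\in M_\ell$.

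The second step is to evaluate $p_{k\ell}$ by summation over the block. We have
\[
\lambda_k\mu_\ell\,p_{k\ell}\;=\;\sum_{a\in L_k}\sum_{b\in M_\ell}P(\sigma(a)=b)\;=\;E\bigl[|\{a\in L_k:\sigma(a)\in M_\ell\}|\bigr].
\]
By the very construction of the bijection between parabolic double cosets and contingency tables, the random variable inside the expectation coincides with the $(k,\ell)$-entry of the contingency table of $\sigma$, which is constant on the coset and equal to $T_{k\ell}$ by hypothesis. Dividing through by $\lambda_k\mu_\ell$ yields the claimed formula.

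The argument is short, and I do not anticipate a serious obstacle. The only point that requires care is identifying, from the convention, which side's multiplication by the chosen transposition in $S_\lambda$ (respectively $S_\mu$) realises the block-symmetry of the positions $a\in L_k$ (respectively the values $b\in M_\ell$); this is a routine check using the double-coset invariance and the definition of the contingency table recalled in the introduction.
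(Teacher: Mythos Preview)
Your proof is correct and takes a genuinely different route from the paper's. The paper argues by direct enumeration: it writes down the number of permutations in the coset satisfying $\sigma(a)=b$ (essentially by observing that fixing this value removes one position from $L_k$, one value from $M_\ell$, and one unit from $T_{k\ell}$, and then using the explicit formula $|S_\lambda\sigma S_\mu|=\prod_{i,j}\lambda_i!\mu_j!/T_{ij}!$), and divides by the coset size to obtain $T_{k\ell}/(\lambda_k\mu_\ell)$. Your argument instead avoids any explicit counting: you use the block-symmetry of the uniform measure to reduce to a single quantity $p_{k\ell}$, and then identify $\lambda_k\mu_\ell\,p_{k\ell}$ with the deterministic table entry $T_{k\ell}$ via linearity of expectation. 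The symmetry step is cleanest if phrased as ``the double coset is exactly the fibre $\{T(\sigma)=T\}$, and swapping two positions in the same $\lambda$-block or two values in the same $\mu$-block preserves the table'', which sidesteps the left/right convention issue you flag. Your approach is more conceptual and does not rely on the closed-form coset size; the paper's approach, on the other hand, sets up the explicit counting template that it reuses verbatim in Lemmas~\ref{fixedpointdoubleprob} and~\ref{twocycleprob}, so it fits more naturally with the surrounding computations.
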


\begin{proof}
Suppose $a \in L_k$ and $b \in M_\ell$. Then
\bal
P(\sigma(a) = b) &= \frac{|\{\sigma : \sigma(a) = b, a \in L_k, b \in M_\ell\}|}{|S_\lambda \bslash \sigma / S_\mu|} \\
&= \frac{\displaystyle\prod_{i \neq k, j \neq \ell} \frac{\lambda_i! \mu_j!}{T_{ij}!} \cdot \frac{(\lambda_k-1)!(\mu_\ell - 1)!}{(T_{k\ell}-1)!}}{\displaystyle\prod_{i,j} \frac{\lambda_i! \mu_j!}{T_{ij}!}} \\
&= \frac{T_{k\ell}}{\lambda_k\mu_\ell}. \qedhere
\eal 
\end{proof}

Using the previous lemma, we can compute the expected number of fixed points.  

\begin{theorem}\label{fixedpointmean}
Let $\sigma \in S_n$ be a permutation chosen uniformly at random from the double coset in $S_\lambda \bslash S_n / S_\mu$ indexed by $T$. Then 
\bal
E(\fp(\sigma)) = \sum_{k,\ell} \frac{|A_{k\ell}|T_{k\ell}}{\lambda_k \mu_\ell}. 
\eal
\end{theorem}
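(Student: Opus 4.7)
The plan is to apply linearity of expectation to the indicator decomposition $\fp(\sigma) = \sum_{i=1}^n \theta_i$ and then evaluate each $E(\theta_i) = P(\sigma(i) = i)$ using the probability computation from Lemma \ref{fixedpointprob}. The key structural observation is that the sets $\{A_{k\ell}\}_{k,\ell}$ partition $[n]$, so every index $i$ lies in exactly one $A_{k\ell}$, which lets us regroup the sum over indices according to which block-pair they belong to.

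More concretely, first I would note that if $i \in A_{k\ell} = L_k \cap M_\ell$, then $i \in L_k$ and $i \in M_\ell$, so taking $a = b = i$ in Lemma \ref{fixedpointprob} yields $P(\sigma(i) = i) = T_{k\ell}/(\lambda_k \mu_\ell)$. Then by linearity of expectation,
\begin{align*}
E(\fp(\sigma)) = \sum_{i=1}^n P(\sigma(i) = i) = \sum_{k,\ell} \sum_{i \in A_{k\ell}} P(\sigma(i) = i) = \sum_{k,\ell} \frac{|A_{k\ell}|\, T_{k\ell}}{\lambda_k \mu_\ell},
\end{align*}
where the regrouping in the middle equality uses the partition property of the $A_{k\ell}$. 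Empty $A_{k\ell}$ contribute zero and can be ignored.

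There is really no obstacle here; the entire content of the theorem is packaged inside Lemma \ref{fixedpointprob}, and the remaining work is just bookkeeping. The only minor point worth flagging is consistency of the indexing: one must check that when $i \in A_{k\ell}$, the pair $(k,\ell)$ is the unique pair with $i \in L_k$ and $i \in M_\ell$ (since $\{L_k\}$ and $\{M_\ell\}$ are each partitions of $[n]$), which makes the regrouping unambiguous.
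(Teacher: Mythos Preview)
Your proposal is correct and follows essentially the same approach as the paper's own proof: linearity of expectation, invoke Lemma~\ref{fixedpointprob} to get $E(\theta_i)=T_{k\ell}/(\lambda_k\mu_\ell)$ for $i\in A_{k\ell}$, then regroup the sum over $i$ according to the partition $\{A_{k\ell}\}$. Your extra remarks about uniqueness of $(k,\ell)$ and empty $A_{k\ell}$ are fine but not strictly needed.
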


\begin{proof}
By linearity of expectation and Lemma \ref{fixedpointprob}, 
\bal
E(\fp(\sigma)) &= \sum_{i=1}^n E\theta_i  = \sum_{k=1}^I \sum_{\ell=1}^J \sum_{i \in A_{k\ell}} \frac{T_{k\ell}}{\lambda_k \mu_\ell} = \sum_{k,\ell} \frac{|A_{k\ell}|T_{k\ell}}{\lambda_k \mu_\ell}. \qedhere
\eal
\end{proof}

Next we compute the variance of the number of fixed points. Let $\theta_{ij} = \I_{\{\sigma(i) = i, \sigma(j) = j\}}$ be the indicator random variable that $\sigma$ has fixed points at $i$ and $j$. 
We begin by computing the probability of having fixed points at two different indices. Note that it suffices to consider $i,j$ which are contained in nonempty $A_{k\ell}$. 

\begin{lemma}\label{fixedpointdoubleprob}
Let $\sigma \in S_n$ be a permutation chosen uniformly at random from the double coset in $S_\lambda \bslash S_n / S_\mu$ indexed by $T$. Suppose $i \in A_{k\ell}$ and $j \in A_{st}$. Then
\bal
P(\sigma(i) = i, \sigma(j) = j) = \begin{cases}
\frac{T_{k\ell} (T_{k\ell} - 1)}{\lambda_k (\lambda_k - 1) \mu_\ell (\mu_\ell - 1)} & \text{if $k = s$ and $\ell = t$} \\
\frac{T_{k\ell} T_{kt}}{\lambda_k (\lambda_k - 1) \mu_\ell \mu_t} & \text{if $k = s$ and $\ell \neq t$} \\
\frac{T_{k\ell} T_{s\ell}}{\lambda_k \lambda_s \mu_\ell (\mu_\ell - 1)} & \text{if $k \neq s$ and $\ell = t$} \\
\frac{T_{k\ell} T_{st}}{\lambda_k \lambda_s \mu_\ell \mu_t} & \text{if $k \neq s$ and $\ell \neq t$} \\
\end{cases}
\eal
\end{lemma}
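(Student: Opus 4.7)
The plan is to mimic the counting argument in the proof of Lemma \ref{fixedpointprob}, splitting into the four cases of the statement. Given that $\sigma(i)=i$ and $\sigma(j)=j$ with $i\in A_{k\ell}$ and $j\in A_{st}$, the number of such permutations in the double coset is the size of a smaller double coset obtained by deleting the prescribed positions from $L_k,L_s$ and the prescribed values from $M_\ell,M_t$. Using the size formula $|S_\lambda \sigma S_\mu| = (\prod_i\lambda_i!)(\prod_j\mu_j!)/\prod_{i,j}T_{ij}!$ in both numerator and denominator, nearly every factorial cancels and the probability reduces to a product of ratios of the form $(\lambda-r)!/\lambda!$ and $T!/(T-r)!$.

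I would carry out the four cases as follows. In Case 1 ($k=s,\ell=t$) both fixed points lie in the same block $A_{k\ell}$, so $\lambda_k,\mu_\ell$ each decrease by $2$ and $T_{k\ell}$ decreases by $2$, producing the factor $T_{k\ell}(T_{k\ell}-1)/[\lambda_k(\lambda_k-1)\mu_\ell(\mu_\ell-1)]$. In Case 2 ($k=s,\ell\neq t$) we remove two positions from $L_k$ but one value from each of $M_\ell$ and $M_t$, so $T_{k\ell}$ and $T_{kt}$ each drop by $1$, giving $T_{k\ell}T_{kt}/[\lambda_k(\lambda_k-1)\mu_\ell\mu_t]$. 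Case 3 is the symmetric version with the row/column roles swapped. Case 4 ($k\neq s,\ell\neq t$) is the cleanest: every relevant parameter drops by exactly $1$ and the answer is a clean product $T_{k\ell}T_{st}/[\lambda_k\lambda_s\mu_\ell\mu_t]$.

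Each case is a direct factorial simplification entirely analogous to the one displayed in the proof of Lemma \ref{fixedpointprob}, so no new idea is needed; the proof is essentially a book-keeping exercise. The main obstacle, such as it is, is making sure not to confuse the cases where $i$ and $j$ share a row block or a column block, since those cases introduce ``without replacement'' corrections like $T_{k\ell}-1$, $\lambda_k-1$, or $\mu_\ell-1$ that do not appear in Case 4. A clean way to present the argument is to first write the numerator in each case as the size of the reduced double coset indexed by the appropriate perturbed contingency table, then divide by the original size and cancel the factorials that appear in both, leaving only the four stated expressions.
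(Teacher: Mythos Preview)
Your proposal is correct and follows essentially the same approach as the paper: the paper also reduces to the counting argument of Lemma~\ref{fixedpointprob}, carries out Case~1 explicitly via the factorial cancellation you describe, and simply states that the other three cases follow by the same computation. Your write-up is slightly more detailed in that it spells out which parameters drop by how much in each case, but the method is identical.
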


\begin{proof}
This follows by similar computations as in the proof of Lemma \ref{fixedpointprob}. Suppose we are in the first case where $k = s$ and $\ell = t$, so that $i,j \in A_{k\ell}$. Then
\bal
P(\sigma(i) = i, \sigma(j) = j) &= \frac{|\{\sigma : \sigma(i) = i; \sigma(j) = j; i, j \in A_{k\ell}|}{|S_\lambda \bslash \sigma / S_\mu|} \\ 
&= \frac{\displaystyle\prod_{i \neq k, j \neq \ell} \frac{\lambda_i! \mu_j!}{T_{ij}!} \cdot \frac{(\lambda_k-2)!(\mu_\ell - 2)!}{(T_{k\ell}-2)!}}{\displaystyle\prod_{i,j} \frac{\lambda_i! \mu_j!}{T_{ij}!}} \\
&= \frac{T_{k\ell} (T_{k\ell} - 1)}{\lambda_k (\lambda_k - 1) \mu_\ell (\mu_\ell - 1)}.
\eal
The other three cases follow by similar computations and so we omit the details here. 
\end{proof}

Combining this lemma with the expected value obtained in Theorem \ref{fixedpointmean}, we can compute the variance.

\begin{theorem}\label{fixedpointvariance}
Let $\sigma \in S_n$ be a permutation chosen uniformly at random from the double coset in $S_\lambda \bslash S_n / S_\mu$ indexed by $T$. Then 
\bal
\Var(\fp(\sigma)) &= \sum_{k,\ell} \left( \frac{|A_{k\ell}|T_{k\ell}}{\lambda_k \mu_\ell}  - \frac{|A_{k\ell}|^2 T_{k\ell}^2}{\lambda_k^2 \mu_\ell^2} + \frac{|A_{k\ell}|(|A_{k\ell}| - 1) T_{k\ell} (T_{k\ell} - 1)}{\lambda_k (\lambda_k - 1) \mu_\ell (\mu_\ell - 1)} \right) \\
& + 2 \sum_{\substack{k \\ \ell < t}} \frac{|A_{k\ell}||A_{kt}|T_{k\ell} T_{kt}}{\lambda_k (\lambda_k - 1) \mu_\ell \mu_t} + 2 \sum_{\substack{\ell \\ k < s}} \frac{|A_{k\ell}| |A_{s\ell}| T_{k\ell} T_{s\ell}}{\lambda_k \lambda_s \mu_\ell (\mu_\ell - 1)}. 
\eal
\end{theorem}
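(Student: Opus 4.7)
The plan is to compute $\Var(\fp(\sigma)) = E(\fp(\sigma)^2) - (E\fp(\sigma))^2$ directly, by expanding both pieces and leveraging Lemma \ref{fixedpointdoubleprob} and Theorem \ref{fixedpointmean}. Since $\theta_i$ is an indicator, $\theta_i^2 = \theta_i$, so
\[
E(\fp(\sigma)^2) = \sum_{i=1}^n E\theta_i + \sum_{i \neq j} E\theta_{ij} = E(\fp(\sigma)) + \sum_{i \neq j} E\theta_{ij}.
\]
The first term contributes the $|A_{k\ell}| T_{k\ell}/(\lambda_k \mu_\ell)$ piece appearing in the claimed identity.

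Next, I would decompose $\sum_{i \neq j} E\theta_{ij}$ according to which pair of sets $(A_{k\ell}, A_{st})$ contain $i$ and $j$, using the partition $\{A_{k\ell}\}$ of $[n]$ and applying the four cases of Lemma \ref{fixedpointdoubleprob}. For ordered pairs $(i,j)$ with both in $A_{k\ell}$, there are $|A_{k\ell}|(|A_{k\ell}|-1)$ such pairs, and each contributes $T_{k\ell}(T_{k\ell}-1)/(\lambda_k(\lambda_k-1)\mu_\ell(\mu_\ell-1))$. For ordered pairs with $i \in A_{k\ell}, j \in A_{kt}$ and $\ell \neq t$, there are $|A_{k\ell}||A_{kt}|$ such pairs; summing over $\ell \neq t$ gives twice the sum over $\ell < t$. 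The cases with $k \neq s, \ell = t$ and with $k \neq s, \ell \neq t$ are handled analogously.

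Separately, using the formula from Theorem \ref{fixedpointmean},
\[
(E\fp(\sigma))^2 = \sum_{k,\ell, s,t} \frac{|A_{k\ell}||A_{st}|T_{k\ell} T_{st}}{\lambda_k \lambda_s \mu_\ell \mu_t},
\]
which I would split into the same four cases. The "all different" case ($k \neq s, \ell \neq t$) cancels exactly against its counterpart in $E(\fp(\sigma)^2)$, since $P(\sigma(i) = i,\sigma(j) = j) = P(\sigma(i) = i)P(\sigma(j) = j)$ in that case by Lemma \ref{fixedpointdoubleprob}. The "both in same $A_{k\ell}$" case combines with the diagonal $k=s, \ell=t$ term of $(E\fp)^2$ to yield the $-|A_{k\ell}|^2 T_{k\ell}^2/(\lambda_k^2 \mu_\ell^2)$ and $+|A_{k\ell}|(|A_{k\ell}|-1)T_{k\ell}(T_{k\ell}-1)/(\lambda_k(\lambda_k-1)\mu_\ell(\mu_\ell-1))$ terms in the stated variance. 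The two "mixed" cases (same row, different column; different row, same column) combine with their $(E\fp)^2$ counterparts to give the two remaining sums.

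The main obstacle is the combinatorial bookkeeping, particularly cleanly simplifying the differences $\tfrac{1}{\lambda_k(\lambda_k-1)} - \tfrac{1}{\lambda_k^2}$ (and its $\mu$-analogue) that arise in the "mixed" cases, and converting the ordered sums over $\ell \neq t$ and $k \neq s$ into the unordered sums over $\ell < t$ and $k < s$ that carry the factor of $2$ in the statement. Once these simplifications are carried out and terms are collected by case, the expression in the theorem follows directly.
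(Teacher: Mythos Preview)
Your approach is the same as the paper's: expand $E(\fp(\sigma)^2)$ into diagonal plus off-diagonal terms, apply Lemma~\ref{fixedpointdoubleprob} case by case over the partition $\{A_{k\ell}\}$, expand $(E\fp(\sigma))^2$ from Theorem~\ref{fixedpointmean}, and subtract. The paper writes $2\sum_{i<j}$ where you write $\sum_{i\neq j}$, which is cosmetic.

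The one substantive divergence is exactly the bookkeeping you flag as the obstacle. The paper's displayed expansion of $(E\fp(\sigma))^2$ records only the diagonal terms $\sum_{k,\ell}|A_{k\ell}|^2 T_{k\ell}^2/(\lambda_k^2\mu_\ell^2)$ and the fully off-diagonal terms with $k<s$ and $\ell<t$; it omits the mixed cross terms with $k=s,\,\ell\neq t$ and $k\neq s,\,\ell=t$. That omission is why the last two sums in the stated formula carry $\tfrac{1}{\lambda_k(\lambda_k-1)}$ and $\tfrac{1}{\mu_\ell(\mu_\ell-1)}$ unmodified rather than the covariance differences $\tfrac{1}{\lambda_k(\lambda_k-1)}-\tfrac{1}{\lambda_k^2}=\tfrac{1}{\lambda_k^2(\lambda_k-1)}$ and $\tfrac{1}{\mu_\ell(\mu_\ell-1)}-\tfrac{1}{\mu_\ell^2}=\tfrac{1}{\mu_\ell^2(\mu_\ell-1)}$ that you correctly anticipate. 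So your ``main obstacle'' is not a simplification waiting to be performed: carrying out your outline faithfully produces those differences, and the printed formula appears to be missing the subtraction of the mixed cross terms from $(E\fp(\sigma))^2$.
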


\begin{proof}
By Theorem \ref{fixedpointmean} and Lemma \ref{fixedpointdoubleprob} we have
\bal
E(\fp(\sigma)^2) &= \sum_{i=1}^n E(\theta_i) + 2 \sum_{i < j} E(\theta_{ij}) \\
&= \sum_{k,\ell} \frac{|A_{k\ell}|T_{k\ell}}{\lambda_k \mu_\ell} + 2\sum_{k,\ell} \sum_{i,j \in A_{k\ell}} \frac{T_{k\ell} (T_{k\ell} - 1)}{\lambda_k (\lambda_k - 1) \mu_\ell (\mu_\ell - 1)} \\
& + 2\sum_{\substack{k \\ \ell < t}} \sum_{\substack{i \in A_{k\ell} \\ j \in A_{kt}}} \frac{T_{k\ell} T_{kt}}{\lambda_k (\lambda_k - 1) \mu_\ell \mu_t} 
+ 2\sum_{\substack{\ell \\ k < s}} \sum_{\substack{i \in A_{k\ell} \\ j \in A_{s\ell}}} \frac{T_{k\ell} T_{s\ell}}{\lambda_k \lambda_s \mu_\ell (\mu_\ell - 1)} \\
& + 2\sum_{\substack{k < s \\ \ell < t}} \sum_{\substack{i \in A_{k\ell} \\ j \in A_{st}}} \frac{T_{k\ell} T_{st}}{\lambda_k \lambda_s \mu_\ell \mu_t} 
\eal

Moreover, 
\bal
E(\fp(\sigma))^2 =  \sum_{k,\ell} \frac{|A_{k\ell}|^2 T_{k\ell}^2}{\lambda_k^2 \mu_\ell^2} + 2 \sum_{\substack{k < s \\ \ell < t}} \frac{|A_{k\ell}| |A_{st}| T_{k\ell} T_{st}}{\lambda_k \lambda_s \mu_\ell \mu_t}
\eal

Therefore, using $\Var(\fp(\sigma)) = E(\fp(\sigma)^2) - E(\fp(\sigma))^2$ and simplifying yields the desired variance. 
\end{proof}

\subsection{Two-Cycles} 

In order to apply the size-bias coupling version of Stein's method, we will need the probability that $\sigma$ has a two-cycle at some given indices. 

\begin{lemma}\label{twocycleprob}
Let $\sigma \in S_n$ be a permutation chosen uniformly at random from the double coset in $S_\lambda \bslash S_n / S_\mu$ indexed by $T$. Suppose $a \in M_s$, $a \in L_\ell$, $b \in M_t$, and $b \in L_r$. 
Then the probability that $\sigma$ has a two-cycle at positions $a$ and $b$ is
\bal
P(\sigma(a) = b, \sigma(b) = a) = 
\begin{cases} 
\frac{T_{\ell s}T_{r t}}{\lambda_\ell \lambda_r \mu_s \mu_t} & \text{if } \ell \neq r \text{ and } s \neq t \\
\frac{T_{\ell s}T_{\ell t}}{\lambda_\ell (\lambda_\ell - 1) \mu_s \mu_t} & \text{if } \ell = r \text{ and } s \neq t \\
\frac{T_{\ell s}T_{r s}}{\lambda_\ell \lambda_r \mu_s (\mu_s - 1)} & \text{if } \ell \neq r \text{ and } s = t \\
\frac{T_{\ell s}(T_{\ell s} - 1)}{\lambda_\ell (\lambda_\ell - 1) \mu_s (\mu_s - 1)} & \text{if } \ell = r \text{ and } s = t 
\end{cases}
\eal
\end{lemma}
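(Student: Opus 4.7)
The plan is to mirror the counting argument used in Lemmas \ref{fixedpointprob} and \ref{fixedpointdoubleprob}: compute the probability as a ratio, with numerator equal to the number of permutations in the double coset satisfying both $\sigma(a) = b$ and $\sigma(b) = a$, and denominator equal to the double coset size
\bal
|S_\lambda \sigma S_\mu| = \frac{(\prod_i \lambda_i!)(\prod_j \mu_j!)}{\prod_{i,j} T_{ij}!}.
\eal
The key observation is that imposing the two-cycle constraint removes the positions $a,b$ from further consideration (they are already matched to values $b,a$ respectively), so the remaining permutations are in bijection with a double coset problem on $[n] \setminus \{a,b\}$ whose modified contingency table $T'$ agrees with $T$ except in the entries corresponding to the two forced mappings.

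In each of the four cases, I would identify precisely which row sums, column sums, and table entries are decremented. For $\sigma(a) = b$ with $a \in L_\ell, b \in M_t$ we reduce $\lambda_\ell$, $\mu_t$, and the entry in row $\ell$, column $t$ by one; for $\sigma(b) = a$ with $b \in L_r, a \in M_s$ we reduce $\lambda_r$, $\mu_s$, and the entry in row $r$, column $s$ by one. When $\ell = r$ the block $L_\ell$ loses two positions rather than one, so the row-factorial contribution becomes $(\lambda_\ell - 2)!$ instead of $(\lambda_\ell - 1)!$; analogously for $s = t$. When both a row and a column are decremented twice at the same cell (the case $\ell = r$ and $s = t$), a single entry $T_{\ell s}$ drops by two, producing the $T_{\ell s}(T_{\ell s} - 1)$ factor in the numerator.

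Forming the ratio, all factorials that are unchanged cancel between numerator and denominator, and each decremented row/column/cell contributes an inverse factor of the form $\lambda_k$, $\mu_j$, or $T_{ij}$ (or $\lambda_k(\lambda_k - 1)$, $\mu_j(\mu_j - 1)$, $T_{ij}(T_{ij} - 1)$ if decremented twice). Arranging the surviving factors yields the four expressions in the statement. The main obstacle is purely bookkeeping, particularly ensuring that in the single-cell case $\ell = r, s = t$ the entry $T_{\ell s}$ is treated as being decremented twice rather than once, and that when $\ell = r$ (or $s = t$) the row (or column) factorial correctly contributes $\lambda_\ell(\lambda_\ell - 1)$ (or $\mu_s(\mu_s - 1)$) in the denominator; the remaining cases then follow by the same template.
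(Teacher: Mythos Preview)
Your approach is correct and essentially identical to the paper's: the paper computes the first case explicitly as a ratio of the constrained count to the double coset size and then states that the remaining three cases follow by the same computation. One small remark on bookkeeping: your identification of the decremented cells as $T_{\ell t}$ (from $\sigma(a)=b$ with $a\in L_\ell$, $b\in M_t$) and $T_{rs}$ (from $\sigma(b)=a$ with $b\in L_r$, $a\in M_s$) is the correct reading of the contingency-table definition; in Cases~2--4 this coincides with the displayed formula because of the index equalities, while in Case~1 the paper's displayed numerator $T_{\ell s}T_{rt}$ appears to be a typographical slip for $T_{\ell t}T_{rs}$ (this case is never invoked in the application, so the discrepancy is harmless).
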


\begin{proof}
Suppose we are in the first case where $\ell \neq r$ and $s \neq t$. Then
\bal
P(\sigma(a) = b, \sigma(b) = a) &= \frac{|\{\sigma : \sigma(a) = b, \sigma(b) = a, a \in M_s, a \in L_\ell, b \in M_t, b \in L_r\}|}{|S_\lambda \bslash \sigma / S_\mu|} \\
&= \frac{\displaystyle\prod_{\substack{i \neq j \neq \ell \\ i \neq j \neq r}} \frac{\lambda_i! \mu_j!}{T_{ij}!} \cdot \frac{(\lambda_\ell - 1)!(\mu_s - 1)!}{(T_{\ell s} - 1)!} 
\cdot \frac{(\lambda_r - 1)!(\mu_t - 1)!}{(T_{r t} - 1)!}}{\displaystyle\prod_{i,j} \frac{\lambda_i! \mu_j!}{T_{ij}!}} \\
&= \frac{T_{\ell s}T_{r t}}{\lambda_\ell \lambda_r \mu_s \mu_t}.
\eal 
The other three cases follow by similar computations and so we omit the details here. 
\end{proof}


\subsection{Poisson Limit Theorem for Fixed Points} \label{PoissonApproximationForFixedPoints} 

We now prove the Poisson limit theorem for the number of fixed points. 

\begin{proof}[Proof of Theorem \ref{PoissonLimitTheorem}] 
Let $W := \fp(\sigma) = \sum_{i=1}^n \theta_i$, where $\theta_i = \I_{\{\sigma(i) = i\}}$. By Proposition \ref{fixedpointmean}, the expected number of fixed points is
\bal
EW = \sum_{k,\ell} \frac{|A_{k\ell}|T_{k\ell}}{\lambda_k \mu_\ell} =: \nu_n.
\eal

We start by defining a size-bias distribution, $W^s$, of $W$ as follows. Pick $I$ independent of all else with probability $P(I = i) = \frac{E\theta_i}{EW} = \frac{T_{k\ell}}{\nu_n \lambda_k \mu_\ell}$, for $i \in L_k$ and $i \in M_\ell$. 
Suppose that $\sigma(X) = I$. If $X = I$, then $I$ is already a fixed point of $\sigma$, so we simply set $\sigma^s = \sigma$. 

So suppose $X \neq I$ so that $\sigma$ does not have a fixed point at $I$. If the indices $I$ and $X$ are in the same $\lambda$-block or if the values $I$ and $\sigma(I)$ are in the same $\mu$-block, 
then we simply swap the values $\sigma(I)$ and $I$ to get a new permutation with a fixed point at index $I$. 
Set $\sigma^s$ to be the resulting permutation. Observe that this swap does not change the value of $T_{ij}$ for all $i,j$. 

Otherwise, indices $I$ and $X$ are in different $\lambda$-blocks and values $\sigma(I)$ and $I$ are in different $\mu$-blocks. 
Suppose $I \in L_k$, $\sigma(I) \in M_a$, and $X \in L_r$. 
Pick an element $z$ uniformly at random from the set $\{\sigma(y) \in M_\ell : y \in L_k \}$, which is of size $T_{k\ell}$, and then swap $\sigma(I)$ and $z$. 
Note that we can always find such a $z$ since $T_{k\ell} \geq 1$ by hypothesis. 
Finally, swap the values $z$ and $\sigma(X)$, so that the resulting permutation has a fixed point at $I$. Set $\sigma^s$ to be the resulting permutation.
Note that in this case, it is not enough to simply swap the values $\sigma(I)$ and $I$ since we must account for the change in the values of $T_{ij}$ when we are making swaps. 
Observe that this two step swap also does not change the value of $T_{ij}$ for all $i,j$. 

By construction, $\sigma^s$ obtained by the coupling above has the same marginal distribution as a random permutation from the double coset indexed by $T$, conditioned to have a fixed point at position $I$. 
Indeed the distribution of a random permutation from the double coset indexed by $T$ only depends on the counts $T_{ij}$ and not on the positions of values within blocks. 

Set $W^s = \fp(\sigma^s) = \sum_{\beta \neq I} \theta_\beta^{(I)} + 1$, where $\theta_\beta^{(I)}$ is the indicator random variable that $\sigma^s$ has a fixed point at position $\beta$.
By Proposition \ref{sizebiascouplingconstruction}, $W^s$ has the size-bias distribution of $W$. Thus $(W, W^s)$ is a size-bias coupling. 

Let $V_i := \sum_{\beta \neq i} \theta_\beta^{(i)}$, so that $W^s = V_I + 1$. Let $Y_n \sim \Poi(\nu_n)$. By Theorem \ref{sizebiasStein}, 
\bal
d_{TV}(W,Y_n) &\leq \min\{1, \nu_n\} E|W + 1 - W^s| \\ 
&= \min\{1, \nu_n\} \sum_{i=1}^n P(I = i) E|W - V_i| \\
&= \min\{1, \nu_n^{-1}\} \sum_{k,\ell} \sum_{i \in A_{k\ell}} \frac{T_{k\ell}}{\lambda_k \mu_\ell} E|W - V_i| \\
&= \min\{1, \nu_n^{-1}\} \sum_{k,\ell} \frac{|A_{k\ell}| T_{k\ell}}{\lambda_k \mu_\ell} E\left|W - V_i\right|.  \\
\eal

Next, we compute the random variable $\left|W - V_i\right| = \left|\theta_i + \sum_{\beta \neq i} \left(\theta_\beta - \theta_\beta^{(i)}\right)\right|$. 
Suppose $i \in L_k$ and $i \in M_\ell$. There are several cases.

\textbf{Case 1}. If $i$ is a fixed point, then $|W - V_i| = 1$. By Lemma \ref{fixedpointprob}, this occurs with probability $\frac{T_{k\ell}}{\lambda_k \mu_\ell}$. 

So we may assume that $i$ is not a fixed point. Let $\sigma(x) = i$. Suppose $\sigma(i) \in M_a$ and $x \in L_r$. 

\textbf{Case 2}. If $\ell = a$ so that values $i, \sigma(i)$ are in the same $\mu$-block, or if $k = r$ so that indices $i, x$ are in the same $\lambda$-block, then the coupling swaps the values $i$ and $\sigma(i)$. 
In this case, $|W - V_i| = \I_{\{i, \sigma(i) \text{ are in a two-cycle}\}} = \I_{\{\sigma(i) = x, \sigma(x) = i\}}$. Thus by Lemma \ref{twocycleprob}, the probability that $i$ and $x$ are in a two-cycle is 
\bal
P(\text{$i$ and $x$ are in a two-cycle}) &= \frac{T_{k\ell} T_{r\ell}}{\lambda_k \lambda_r \mu_\ell(\mu_\ell - 1)}\I_{\{x \in M_\ell, x \in L_r, \ell = a, k \neq r\}} \\ 
&\qquad + \frac{T_{k\ell} T_{k a}}{\lambda_k (\lambda_k - 1) \mu_\ell \mu_a} \I_{\{x \in M_a, x \in L_k, \ell \neq a, k = r\}} \\ 
& \qquad + \frac{T_{k\ell} (T_{k\ell} - 1)}{\lambda_k (\lambda_k - 1) \mu_\ell (\mu_\ell - 1)} \I_{\{x \in M_\ell, x \in L_k, \ell = a, k = r\}}. 
\eal

\textbf{Case 3}. Otherwise $\ell \neq a$ and $k \neq r$ so that values $i, \sigma(i)$ are in different $\mu$-blocks and indices $i, x$ are in different $\lambda$-blocks.  
Then the coupling picks $z \in \{\sigma(y) \in M_\ell : y \in L_k \}$ uniformly at random, swaps values $z$ and $\sigma(i)$, and then swaps values $z$ and $i$. 
By construction, $|W - V_i| = \I_{\{z \text{ is a fixed point}\}}$. Again by Lemma \ref{fixedpointprob}, this occurs with probability $\frac{T_{k\ell}}{\lambda_k \mu_\ell}$. 

Combining the cases above gives the expected value 
\bal
E|W - V_i| &= P(i \text{ is a fixed point}) + P(i \text{ is in a two-cycle}) \\
&\qquad + P(z \text{ is a fixed point}) \\
&= \frac{T_{k\ell}}{\lambda_k \mu_\ell} + \sum_{x \neq i} P(\text{$i$ and $x$ are in a two-cycle}) + \frac{T_{k\ell}}{\lambda_k \mu_\ell} \\
&\leq \frac{5T_{k\ell}}{\lambda_k \mu_\ell}.
\eal 

Thus the total variation distance is upper bounded by
\bal
d_{TV}(W,Y_n) &\leq \min\{1, \nu_n^{-1}\} \sum_{k,\ell} \frac{|A_{k\ell}| T_{k\ell}}{\lambda_k \mu_\ell} \left( \frac{5T_{k\ell}}{\lambda_k \mu_\ell} \right) \\
&\leq 5 \min\{1, \nu_n^{-1}\} \sum_{k,\ell} \frac{|A_{k\ell}|}{\lambda_k \mu_\ell} \\
&\leq 5 \min\{1, \nu_n^{-1}\} \sum_{k,\ell} \frac{\min\{\lambda_k, \mu_\ell\}}{\lambda_k\mu_\ell} \\
&\leq 5 \min\{1, \nu_n^{-1}\} \sum_{k,\ell} \frac{1}{\max\{\lambda_k, \mu_\ell\}} \\
&\leq \frac{5 (I+J-1) \min\{1, \nu_n^{-1}\}}{\max\{\lambda_I, \mu_J\}}
\eal
where we used the facts that $|A_{k\ell}| \leq \min\{\lambda_k, \mu_\ell\}$ for all $k,\ell$, the size of the partition $\{A_{k\ell}\}$ is at most $I+J-1$, and $\lambda_k \geq \lambda_I$ for all $k$ and $\mu_\ell \geq \mu_J$ for all $\ell$. 

Let $Y \sim \Poi(\nu)$. By hypothesis, since $\limsup_{n \to \infty} \nu_n = \nu$, $I + J - 1 \to C$, $\lambda_I \to \infty$, and $\mu_J \to \infty$, we have that $d_{TV}(W,Y_n) \to 0$ and $d_{TV}(Y_n,Y) \to 0$ as $n \to \infty$. 
Therefore $d_{TV}(W, Y) \to 0$ as $n \to \infty$, and thus $W \xrightarrow{d} Y$. 
\end{proof} 

As a corollary, we can recover Montmort's classical result on the Poisson limit theorem for the number of fixed points in a uniformly random permutation, along with a rate of convergence. 

\begin{corollary}[\cite{Mon08}]
Let $\sigma \in S_n$ be a permutation chosen uniformly at random. Then 
\bal
d_{TV}(\fp(\sigma), Y) \leq \frac{5}{n},
\eal
where $Y \sim \Poi(1)$. Thus $\fp(\sigma) \xrightarrow{d} \Poi(1)$ as $n \to \infty$. 
\end{corollary}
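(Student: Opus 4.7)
The plan is to derive this corollary as an immediate specialization of Theorem \ref{PoissonLimitTheorem} to the trivial parabolic case. The key observation is that if we take $\lambda = \mu = (n)$, the one-part partition, then $S_\lambda = S_\mu = S_n$, so there is a single parabolic double coset $S_n \bslash S_n / S_n = S_n$, and the uniform distribution on this double coset coincides with the uniform distribution on $S_n$. This puts a uniformly random permutation into the framework of Theorem \ref{PoissonLimitTheorem}.

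First I would specify the structural quantities under this choice. We have $I = J = 1$, so there is a single block $L_1 = M_1 = [n]$, a single set $A_{11} = [n]$, and the associated contingency table $T$ is the $1 \times 1$ table with the lone entry $T_{11} = n$. In particular $T_{11} = n \geq 1$, so the hypothesis $T_{k\ell} \geq 1$ whenever $A_{k\ell} \neq \emptyset$ is trivially met. Next I would compute the Poisson rate using Theorem \ref{fixedpointmean}:
\begin{align*}
\nu_n = E(\fp(\sigma)) = \frac{|A_{11}| T_{11}}{\lambda_1 \mu_1} = \frac{n \cdot n}{n \cdot n} = 1,
\end{align*}
so $Y_n \sim \Poi(1) = Y$, and $\min\{1, \nu_n^{-1}\} = 1$.

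Finally, plugging $I + J - 1 = 1$ and $\max\{\lambda_I, \mu_J\} = n$ into the bound of Theorem \ref{PoissonLimitTheorem} immediately gives
\begin{align*}
d_{TV}(\fp(\sigma), Y) \leq \frac{5(I+J-1)\min\{1,\nu_n^{-1}\}}{\max\{\lambda_I,\mu_J\}} = \frac{5}{n},
\end{align*}
and letting $n \to \infty$ yields $\fp(\sigma) \xrightarrow{d} \Poi(1)$. There is no real obstacle here: the only thing to confirm is that the specialization $\lambda = \mu = (n)$ correctly models a uniformly random permutation as a uniform draw from a parabolic double coset, after which the bound falls out by direct substitution into the master estimate.
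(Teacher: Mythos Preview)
Your proposal is correct and follows exactly the paper's own argument: set $\lambda = \mu = (n)$ in Theorem~\ref{PoissonLimitTheorem} and read off the bound $\frac{5}{n}$. You have simply spelled out the verification of the hypotheses and the computation $\nu_n = 1$ in more detail than the paper does.
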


\begin{proof}
Setting $\lambda = \mu = (n)$ in Theorem \ref{PoissonLimitTheorem} gives $d_{TV}(\fp(\sigma),Y) \leq \frac{5}{n}$, where $Y \sim \Poi(1)$. Therefore, $d_{TV}(W, Y) \to 0$ as $n \to \infty$, and so $\fp(\sigma) \xrightarrow{d} \Poi(1)$. 
\end{proof}

In \cite{CD18} Crane and DeSalvo used Stein's method and size-bias coupling to obtain a total variation upper bound of $\frac{3}{n}$ for uniformly random permutations, which is slightly better than our bound. 
However, using properties of alternating series and the definition of total variation distance directly, Arratia and Tavare \cite{AT92} showed a super-exponential upper bound of $\frac{2^n}{(n+1)!}$ for uniformly random permutations.


\section{Descents} \label{Descents} 

Let $\sigma \in S_n$ be chosen uniformly at random from the double coset in $S_\lambda \bslash S_n / S_\mu$ indexed by $T$. Let $\des(\sigma)$ be the number of descents of $\sigma$, so that 
\bal
\des(\sigma) &= \sum_{i=1}^{n-1} I_i, 
\eal
where $I_i = \I_{\{\sigma(i) > \sigma(i+1)\}}$ is the indicator random variable that $\sigma$ has a descent at index $i$. 

\subsection{Mean and Variance of Descents} \label{MeanVarDescents}

In this section we compute the mean and asymptotic variance of $\des(\sigma)$. We start by computing the probability that a descent occurs at some fixed position. 
In what follows, let $\lambda = (\lambda_1,\ldots, \lambda_I)$ and $\mu = (\mu_1, \ldots, \mu_J)$ be two partitions of $n$. 

\begin{lemma}\label{descentprob}
Let $\sigma$ be a permutation chosen uniformly at random from the double coset in $S_\lambda \bslash S_n / S_\mu$ indexed by $T$. Then
\bal
P(I_i = 1) = \begin{cases} 
\frac{1}{2} & \text{if $i \in L_k^o$, for $k \in [I]$},  \\ 
\frac{1}{2} - \frac{f(k)}{2\lambda_k\lambda_{k+1}} & \text{if $i = l_k^b$, for $k \in [I-1]$},
\end{cases}
\eal
where $f(k) := \sum_{1 \leq a < b \leq J} (T_{ka}T_{k+1,b} - T_{k+1,a}T_{kb})$, for $k \in [I-1]$. 
\end{lemma}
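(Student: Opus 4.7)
The plan is to handle the two cases separately.

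\textbf{Interior case} ($i \in L_k^o$): Both positions $i$ and $i+1$ lie in $L_k$. The joint distribution of $(\sigma(i), \sigma(i+1))$ depends only on the block memberships of positions and values, as is implicit in the proofs of Lemmas~\ref{fixedpointprob} and~\ref{fixedpointdoubleprob}. Since $i$ and $i+1$ play identical roles inside $L_k$, the pair $(\sigma(i), \sigma(i+1))$ is exchangeable, and since $\sigma(i) \neq \sigma(i+1)$ almost surely, this forces $P(\sigma(i) > \sigma(i+1)) = 1/2$.

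\textbf{Border case} ($i = l_k^b$): Here $i \in L_k$ and $i+1 \in L_{k+1}$. The first step is to establish the joint block-level distribution
\bal
P\bigl(\sigma(i) \in M_a,\, \sigma(i+1) \in M_b\bigr) = \frac{T_{ka}\,T_{k+1,b}}{\lambda_k \lambda_{k+1}}
\eal
for all $a, b \in [J]$, by a counting argument analogous to Lemma~\ref{fixedpointdoubleprob}. The per-ordered-value-pair probabilities take slightly different forms in the subcases $a = b$ and $a \neq b$ (as in the case split of Lemma~\ref{fixedpointdoubleprob}), but upon summing over $x \in M_a$ and $y \in M_b$ the $(\mu_a - 1)$-style adjustments cancel and the stated uniform formula emerges.

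Next, I would decompose $P(I_i = 1)$ according to the $\mu$-blocks of $\sigma(i)$ and $\sigma(i+1)$: when $a > b$ the descent is automatic; when $a < b$ it is impossible; when $a = b$ the conditional probability is $1/2$ by the same exchangeability argument applied within $M_a$. Summing yields
\bal
P(I_i = 1) = \frac{1}{\lambda_k \lambda_{k+1}} \left( \sum_{a > b} T_{ka}\,T_{k+1,b} + \tfrac{1}{2} \sum_{a} T_{ka}\,T_{k+1,a} \right).
\eal
Applying the row/column identity $\sum_{a,b} T_{ka}T_{k+1,b} = \lambda_k \lambda_{k+1}$ to eliminate the diagonal term, then relabeling $(a,b) \mapsto (b,a)$ in one of the off-diagonal sums, rewrites the right-hand side as $\tfrac{1}{2} - f(k)/(2\lambda_k \lambda_{k+1})$, recognizing the alternating structure in the definition of $f(k)$.

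The main obstacle is the case analysis in the border case: the raw joint probabilities for $(\sigma(i),\sigma(i+1)) = (x,y)$ depend on whether $x$ and $y$ share a $\mu$-block, so one must sum over value pairs within each block before simplifying. Once that is done, recognizing the alternating sum defining $f(k)$ is a direct index relabeling.
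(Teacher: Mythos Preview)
Your proposal is correct and follows essentially the same approach as the paper: the interior case is handled by the same swap/exchangeability argument, and the border case is handled by the same decomposition according to the $\mu$-block membership of $(\sigma(i),\sigma(i+1))$, followed by the same row-sum identity $\sum_{a,b}T_{ka}T_{k+1,b}=\lambda_k\lambda_{k+1}$ to extract $f(k)$. The only cosmetic difference is that the paper computes the joint probability $P(I_i=1,\ \sigma(i)\in M_b,\ \sigma(i+1)\in M_a)$ directly by counting in each subcase, whereas you first isolate $P(\sigma(i)\in M_a,\ \sigma(i+1)\in M_b)=T_{ka}T_{k+1,b}/(\lambda_k\lambda_{k+1})$ and then multiply by the conditional descent probability $1$, $0$, or $\tfrac12$; the resulting sums are identical.
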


\begin{proof}
Suppose that $i \in L_k$ for $k \in [I]$. There are two cases for the position of $i$:
\begin{itemize}
\item {\bf Case 1 (Interior):} $i \in L_k^o$, for $k \in [I]$,
\item {\bf Case 2 (Border):} $i = l_k^b$, for $k \in [I-1]$.
\end{itemize}

First suppose $i \in L_k^o$. 
Then swapping the values $\sigma(i)$ and $\sigma(i+1)$ maps a uniformly random permutation in $S_\lambda \bslash S_n / S_\mu$ to a permutation with the same distribution, 
and transforms a permutation with a descent at $i$ to a permutation without a descent at $i$. It follows that $P(I_i = 1, \text{Case 1}) = \frac{1}{2}$. 

So suppose $i = l_k^b$. There are two subcases for the values of $\sigma(i)$ and $\sigma(i+1)$:
\begin{itemize}
\item {\bf Subcase 1 (Same block):} $\sigma(i), \sigma(i+1) \in M_\ell$, for $\ell \in [J]$, 
\item {\bf Subcase 2 (Different blocks):} $\sigma(i) \in L_b$ and $\sigma(i+1) \in L_a$, for $1 \leq a < b \leq J$. 
\end{itemize}
Then 
\bal
P(I_i = 1, \text{Case 2}, \text{Subcase 1}) &= \frac{\frac{\binom{\mu_\ell}{2} (\mu_\ell - 2)!(\lambda_k - 1)!(\lambda_{k+1} - 1)!}{(T_{k\ell} - 1)!(T_{k+1,\ell} - 1)!} 
\frac{\displaystyle\prod_{i \notin \{k,k+1\}, j \neq \ell} \lambda_i! \mu_j!}{\displaystyle\prod_{(i,j) \notin \{(k,\ell), (k+1,\ell)\}} T_{ij}!}}{\displaystyle\prod_{i,j} \frac{\lambda_i! \mu_j!}{T_{ij}!}}  \\
&= \frac{T_{k\ell}T_{k+1,\ell}}{2\lambda_k \lambda_{k+1}}, \\
P(I_i = 1, \text{Case 2}, \text{Subcase 2}) &= \frac{\frac{ \mu_a (\mu_a - 1)! \mu_b (\mu_b - 1)! (\lambda_k - 1)!(\lambda_{k+1} - 1)!}{(T_{kb} - 1)!(T_{k+1,a} - 1)!}
\frac{\displaystyle\prod_{i \notin \{k,k+1\}, j \notin \{a,b\}} \lambda_i! \mu_j!}{\displaystyle\prod_{(i,j) \notin \{(k+1,b), (k,a)\}}  T_{ij}!}}{\displaystyle\prod_{i,j} \frac{\lambda_i! \mu_j!}{T_{ij}!}}  \\
&= \frac{T_{k+1,a}T_{kb}}{\lambda_k \lambda_{k+1}}.
\eal
Summing over the two subcases yields
\bal
&P(I_i = 1, \text{Case $2$}) \\
&= \sum_{\ell = 1}^J P(I_i = 1, \text{Case 2}, \text{Subcase 1}) + \sum_{1 \leq a < b \leq J} P(I_i = 1, \text{Case 2}, \text{Subcase 2}) \\
& = \frac{1}{2\lambda_k\lambda_{k+1}} \left(\sum_{\ell = 1}^J T_{k\ell}T_{k+1,\ell} + 2 \sum_{1 \leq a < b \leq J} T_{k+1,a}T_{kb}\right)  \\
& = \frac{1}{2\lambda_k\lambda_{k+1}} \left( \left(\sum_{\ell = 1}^J T_{k\ell}\right)\left(\sum_{\ell = 1}^J T_{k+1,\ell}\right) - \sum_{1 \leq a < b \leq J} ( T_{ka}T_{k+1,b} - T_{k+1,a}T_{kb} )  \right) \\
& = \frac{\lambda_k\lambda_{k+1}}{2\lambda_k\lambda_{k+1}} - \frac{1}{2\lambda_k\lambda_{k+1}} \sum_{1 \leq a < b \leq J}  (T_{ka}T_{k+1,b} - T_{k+1,a}T_{kb}) \\
& = \frac{1}{2} - \frac{f(k)}{2\lambda_k\lambda_{k+1}}. \qedhere
\eal
\end{proof}

Next we compute the probability that a descent occurs in two consecutive positions. 
Let $I_{ij} = I_iI_j$ denote the indicator random variable for the event that there are descents at both positions $i$ and $j$. 
Define the set $L_k^{oo} := L_k \setminus \{l_k^b, l_k^b - 1\}$, for $1 \leq k \leq I$. 

\begin{lemma}\label{descentprobtwoconsecutive} 
Let $\sigma$ be a permutation chosen uniformly at random from the double coset in $S_\lambda \bslash S_n / S_\mu$ indexed by $T$. Then
\bal
P(I_{i,i+1} = 1) = 
\begin{cases}
\frac{1}{6} & \text{$i \in L_k^{oo}$, for $k \in [I]$} \\ 
\frac{g(k)}{6\lambda_k(\lambda_k - 1)\lambda_{k+1}} & \text{if $i = l_k^b - 1$, for $k \in [I-1]$} \\ 
\frac{h(k)}{6\lambda_k\lambda_{k+1}(\lambda_{k+1} - 1)} & \text{if $i = l_k^b$, for $k \in [I-1]$},   
\end{cases}
\eal
where 
\bal
g(k) &:= \sum_{\ell=1}^J T_{k\ell}(T_{k\ell}-1)T_{k+1,\ell} \\ &\qquad + 3\sum_{1 \leq a < b \leq J} (T_{kb}(T_{kb}-1)T_{k+1,a} + T_{kb}T_{ka}T_{k+1,a}) \\ &\qquad + 6\sum_{1 \leq a < b < c \leq J} T_{kc}T_{kb}T_{k+1,a}, \\
h(k) &:=  \sum_{\ell=1}^J T_{k\ell}T_{k+1,\ell}(T_{k+1,\ell}-1) \\ &\qquad + 3\sum_{1 \leq a < b \leq J} (T_{kb}T_{k+1,b}T_{k+1,a} + T_{kb}T_{k+1,a}(T_{k+1,a}-1)) \\ &\qquad + 6\sum_{1 \leq a < b < c \leq J} T_{kc}T_{k+1,b}T_{k+1,a}.
\eal
\end{lemma}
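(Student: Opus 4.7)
The plan is to imitate the case analysis in the proof of Lemma \ref{descentprob}, now tracking three consecutive positions $i, i+1, i+2$ and the event $\{\sigma(i) > \sigma(i+1) > \sigma(i+2)\}$. In each case I would condition on the triple $(M_a, M_b, M_c)$ of $\mu$-blocks containing $\sigma(i), \sigma(i+1), \sigma(i+2)$, compute the probability of that block configuration via the same double-coset counting used for Lemma \ref{descentprob} (by decrementing the relevant $\lambda$, $\mu$, and $T$ factorials in $|S_\lambda \sigma S_\mu|$), and then multiply by the conditional probability that the three values are in strictly decreasing order. The conditional factor is supplied by the $S_\mu$-symmetry on each $M_j$ (values within $M_j$ are exchangeable) together with the $S_\lambda$-symmetry on each $L_k$ (arrangements of values at the $\lambda_k$ positions of $L_k$ are uniform).

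Case 1, $i \in L_k^{oo}$, is immediate: all three positions lie in $L_k$, so by $S_{\lambda_k}$-symmetry the ordered triple $(\sigma(i),\sigma(i+1),\sigma(i+2))$ is uniform over ordered triples of three distinct values assigned to $L_k$, giving probability $\frac{1}{3!}=\frac{1}{6}$.

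For Case 2, $i = l_k^b - 1$, with $i, i+1 \in L_k$ and $i+2 \in L_{k+1}$, I would split into four sub-cases indexed by the distribution of the three values among $\mu$-blocks: (i) all three in a single block $M_\ell$; (ii) two values in $M_b$ and one in $M_a$ with $a<b$, the solo value at position $i+2$; (ii$'$) two values in $M_a$ and one in $M_b$ with $a<b$, the solo value at position $i$; and (iii) three distinct blocks $c>b>a$. The joint probability of each block configuration is read off by the same double-coset counting as in Lemma \ref{descentprob}, depending on whether a given pair of positions lies entirely within $L_k$, entirely within $L_{k+1}$, or straddles the two. The conditional probability of strict decrease given the block configuration is $\frac{1}{6}$ in (i), $\frac{1}{2}$ in (ii) and (ii$'$) (by exchangeability of the two values sharing a $\mu$-block, with the relative ordering with the third value forced by the block indices), and $1$ in (iii). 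Collecting all four contributions over the common denominator $6\lambda_k(\lambda_k-1)\lambda_{k+1}$ reproduces the three sums defining $g(k)$. Case 3, $i = l_k^b$, is mirror-symmetric: the roles of $L_k$ and $L_{k+1}$ interchange, the common denominator becomes $6\lambda_k \lambda_{k+1}(\lambda_{k+1}-1)$, and an identical enumeration produces $h(k)$.

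The main obstacle is the bookkeeping: one must verify that sub-case (ii) lines up with the $3\sum_{a<b} T_{kb}(T_{kb}-1) T_{k+1,a}$ summand of $g(k)$, while (ii$'$) lines up with the $3\sum_{a<b} T_{kb} T_{ka} T_{k+1,a}$ summand, reflecting whether the two values sharing a $\mu$-block sit inside a single $\lambda$-block or split across the two $\lambda$-blocks. The symmetry factor of $\frac{1}{2}$ is identical in both situations, but the underlying block-configuration probabilities carry different denominators ($\lambda_k(\lambda_k-1)\lambda_{k+1}$ versus $\lambda_k(\lambda_k-1)\lambda_{k+1}$ obtained via different chains of conditionings), which must be cleared uniformly before collection; the analogous care is required in Case 3 for the two middle summands of $h(k)$.
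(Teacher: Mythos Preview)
Your proposal is correct and follows essentially the same approach as the paper: the paper splits into the identical three position cases (interior, sub-border $i=l_k^b-1$, border $i=l_k^b$) and, for the latter two, into the same four subcases on the $\mu$-block configuration of $(\sigma(i),\sigma(i+1),\sigma(i+2))$, computing each joint probability via the double-coset counting of Lemma~\ref{descentprob} and summing to obtain $g(k)$ and $h(k)$. Your subcases (i), (ii), (ii$'$), (iii) are exactly the paper's Subcases~1--4, and your conditional ordering factors $\tfrac{1}{6},\tfrac{1}{2},\tfrac{1}{2},1$ are precisely what produce the coefficients $1,3,3,6$ in $g(k)$ and $h(k)$ after clearing the denominator; the only slip is the harmless typo where you list the same denominator $\lambda_k(\lambda_k-1)\lambda_{k+1}$ twice.
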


\begin{proof}
Suppose that $i \in L_k$ for $k \in [I]$. There are three cases for the position of $i$:
\begin{itemize}
\item {\bf Case 1 (Interior):} $i \in L_k^{oo}$, for $k \in [I]$,
\item {\bf Case 2 (Sub-Border):} $i = l_k^b - 1$, for $k \in [I-1]$.
\item {\bf Case 3 (Border):} $i = l_k^b$, for $k \in [I-1]$.
\end{itemize}

Suppose $i \in L_k^{oo}$. It is straightforward that $P(I_{i,i+1} = 1) = \frac{1}{6}$ by a similar argument as in the proof of Lemma \ref{descentprob}. 

So suppose we are in Cases 2 and 3. For each case, there are four subcases for the values of $\sigma(i), \sigma(i+1), \sigma(i+2)$:
\begin{itemize}
\item {\bf Subcase 1:} $\sigma(i), \sigma(i+1), \sigma(i+2) \in M_\ell$, for $\ell \in [J]$
\item {\bf Subcase 2:} $\sigma(i), \sigma(i+1) \in M_b, \sigma(i+2) \in M_a$, for $1 \leq a < b \leq J$
\item {\bf Subcase 3:} $\sigma(i) \in M_b, \sigma(i+1), \sigma(i+2) \in M_a$, for $1 \leq a < b \leq J$
\item {\bf Subcase 4:} $\sigma(i) \in M_c, \sigma(i+1) \in M_b, \sigma(i+2) \in M_a$, for $1 \leq a < b < c \leq J$
\end{itemize}
Define $p_{xy} := P(I_{i, i+1} = 1, \text{ Case $x$}, \text{ Subcase $y$})$. 

We first consider Case $2$. By similar computations as in the proof of Lemma \ref{descentprob}, we get
\bal
p_{21} &= \frac{T_{k\ell}(T_{k\ell}-1)T_{k+1,\ell}}{6\lambda_k(\lambda_k-1)\lambda_{k+1}}, \qquad p_{22} = \frac{T_{kb}(T_{kb}-1)T_{k+1,a}}{2\lambda_k(\lambda_k-1)\lambda_{k+1}}, \\
p_{23} &= \frac{T_{kb}T_{ka}T_{k+1,a}}{2\lambda_k(\lambda_k-1)\lambda_{k+1}}, \qquad p_{24} = \frac{T_{kc}T_{kb}T_{k+1,a}}{\lambda_k(\lambda_k-1)\lambda_{k+1}}. \\
\eal
This gives
\bal
P(I_{i,i+1} = 1, \text{Case $2$}) &= \sum_{\ell = 1}^J p_{21} + \sum_{1 \leq a < b \leq J} (p_{22} + p_{23}) + \sum_{1 \leq a < b < c \leq J} p_{24} \\
&= \frac{g(k)}{6\lambda_k(\lambda_k - 1)\lambda_{k+1}}.
\eal

Similarly for Case $3$ we get
\bal
p_{31} &= \frac{T_{k\ell}T_{k+1,\ell}(T_{k+1,\ell}-1)}{6\lambda_k\lambda_{k+1}(\lambda_{k+1}-1)}, \qquad p_{32} = \frac{T_{kb}T_{k+1,b}T_{k+1,a}}{2\lambda_k\lambda_{k+1}(\lambda_{k+1}-1)}, \\
p_{33} &= \frac{T_{kb}T_{k+1,a}(T_{k+1,a}-1)}{2\lambda_k\lambda_{k+1}(\lambda_{k+1}-1)}, \qquad p_{34} = \frac{T_{kc}T_{k+1,b}T_{k+1,a}}{\lambda_k\lambda_{k+1}(\lambda_{k+1}-1)}, 
\eal
so that
\bal
P(I_{i,i+1} = 1, \text{Case $3$}) &= \sum_{\ell = 1}^J p_{31} + \sum_{1 \leq a < b \leq J} (p_{32} + p_{33}) + \sum_{1 \leq a < b < c \leq J} p_{34} \\
&= \frac{h(k)}{6\lambda_k\lambda_{k+1}(\lambda_{k+1} - 1)}. \qedhere
\eal
\end{proof}

The final lemma we need is the probability that two descents occur at positions $i$ and $j$, for $|j-i| \geq 2$. 
Note that this lemma shows that descents at indices $i$ and $j$ are independent except for the cases where $i$ and $j$ are in consecutive border indices. 

\begin{lemma}\label{doubledescentprob} 
Let $\sigma$ be a permutation chosen uniformly at random from the double coset in $S_\lambda \bslash S_n / S_\mu$ indexed by $T$. Then for $|j-i| \geq 2$, 
\bal
&P(I_{ij} = 1) = \\
&\begin{cases}
\frac{1}{4}, & \text{if $i,j \in L_k^{oo}$, for $1 \leq k \leq I$,} \\ & \text{or if $i \in L_\ell^o$ and $j \in L_r^o$, for $1 \leq \ell < r \leq I$;} \\
\frac{1}{4} - \frac{f(k)}{4\lambda_k\lambda_{k+1}}, & \text{if exactly one of $i$ or $j$ equals $l_k^b$, for $k \in [I-1]$;} \\
\left(\frac{1}{2} - \frac{f(\ell)}{2\lambda_\ell\lambda_{\ell+1}}\right)\left(\frac{1}{2} - \frac{f(r)}{2\lambda_r\lambda_{r+1}}\right), & \text{if $i = l_\ell^b$ and $j = l_r^b$, for $1 \leq \ell < r \leq I-1$}, \\ & \text{and $|r - \ell| \geq 2$}.
\end{cases}
\eal
If $i = l_k^b$ and $j = l_{k+1}^b$ for $k \in [I-2]$, then
\bal
&P(I_{ij} = 1) \\
&= \frac{\lambda_k}{4(\lambda_k - 1)} - \frac{f(k+1)}{4(\lambda_{k+1}-1)\lambda_{k+2}} - \frac{f(k)}{4\lambda_k(\lambda_{k+1}-1)} + \frac{f(k)f(k+1) - e(k)}{4\lambda_k \lambda_{k+1} (\lambda_{k+1}-1) \lambda_{k+2}},
\eal
where $f(k)$ is defined as in Lemma \ref{descentprob} and
\bal
e(k) &:= \sum_{\ell=1}^J T_{k\ell}T_{k+1,\ell}T_{k+2,\ell} \\
&\qquad + 2 \left( \sum_{1 \leq a < b \leq J} T_{kb}T_{k+1,a} T_{k+2,a} - \sum_{1 \leq c < d \leq J} T_{kd}T_{k+1,d} T_{k+2,c} \right) \\
&\qquad + 4\sum_{1 \leq c < a < b \leq J} T_{kb}T_{k+1,a} T_{k+2,c}.
\eal
\end{lemma}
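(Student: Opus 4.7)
The plan is to treat the four cases of the statement in turn.

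\textbf{Cases 1 and 2 (at least one interior index).} If at least one of $i, j$ is an interior index, say $i \in L_k^o$, I apply the swap argument from Lemma \ref{descentprob}: transposing $\sigma(i)$ with $\sigma(i+1)$ keeps $\sigma$ in the same double coset indexed by $T$ (because positions $i$ and $i+1$ both lie in $L_k$, every entry $T_{k'\ell}$ is preserved) and preserves the uniform distribution, while toggling $I_i$. The assumption $|j-i|\geq 2$ forces $\{j, j+1\}\cap\{i, i+1\}=\emptyset$, so the swap does not alter $I_j$. Hence $I_i$ and $I_j$ are independent, and combining with the marginals from Lemma \ref{descentprob} gives $\tfrac{1}{4}$ in case~1 and $\tfrac{1}{4}-\tfrac{f(k)}{4\lambda_k\lambda_{k+1}}$ in case~2.

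\textbf{Case 3 (non-consecutive borders).} When $i=l_\ell^b$ and $j=l_r^b$ with $r-\ell\geq 2$, the four relevant positions lie in the pairwise disjoint row blocks $L_\ell, L_{\ell+1}, L_r, L_{r+1}$. The same factorial-cancellation count used in Lemmas \ref{fixedpointprob} and \ref{descentprob} shows that, when four positions are placed in four different row blocks, the joint probability that their values have $M$-block types $(a_1, a_2, a_3, a_4)$ factorizes as $\prod_k T_{\text{row}(p_k),\,a_k}/\lambda_{\text{row}(p_k)}$, and, conditional on the types, the ordered tuple of specific values within each $M_a$ is uniform over distinct tuples. Hence each same-type comparison contributes an independent fair tie-break coin, which makes $I_i$ and $I_j$ independent; the product of marginals from Lemma \ref{descentprob} then yields the stated formula.

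\textbf{Case 4 (consecutive borders, the main case).} For $i=l_k^b$ and $j=l_{k+1}^b$, write $\alpha=l_k^b\in L_k$, $\beta=\alpha+1\in L_{k+1}$, $\gamma=l_{k+1}^b\in L_{k+1}$, $\delta=\gamma+1\in L_{k+2}$. I plan to partition on the $M$-block types $(t_\alpha, t_\beta, t_\gamma, t_\delta)$ of these four values and, for each tuple, compute the configuration probability by the same factorial cancellation as in Lemmas \ref{descentprob} and \ref{descentprobtwoconsecutive}, then multiply by the conditional descent probability, which equals $1$ when both pairs $(t_\alpha, t_\beta)$ and $(t_\gamma, t_\delta)$ are strictly descending, $1/2$ when exactly one pair is a tie, and $1/4$ when both are ties. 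The new feature, which breaks the clean independence pattern of case~3, is that $\beta$ and $\gamma$ are two distinct positions of the shared block $L_{k+1}$, so the joint type event at them produces $T_{k+1,\cdot}(T_{k+1,\cdot}-1)$ when $t_\beta=t_\gamma$ and $T_{k+1,t_\beta}T_{k+1,t_\gamma}$ otherwise, which is the source of the $\lambda_{k+1}-1$ denominators in the final formula. The main obstacle is consolidating the resulting multi-index sums: grouping contributions by the equality/ordering pattern of the four types, I expect the bilinear sums over the row-pairs $(k,k+1)$ and $(k+1,k+2)$ to telescope into $f(k)$ and $f(k+1)$ via the same identity $\bigl(\sum_\ell T_{k\ell}\bigr)\bigl(\sum_\ell T_{k+1,\ell}\bigr)=\sum_\ell T_{k\ell}T_{k+1,\ell}+2\sum_{a<b}T_{k+1,a}T_{kb}+f(k)$ used in the proof of Lemma \ref{descentprob}, leaving a trilinear residual that should match the definition of $e(k)$ and thereby produce the claimed formula.
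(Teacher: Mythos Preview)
Your proposal is correct and follows essentially the same route as the paper. In both arguments the first three cases are dispatched by the swap/symmetry and block-disjointness reasoning you describe, and for the consecutive-border case the paper likewise partitions according to the $M$-block types of $\sigma(i),\sigma(i{+}1),\sigma(j),\sigma(j{+}1)$ (it lists eight subcases according to the tie pattern in each pair and whether $t_\beta=t_\gamma$), computes each piece by the factorial-cancellation count, and then collapses the resulting bilinear sums into $f(k)$ and $f(k{+}1)$ via the identity you cite, with the leftover trilinear terms defining $e(k)$.
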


\begin{proof}
There are three cases for the positions of $i$ and $j$. If we are in the first case so that $i,j \in L_k^{oo}$ for some $1 \leq k \leq I$ or if $i \in L_\ell^o$ and $j \in L_r^o$ for $1 \leq \ell < r \leq I$, 
then it is clear that $P(I_{ij} = 1) = \frac{1}{4}$ by similar arguments as in the proofs of Lemmas \ref{descentprob} and \ref{descentprobtwoconsecutive}. 

So suppose that we are in the second or third case. Then 
\bal
P(I_{ij} = 1) = P(I_i = 1, I_j = 1) = P(I_i = 1 \mid I_j = 1)P(I_j = 1). 
\eal

Suppose we are in the second case, so that exactly one of $i$ or $j$ equals $l_k^b$ for some $k \in [N-1]$. Without loss of generality assume $j = l_k^b$, so that $i \in L_\ell^o$ for some $\ell \neq k$ or $i \in L_k^{oo}$. 
By Lemma \ref{descentprob}, $P(I_j = 1) = \frac{1}{2} - \frac{f(k)}{2\lambda_k \lambda_{k+1}}$. 
Also by Lemma \ref{descentprob}, $P(I_i = 1 \mid I_j = 1) = \frac{1}{2}$, since the event of a descent at in interior index is independent of descents in other positions. 
This case gives $P(I_{ij} = 1) = \frac{1}{2}\left( \frac{1}{2} - \frac{f(k)}{2\lambda_k \lambda_{k+1}} \right) = \frac{1}{4} - \frac{f(k)}{4\lambda_k\lambda_{k+1}}$.

Finally suppose we are in the third case so that $i = l_\ell^b$ and $j = l_r^b$, for $1 \leq \ell < r \leq I-1$. 
By Lemma \ref{descentprob}, the probability of a descent at the border index $l_\ell^b$ only depends on the values of $\{T_{\ell, m}\}_{1 \leq m \leq J}$, $\{T_{\ell+1, m}\}_{1 \leq m \leq J}$, $\lambda_\ell$, and $\lambda_{\ell + 1}$. 
It follows that if $|r - \ell| \geq 2$, then $P(I_i = 1 \mid I_j = 1) = P(I_i = 1)$, so that $P(I_{ij} = 1) = \left(\frac{1}{2} - \frac{f(\ell)}{2\lambda_\ell\lambda_{\ell+1}}\right)\left(\frac{1}{2} - \frac{f(r)}{2\lambda_r\lambda_{r+1}}\right)$.

Otherwise we have that $i = l_k^b$ and $j = l_{k+1}^b$ for some $k \in [I-2]$. In this case indices $i+1$ and $j$ are both in $L_{k+1}$. Suppose $\sigma(i) \in M_b$, $\sigma(i+1) \in M_a$, $\sigma(j) \in M_d$, and $\sigma(j+1) \in M_c$, for $a,b,c,d \in [J]$. 
There are eight subcases:
\begin{itemize}
\item {\bf Subcase 1:} $a = b = \ell, c = d = r, \ell \neq r$
\item {\bf Subcase 2:} $a = b = \ell, c = d = r, \ell = r$
\item {\bf Subcase 3:} $a < b, c = d = r, a \neq r$
\item {\bf Subcase 4:} $a < b, c = d = r, a = r$
\item {\bf Subcase 5:} $a = b = \ell, c < d, \ell \neq d$
\item {\bf Subcase 6:} $a = b = \ell, c < d, \ell = d$.
\item {\bf Subcase 7:} $a < b, c < d, a \neq d$
\item {\bf Subcase 8:} $a < b, c < d, a = d$.
\end{itemize}
Let $p_{x} := P(I_{ij} = 1, \text{ Subcase x})$. Then by computations similar to Lemmas \ref{descentprob} and \ref{descentprobtwoconsecutive} we get
\bal
p_1 &= \frac{T_{k\ell}T_{k+1,\ell}T_{k+1,r}T_{k+2,r}}{4\lambda_k \lambda_{k+1} (\lambda_{k+1}-1) \lambda_{k+2}}, \qquad p_2 = \frac{T_{k\ell}T_{k+1,\ell}(T_{k+1,\ell} - 1)T_{k+2,\ell}}{4 \lambda_k \lambda_{k+1} (\lambda_{k+1}-1) \lambda_{k+2}} \\
p_3 &= \frac{T_{kb}T_{k+1,a} T_{k+1,r} T_{k+2,r}}{2\lambda_k \lambda_{k+1} (\lambda_{k+1}-1) \lambda_{k+2}} \qquad p_4 = \frac{T_{kb}T_{k+1,r} (T_{k+1,r}-1) T_{k+2,r}}{2 \lambda_k \lambda_{k+1} (\lambda_{k+1}-1) \lambda_{k+2}}  \\
p_5 &= \frac{T_{k\ell}T_{k+1,\ell} T_{k+1,d} T_{k+2,c}}{2 \lambda_k \lambda_{k+1} (\lambda_{k+1}-1) \lambda_{k+2}} \qquad p_6 =  \frac{T_{k\ell}T_{k+1,\ell} (T_{k+1,\ell}-1) T_{k+2,c}}{2 \lambda_k \lambda_{k+1} (\lambda_{k+1}-1) \lambda_{k+2}}\\
p_7 &= \frac{T_{kb}T_{k+1,a} T_{k+1,d} T_{k+2,c}}{\lambda_k \lambda_{k+1} (\lambda_{k+1}-1) \lambda_{k+2}}, \qquad p_8 =  \frac{T_{kb}T_{k+1,a} (T_{k+1,a}-1) T_{k+2,c}}{\lambda_k \lambda_{k+1} (\lambda_{k+1}-1) \lambda_{k+2}} 
\eal
Summing over the subcases and simplifying gives
\bal
&P(I_{ij} = 1)  \\
&= \frac{1}{4\lambda_k \lambda_{k+1}(\lambda_{k+1}-1)\lambda_{k+2}} \left[ \left(\sum_{\ell = 1}^J T_{k\ell}T_{k+1,\ell} + 2 \sum_{1 \leq a < b \leq J} T_{kb}T_{k+1,a}\right) \right. \\
&\qquad \times \left. \left(\sum_{r = 1}^J T_{k+1,r}T_{k+2,r} + 2 \sum_{1 \leq c < d \leq J} T_{k+1,d}T_{k+2,c}\right) \right. \\
&\qquad - \left. \sum_{\ell=1}^J T_{k\ell}T_{k+1,\ell}T_{k+2,\ell} - 2 \left( \sum_{1 \leq a < b \leq J} T_{kb}T_{k+1,a} T_{k+2,a} - \sum_{1 \leq c < d \leq J} T_{kd}T_{k+1,d} T_{k+2,c} \right) \right. \\
&\qquad - \left. 4\sum_{1 \leq c < a < b \leq J} T_{kb}T_{k+1,a} T_{k+2,c} \right] \\
&= \frac{(\lambda_k\lambda_{k+1} - f(k))(\lambda_{k+1}\lambda_{k+2} - f(k+1)) - e(k)}{4\lambda_k \lambda_{k+1} (\lambda_{k+1}-1) \lambda_{k+2}} \\
&= \frac{\lambda_k}{4(\lambda_k - 1)} - \frac{f(k+1)}{4(\lambda_{k+1}-1)\lambda_{k+2}} - \frac{f(k)}{4\lambda_k(\lambda_{k+1}-1)} + \frac{f(k)f(k+1) - e(k)}{4\lambda_k \lambda_{k+1} (\lambda_{k+1}-1) \lambda_{k+2}}. \qedhere
\eal
\end{proof}

Using the previous lemmas, we can now compute the mean and asymptotic variance of the number of descents. 

\begin{proposition}\label{descentmeanvariance} 
Let $\sigma$ be a permutation chosen uniformly at random from the double coset in $S_\lambda \bslash S_n / S_\mu$ indexed by $T$, where $I = o(n)$. Then
\bal 
E(\des(\sigma)) &= \frac{n-1}{2} - \frac{1}{2} \sum_{k=1}^{I-1} \frac{f(k)}{\lambda_k \lambda_{k+1}} \\
\Var(\des(\sigma)) &\asymp \frac{n}{12}, 
\eal
where $f(k) := \sum_{1 \leq a < b \leq J} (T_{ka}T_{k+1,b} - T_{k+1,a}T_{kb})$, for $k \in [I-1]$
\end{proposition}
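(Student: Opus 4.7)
The plan is to compute the mean by linearity directly from Lemma \ref{descentprob} and the variance by the bilinear expansion $\Var(\des(\sigma)) = \sum_i \Var(I_i) + 2\sum_{i<j}\cov(I_i,I_j)$, exploiting Lemmas \ref{descentprobtwoconsecutive} and \ref{doubledescentprob} to dispatch most covariance terms as either identically zero or collectively $O(I) = o(n)$.

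For the mean, I would split the sum $\sum_{i=1}^{n-1} P(I_i=1)$ according to whether $i$ is an interior index or a border index $l_k^b$. There are $\sum_{k=1}^I (\lambda_k - 1) = n - I$ interior positions, each contributing $1/2$, and $I - 1$ border positions whose probabilities are given by Lemma \ref{descentprob}. Adding $\frac{n-I}{2} + \frac{I-1}{2} - \frac{1}{2}\sum_{k=1}^{I-1}\frac{f(k)}{\lambda_k\lambda_{k+1}}$ collapses to the stated formula.

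For the variance, the key observation is that Lemma \ref{doubledescentprob} shows $\cov(I_i, I_j) = 0$ for $|j - i| \geq 2$ except in the single exceptional case $i = l_k^b, j = l_{k+1}^b$ where the two borders are adjacent blocks apart; in all other regimes, $P(I_{ij}=1)$ factors exactly as $P(I_i=1)P(I_j=1)$. Thus the long-range covariance contribution is bounded by $O(I)$. The diagonal terms contribute $\frac{n-I}{4}$ from the interior indicators (each with variance $1/4$) plus at most $\frac{I-1}{4}$ from the border indicators, so $\sum_i \Var(I_i) = \frac{n}{4} + O(I)$.

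The remaining adjacent covariances $\cov(I_i, I_{i+1})$ carry the nontrivial main term. Using Lemma \ref{descentprobtwoconsecutive} on the interior case $i \in L_k^{oo}$ gives $P(I_{i,i+1}=1) = 1/6$ against $P(I_i)P(I_{i+1}) = 1/4$, contributing covariance $-1/12$ per pair; there are $\sum_k \max(\lambda_k - 2, 0) = n - O(I)$ such pairs, yielding $2 \cdot (n-O(I)) \cdot (-1/12) = -\frac{n}{6} + O(I)$. The sub-border and border cases of Lemma \ref{descentprobtwoconsecutive}, being only $O(I)$ in count and with bounded covariances, contribute $O(I)$. Combining,
\begin{align*}
\Var(\des(\sigma)) = \frac{n}{4} - \frac{n}{6} + O(I) = \frac{n}{12} + O(I),
\end{align*}
and since $I = o(n)$, this gives $\Var(\des(\sigma)) \asymp n/12$. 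The main obstacle is purely bookkeeping: keeping careful track of which probabilities factor and which do not across the many sub-cases of Lemma \ref{doubledescentprob}, and verifying that every boundary term truly fits into an $O(I)$ pool rather than contributing at the $\Theta(n)$ scale. Once one sees that every border-related discrepancy is counted at most $O(I)$ times with a uniformly bounded per-term magnitude, the asymptotics collapse cleanly.
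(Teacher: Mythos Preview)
Your proposal is correct and follows essentially the same approach as the paper: the same decomposition of the variance into diagonal, adjacent, and long-range covariance terms, the same use of Lemmas~\ref{descentprob}, \ref{descentprobtwoconsecutive}, and \ref{doubledescentprob} to show that all border-related discrepancies are $O(I)$ with uniformly bounded per-term magnitude, and the same arithmetic $\frac{n}{4} - \frac{n}{6} = \frac{n}{12}$ for the main term. The paper additionally records the explicit inequalities $\bigl|f(k)/(\lambda_k\lambda_{k+1})\bigr| \leq 1$, $\bigl|g(k)/(\lambda_k(\lambda_k-1)\lambda_{k+1})\bigr| \leq 1$, etc., which you should state when you write the full proof, but the strategy is identical.
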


\begin{proof}
The mean immediately follows from linearity of expectation and Lemma \ref{descentprob}, 
\bal 
E(\des(\sigma)) &= \sum_{k=1}^{n-1} E(I_k) \\
&= \sum_{k=1}^I \sum_{i \in L_k^o} P(I_i = 1, i \in L_k^o) + \sum_{k=1}^{I-1} P(I_i = 1, i = l_k^b) \\ 
&= \sum_{k=1}^I \frac{\lambda_k - 1}{2} + \sum_{k=1}^{I-1} \left( \frac{1}{2} - \frac{f(k)}{2\lambda_k\lambda_{k+1}} \right) \\
&=  \frac{n-1}{2} - \frac{1}{2} \sum_{k=1}^{I-1} \frac{f(k)}{\lambda_k \lambda_{k+1}}. 
\eal

Next, we write the variance as
\bal
\Var(\des(\sigma)) = \sum_{i=1}^{n-1} \Var(I_i) + 2\sum_{i=1}^{n-2} \cov(I_i, I_{i+1}) + 2\sum_{|j-i| \geq 2} \cov(I_i, I_j).
\eal

By Lemma \ref{descentprob},
\bal
\sum_{i=1}^{n-1} \Var(I_i) &= \sum_{i=1}^{n-1} E(I_i) - [E(I_i)]^2 = \sum_{k=1}^I \sum_{i \in L_k^o} \frac{1}{4} + \sum_{k=1}^{I-1} \left( \frac{1}{4} - \frac{f(k)^2}{4\lambda_k^2 \lambda_{k+1}^2} \right) \\
&= \frac{n-1}{4} - \frac{1}{4}\sum_{k=1}^{I-1} \frac{f(k)^2}{\lambda_k^2 \lambda_{k+1}^2}.
\eal
The contribution from the sum is a constant times $I$, since
\bal
\left|\frac{f(k)}{\lambda_k \lambda_{k+1}}\right| = \left| \frac{\sum_{1 \leq a < b \leq J} (T_{ka}T_{k+1,b} - T_{k+1,a}T_{kb})}{ \left( \sum_{i=1}^J T_{ki}\right)\left( \sum_{i=1}^J T_{k+1,i} \right) } \right| \leq 1,
\eal
for all $1 \leq k \leq I$. Since $I = o(n)$, this gives
\bal
\sum_{i=1}^{n-1} \Var(I_i) &\asymp \frac{n}{4}.
\eal

Next, by Lemmas \ref{descentprob} and \ref{descentprobtwoconsecutive}, 
\bal
&\sum_{i=1}^{n-2} \cov(I_i, I_{i+1}) = \sum_{i=1}^{n-2} E(I_{i,i+1}) - E(I_i)E(I_{i+1}) \\
&= \sum_{k=1}^I \sum_{i \in L_k^{oo}} \left( \frac{1}{6} - \frac{1}{4} \right) + \sum_{k=1}^{I-1} \left[ \frac{g(k)}{6\lambda_k(\lambda_k -1)\lambda_{k+1}} + \frac{h(k)}{6\lambda_k\lambda_{k+1}(\lambda_{k+1}-1)} \right. \\
&\qquad \left. - \left( \frac{1}{2} - \frac{f(k)}{2\lambda_k\lambda_{k+1}} \right) \right] \\
&= -\frac{n-2I}{12} - \frac{I-1}{2} \\
&\qquad +\frac{1}{6}\sum_{k=1}^{I-1} \left( \frac{g(k)}{\lambda_k(\lambda_k -1)\lambda_{k+1}} + \frac{h(k)}{\lambda_k\lambda_{k+1}(\lambda_{k+1}-1)} + \frac{3f(k)}{\lambda_k\lambda_{k+1}} \right).
\eal
Again, the contribution from the sum is a constant times $I$, since 
\bal
\left|\frac{g(k)}{\lambda_k(\lambda_k -1)\lambda_{k+1}} \right| \leq 1, \quad \text{and} \quad \left| \frac{h(k)}{\lambda_k\lambda_{k+1}(\lambda_{k+1}-1)} \right| \leq 1,
\eal
for all $1 \leq k \leq I$, using the formulas for $g(k)$ and $h(k)$ from Lemma \ref{descentprobtwoconsecutive}. 
This gives
\bal
\sum_{i=1}^{n-2} \cov(I_i, I_{i+1}) &\asymp -\frac{n}{12}
\eal

Finally by Lemma \ref{doubledescentprob}, $E(I_{i,j}) = E(I_i)E(I_j)$ for all $i,j$ with $|j-i| \geq 2$, except for the case when $i = l_k^b$ and $j = l_{k+1}^b$ for $k \in [I-2]$. Then
\bal
\sum_{|j-i| \geq 2} \cov(I_i, I_j) &= \sum_{k=1}^{I-2} \left[\frac{\lambda_k}{4(\lambda_k - 1)} - \frac{f(k+1)}{4(\lambda_{k+1}-1)\lambda_{k+2}} - \frac{f(k)}{4\lambda_k(\lambda_{k+1}-1)} \right. \\
&\qquad + \left. \frac{f(k)f(k+1) - e(k)}{4\lambda_k \lambda_{k+1} (\lambda_{k+1}-1) \lambda_{k+2}}\right].
\eal
The contribution from the sum is a constant times $I$, since
\bal
\left| \frac{f(k)f(k+1) - e(k)}{4\lambda_k \lambda_{k+1} (\lambda_{k+1}-1) \lambda_{k+2}} \right| \leq 1
\eal
for all $k \in [I-2]$. This gives
\bal
\sum_{|j-i| \geq 2} \cov(I_i, I_j) = o(n). 
\eal

Summing over the cases above yields
\bal
\Var(\des(\sigma)) &\asymp \frac{n}{12}. \qedhere
\eal
\end{proof}

\subsection{Central Limit Theorem for Descents} \label{CLTDescents}

In this section we prove the central limit theorem for the number of descents. We start by constructing a dependency graph, $L$, on the family of random variables $\{I_i : 1 \leq i \leq n-1\}$. 

Define the vertex set, $V$, and edge set, $E$, as
\bal
\begin{cases}
V = [n-1] \\
E = \{(i, j) : \text{if $j = i+1$, or if $j=i-1$, or if $i = l_k^b$ and $j = l_{k+1}^b$ for $k \in [I-2]$}\}.
\end{cases}
\eal
The vertices represent each indicator random variable $I_i$. By Lemmas \ref{descentprob}, \ref{descentprobtwoconsecutive}, and \ref{doubledescentprob}, 
the indicators $I_i$ and $I_j$ are dependent if the positions $(i,i+1)$ and $(j,j+1)$ are not disjoint or if $i = l_k^b$ and $j = l_{k+1}^b$ for $k \in [I-2]$, and independent otherwise. 
Thus the graph above is a dependency graph for $\{I_i : 1 \leq i \leq n-1\}$.

With this dependency graph construction, we can now prove Theorem \ref{CLTdescents}. 

\begin{proof}[Proof of Theorem \ref{CLTdescents}]
Consider the family of indicators $\{I_i : 1 \leq i \leq n-1\}$ and the dependency graph, $L$, constructed above. Let $D - 1$ be the maximal degree of $L$. Note that $|V| = n-1$. 
It is clear by construction that $D - 1 = 4$, so that $D = 5$. We also have that $|I_i - E(I_i)| \leq 1$ for all $i \in [n-1]$, and so we set $B := 1$. From Proposition \ref{descentmeanvariance}, 
we have that $\sigma_n^2 := \Var(\des(\sigma)) \asymp \frac{n}{12}$. 

Therefore by Theorem \ref{SteinDependencyGraph}, 
\bal
d_K(W_n, Z) &= O\left( \frac{8\cdot 5^{3/2} \sqrt{n-1}}{\frac{n}{12}} + \frac{8\cdot 5^2 (n-1)}{\left( \frac{n}{12} \right)^{3/2}} \right) = O(n^{-1/2}),
\eal
where $W_n = \frac{\des(\sigma) - E(\des(\sigma))}{\sigma_n}$ and $Z$ is a standard normal random variable. Thus $d_K(W_n, Z) \to 0$ as $n \to \infty$, so that $W_n \xrightarrow{d} Z$. 
\end{proof}


\section{Generalized Descents for Fixed $d$} \label{GeneralizedDescents} 

In this section we prove Theorem \ref{CLTddescents}, the central limit theorem for $d$-descents when $d$ is a fixed positive integer. 

Let $\sigma \in S_n$ be chosen uniformly at random from the double coset in $S_\lambda \bslash S_n / S_\mu$ indexed by $T$. Let $\des_d(\sigma)$ be the number of $d$-descents of $\sigma$. 
Observe that for $n \geq 2$ and $1 \leq d < n$, the number of ordered pairs $(i,j) \in [n]^2$ such that $i < j < i +d$ is $N_n = d(n-d) + \binom{d}{2}$. We order these pairs in lexicographic order and index them by $\{(i_k, j_k)\}_{k=1}^{N_n}$, so that 
\bal
\des_d(\sigma) &= \sum_{k=1}^{N_n} Y_k
\eal
where $Y_k = \I_{\{\sigma(i_k) > \sigma(j_k)\}}$ is the indicator random variable that $\sigma$ has a $d$-descent at $(i_k, j_k)$. 

\subsection{Mean and Variance of $d$-Descents} \label{MeanVarDDescents}

The computation for the mean and variance of $d$-descents use the probabilities computed in Section \ref{MeanVarDescents} for the case of descents. 
Using essentially the same proof, we find that the probability of a $d$-descent is $\frac{1}{2}$ when both indices $i,j$ are in the same block, and is $\frac{1}{2} - \frac{f(k)}{2\lambda_k \lambda_{k+1}}$ when $i$ is in $L_k$ and $j$ is in $L_{k+1}$. 
Since the proof is similar to the descents case, we will not include all the technical details but we will describe the key ideas. 

\begin{proposition}\label{ddescentmeanvariance} 
Let $\sigma$ be a permutation chosen uniformly at random from the double coset in $S_\lambda \bslash S_n / S_\mu$ indexed by $T$, where $I = o(n)$. Let $d \leq \frac{\lambda_I}{2}$ be a fixed constant. Then
\bal 
E(\des_d(\sigma)) &= \frac{nd - \binom{d+1}{2}}{2} - \frac{\binom{d+1}{2}}{2} \sum_{k=1}^{I-1} \frac{f(k)}{\lambda_k \lambda_{k+1}}  \\
\Var(\des(\sigma)) &\asymp nd, 
\eal
where $f(k) := \sum_{1 \leq a < b \leq J} (T_{ka}T_{k+1,b} - T_{k+1,a}T_{kb})$, for $k \in [I-1]$
\end{proposition}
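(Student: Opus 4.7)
The plan is to adapt the descent computations of Section \ref{MeanVarDescents} to arbitrary pairs $(i,j)$ with $i < j \leq i+d$, exploiting the hypothesis $d \leq \lambda_I/2$ which guarantees that every such pair has its two endpoints either in a single block $L_k$ or in two consecutive blocks $L_k, L_{k+1}$ (never spanning three or more).

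First I would establish the single-pair probabilities by the same swap argument as in Lemma \ref{descentprob}. For a same-block pair, swapping $\sigma(i_k)$ and $\sigma(j_k)$ is a distribution-preserving involution on the double coset that toggles $Y_k$, giving $P(Y_k = 1) = 1/2$. For a cross-block pair with $i_k \in L_k$ and $j_k \in L_{k+1}$, the joint law of $(\sigma(i_k), \sigma(j_k))$ depends only on the row counts $T_{k, \cdot}$ and $T_{k+1, \cdot}$ (not on the positions within those blocks nor on $j_k - i_k$), so the calculation reduces verbatim to that of the border case in Lemma \ref{descentprob}, producing $P(Y_k = 1) = \frac{1}{2} - \frac{f(k)}{2\lambda_k\lambda_{k+1}}$. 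Next I would count pairs: for each boundary $k \in [I-1]$, the cross-block pairs have $i \in \{l_k^b - d + 1, \ldots, l_k^b\}$ and $j \in \{l_k^b + 1, \ldots, i+d\}$, yielding $1 + 2 + \cdots + d = \binom{d+1}{2}$ pairs per boundary. Since $N_n = dn - \binom{d+1}{2}$, linearity of expectation gives
\begin{align*}
E(\des_d(\sigma)) = \tfrac{1}{2}\bigl(N_n - (I-1)\tbinom{d+1}{2}\bigr) + \tbinom{d+1}{2}\sum_{k=1}^{I-1} \left(\tfrac{1}{2} - \tfrac{f(k)}{2\lambda_k\lambda_{k+1}}\right),
\end{align*}
which simplifies to the claimed mean.

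For the variance, I would decompose $\Var(\des_d(\sigma)) = \sum_k \Var(Y_k) + 2\sum_{k < l} \cov(Y_k, Y_l)$. The diagonal sum equals $N_n/4$ up to an $O(I)$ correction coming from the cross-block pairs (where $\Var(Y_k) = 1/4 - f(k)^2/(4\lambda_k^2\lambda_{k+1}^2)$), and hence contributes $\asymp nd$ since $I = o(n)$ and $d$ is fixed. For the off-diagonal sum, an analogue of Lemmas \ref{descentprobtwoconsecutive} and \ref{doubledescentprob}, computing joint probabilities for triples and quadruples of positions, shows that $\cov(Y_k, Y_l)$ vanishes unless the two pairs share a position or straddle a common block boundary. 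Each $Y_k$ therefore has only $O(d)$ dependent neighbors, so the covariance sum is $O(N_n \cdot d) = O(nd^2) = O(n)$ for fixed $d$. In the interior of a block, the conditional distribution of the values $\sigma|_{L_k}$ is uniform on a coset, and so the bulk covariances match those computed for $d$-descents of a uniformly random permutation; the block-boundary corrections are of size $O(I d^2) = o(n)$.

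The main obstacle is the variance lower bound, since the covariance sum is \emph{a priori} of the same order $O(nd)$ as the diagonal sum and could in principle cancel the leading term. The cleanest resolution is to reduce to the uniform case: the block-swap invariance shows that the variance of $\des_d(\sigma)$ differs from the corresponding uniform variance on $S_n$ (which is $\Theta(nd)$, cf.\ Bona \cite{Bon08}) only through the $O(I d^2)$ boundary corrections identified above, which are $o(n)$ under the hypothesis $I = o(n)$. This pins down $\Var(\des_d(\sigma)) \asymp nd$, which is exactly what is needed to feed into Theorem \ref{SteinDependencyGraph} for the subsequent CLT.
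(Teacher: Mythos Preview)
Your approach mirrors the paper's closely: both compute the mean by counting cross-boundary versus interior pairs with the probabilities from Lemma \ref{descentprob}, and both handle the variance by isolating the bulk (within-block) contribution---where the relative-order distribution is that of a uniform permutation---from boundary corrections that are $o(n)$ when $I = o(n)$ and $d$ is fixed. The paper invokes Pike \cite{Pik11} for the within-block covariances and sums block by block; your reduction to the full uniform $S_n$ variance via Bona \cite{Bon08} is an equivalent repackaging of the same idea.

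One small omission: your dependency criterion (``share a position or straddle a common block boundary'') misses the case where $Y_k$ straddles the boundary between $L_a$ and $L_{a+1}$ while $Y_\ell$ straddles the boundary between $L_{a+1}$ and $L_{a+2}$. These share no position and no common boundary, yet both depend on the arrangement inside $L_{a+1}$ and hence are correlated---this is exactly the analogue of the $i = l_k^b$, $j = l_{k+1}^b$ case treated in Lemma \ref{doubledescentprob}. The paper counts such pairs as contributing $O(Id^4)$ rather than your $O(Id^2)$, but for fixed $d$ both are $O(I) = o(n)$ and the conclusion $\Var(\des_d(\sigma)) \asymp nd$ is unaffected.
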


\begin{proof} 
For the expected value, we only need to consider $d$-descents which are fully contained within a $\lambda$-block and those which cross the border between two adjacent $\lambda$-blocks. 
Observe that the number of $d$-descents fully contained in $L_k$ is $d(\lambda_k - d) + \binom{d}{2}$ for all $k \in [I]$, and the number of $d$-descents which cross the border between $L_k$ and $L_{k+1}$ is $\binom{d+1}{2}$ for all $k \in [I-1]$. 
Thus by linearity of expectation and Lemma \ref{descentprob},
\bal
E(\des_d(\sigma)) &= \sum_{k=1}^{N_n} E(Y_k) \\
&= \sum_{k=1}^{I} \frac{d(\lambda_k-d) + \binom{d}{2}}{2} + \sum_{k=1}^{I-1} \binom{d+1}{2} \left( \frac{1}{2} - \frac{f(k)}{2\lambda_k \lambda_{k+1}} \right) \\
&= \frac{nd - \binom{d+1}{2}}{2} - \frac{\binom{d+1}{2}}{2} \sum_{k=1}^{I-1} \frac{f(k)}{\lambda_k \lambda_{k+1}}
\eal

Next, the variance can be written as
\bal
\Var(\des_d(\sigma)) = \sum_{k=1}^{N_n} \Var(Y_k) + 2\sum_{1 \leq k < \ell \leq N_n} \cov(Y_k, Y_\ell). 
\eal

The first sum is computed to be
\bal
\sum_{k=1}^{N_n} \Var(Y_k) &= \sum_{k=1}^{N_n} E(Y_k) - [E(Y_k)]^2 \\
&= \sum_{k=1}^I \frac{d(\lambda_k - d) + \binom{d}{2}}{4} + \sum_{k=1}^{I-1} \binom{d+1}{2}\left( \frac{1}{4} - \frac{f(k)^2}{4\lambda_k^2 \lambda_{k+1}^2} \right) \\
&= \frac{nd - \binom{d+1}{2}}{4} - \frac{\binom{d+1}{2}}{4} \sum_{k=1}^{I-1} \frac{f(k)^2}{\lambda_k^2 \lambda_{k+1}^2} \\
&\asymp \frac{nd}{4},
\eal
since $|\frac{f(k)}{\lambda_k \lambda_{k+1}}| \leq 1$, $I = o(n)$, and $d$ is fixed. 

Now we compute the sum of covariances. We split this into several cases. First consider indicators $Y_k, Y_\ell$ such that $i_k, j_k, i_\ell, j_\ell$ are all in the same block. 
The covariance computation within each block proceeds as in the random permutations case, as computed by Pike \cite{Pik11}, then we sum over all blocks. 
In this case we note that when $\{i_k, j_k\} \cap \{i_\ell, j_\ell\} = \emptyset$, $\cov(Y_k, Y_\ell) = 0$. Thus the only nonzero covariance terms correspond to cases where $i_k = i_\ell$, $j_k = j_\ell$, and $j_k = i_\ell$. 
For the cases $i = i_k = i_\ell$ and $j = j_k = j_\ell$, we have that $E(Y_k Y_\ell) = \frac{1}{3}$, so that $\cov(Y_k, Y_\ell) = \frac{1}{3} - \frac{1}{4} = \frac{1}{12}$. For the case $i_k < j_k = m = i_\ell < j_\ell$, we have that $E(Y_K Y_\ell) = \frac{1}{6}$. 
Summing over all such terms, the contribution to the covariance is of order $nd$. 

Next, consider indicators $Y_k, Y_\ell$ which occur in different blocks with indices fully contained in their respective blocks. That is, $i_k, j_k \in L_a$ and $i_\ell, j_\ell \in L_b$ where $a \neq b$. 
Then the indicators are independent since we have that $E(Y_k Y_\ell) = E(Y_k) E(Y_\ell)$, and so $\cov(Y_k, Y_\ell) = 0$ in this case. 

It remains to consider indicators $Y_k, Y_\ell$ such that at least one of the pairs $(i_k, j_k)$ or $(i_\ell, j_\ell)$ cross between two consecutive blocks. 
If $\{i_k, j_k\} \cap \{i_\ell, j_\ell\} = \emptyset$, we have that $Y_k, Y_\ell$ are independent unless $i_k, i_\ell \in L_a$ and $j_k, j_\ell \in L_b$ or $i_k \in L_a$, $j_k, i_\ell \in L_{a+1}$, and $j_\ell \in L_{a+2}$. 
The sum of covariances of these terms is of order $d^4 I = o(n)$. To see this, note that in this case $E(Y_k Y_\ell) \asymp \frac{1}{4}$, so that $\cov(Y_k, Y_\ell)$ is a constant independent of $n$. 
There are at most $d^4$ such pairs which cross between two blocks $L_a$ and $L_{a+1}$, and at most $d^4$ pairs such that one $d$-descent crosses $L_a, L_{a+1}$ and the other crosses $L_{a+1}, L_{a+2}$. 
Otherwise if $\{i_k, j_k\} \cap \{i_\ell, j_\ell\} \neq \emptyset$, then $Y_k, Y_\ell$ are no longer independent, but by a similar argument as above, the contribution to the sum of covariances is still a constant times $I$. 
Thus the total contribution from these cases is $o(n)$. 

Combining the above cases, we find that $\Var(\des_d(\sigma)) \asymp nd$. 
\end{proof}

We remark that the condition $d \leq \frac{\lambda_I}{2}$ ensures that every $d$-descent occurs at indices which lie at most between two consecutive blocks. 
The regime where $d > \frac{\lambda_I}{2}$ is combinatorially more difficult and so we do not pursue it in the current paper, except for the case of inversions, $d = n-1$, which is considered in the next section. 

\subsection{Central Limit Theorem for $d$-Descents} \label{CLTDDescents}

We now use the method of dependency graphs to prove the central limit theorem for $d$-descents. 

\begin{proof}[Proof of Theorem \ref{CLTddescents}]
Consider the family of indicators $\{Y_k : 1 \leq k \leq N_n\}$ and the dependency graph, $L$, whose vertex set is $[N_n]$ indexed by the indicators $\{Y_k\}$ and edge set $E$ such that $(i,j)$ is an edge if $Y_i$ and $Y_j$ are dependent. 
Note that $|V| = N_n \leq d(n-d) + \binom{d}{2} \leq nd$. 

Let $D - 1$ be the maximal degree of $L$. The maximal degree is attained by indicators which correspond to $d$-descents which cross the border separating $L_k$ and $L_{k+1}$ for some $2 \leq k \leq I-2$. 
That is, indicators $Y_k$ such that $i_k \in L_a$ and $j_k \in L_{a+1}$. 
Observe that such indicators are dependent on all indicators which cross between $L_{a-1}$ and $L_a$, 
all indicators which cross between $L_{k+1}$ and $L_{k+2}$, and all indicators which contain either $i_k$ or $j_k$ as one of the indices which define their $d$-descent. 
Thus $D-1 \leq 4d + 2\binom{d+1}{2}$, so that $D \asymp d^2$. 

Therefore by Theorem \ref{SteinDependencyGraph}, 
\bal
d_K(W_n, Z) &= O\left( \frac{8\cdot (d^2)^{3/2} \sqrt{nd}}{nd} + \frac{8\cdot (d^2)^2 (nd)}{\left( nd\right)^{3/2}} \right) = O(n^{-1/2}),
\eal
where $W_n = \frac{\des_d(\sigma) - E(\des_d(\sigma))}{\sigma_n}$ and $Z$ is a standard normal random variable. Thus $d_K(W_n, Z) \to 0$ as $n \to \infty$, so that $W_n \xrightarrow{d} Z$. 
\end{proof}


\section{Inversions} \label{Inversions} 

Let $\sigma \in S_n$ be chosen uniformly at random from the double coset in $S_\lambda \bslash S_n / S_\mu$ indexed by $T$ and let $\inv(\sigma)$ be the number of inversions of $\sigma$, so that 
\bal
\inv(\sigma) = \sum_{1 \leq i < j \leq n} J_{ij},
\eal
where $J_{ij} = \I_{\{\sigma(i) > \sigma(j)\}}$. 

\subsection{Mean and Variance of Inversions} 

We begin by stating a lemma which gives the probability of an inversion. 

\begin{lemma} \label{inversionprob}
Let $\sigma$ be a permutation chosen uniformly at random from the double coset in $S_\lambda \bslash S_n / S_\mu$ indexed by $T$. Then
\bal
P(I_i = 1) = \begin{cases} 
\frac{1}{2} & \text{if $i,j \in L_k$, for $k \in [I]$},  \\ 
\frac{1}{2} - \frac{f(\ell, r)}{2\lambda_\ell \lambda_r} & \text{if $i \in L_\ell$ and $j \in L_r$, for $1 \leq \ell < r \leq I$},
\end{cases}
\eal
where $f(\ell,r) := \sum_{1 \leq a < b \leq J} (T_{\ell a} T_{r b} - T_{r a} T_{\ell b})$, for $1 \leq \ell < r \leq I$. 
\end{lemma}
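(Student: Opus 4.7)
The plan is to prove the two cases separately, following the same template as Lemma \ref{descentprob}. The key simplification is that, for inversions, what matters is only which $\lambda$-blocks contain $i$ and $j$, not whether they are adjacent or how far apart they are, since the event $\{\sigma(i) > \sigma(j)\}$ depends only on the values $\sigma(i), \sigma(j)$ and not on the positions $i,j$ directly.

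For Case 1 ($i,j \in L_k$): I would use a symmetry argument. The transposition $(i\ j) \in S_\lambda$, so $\sigma \mapsto \sigma \circ (i\ j)$ is a bijection on the double coset $S_\lambda \sigma S_\mu$. This bijection swaps $\sigma(i)$ and $\sigma(j)$ and therefore pairs up permutations with $\sigma(i)>\sigma(j)$ with those having $\sigma(i)<\sigma(j)$. Hence $P(J_{ij}=1)=\tfrac12$.

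For Case 2 ($i \in L_\ell$, $j \in L_r$, $\ell<r$): I would condition on the pair of $\mu$-blocks $(M_s,M_t)$ containing $\sigma(i),\sigma(j)$ and split into subcases exactly as in the proof of Lemma \ref{descentprob}. When $\sigma(i),\sigma(j) \in M_s$ for some common $s$, the values lie in the same $\mu$-block, so $\sigma \mapsto \sigma'$ with $\sigma'$ obtained by swapping the two values gives a bijection on the subset of the coset satisfying that condition, yielding conditional probability $\tfrac12$ for an inversion. When $\sigma(i) \in M_s$, $\sigma(j) \in M_t$ with $s \neq t$, the inversion occurs deterministically when $s>t$ and never when $s<t$. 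The direct contingency-table counts (identical in form to those carried out in Case 2, Subcases 1 and 2 of Lemma \ref{descentprob}) give
\bal
P(\sigma(i) \in M_s,\,\sigma(j) \in M_t) \;=\; \frac{T_{\ell s}\,T_{rt}}{\lambda_\ell\,\lambda_r},
\eal
since the positions $i,j$ lie in different $\lambda$-blocks and the calculation depends only on the counts $T_{\ell s}, T_{rt}$, not on $|j-i|$. Summing,
\bal
P(J_{ij}=1) \;=\; \frac{1}{2\lambda_\ell\lambda_r}\sum_{s=1}^J T_{\ell s}T_{rs} \;+\; \frac{1}{\lambda_\ell\lambda_r}\sum_{1 \le t < s \le J} T_{\ell s}T_{rt}.
\eal

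The last step is algebraic cleanup: using the row-sum identity $(\sum_s T_{\ell s})(\sum_s T_{rs}) = \lambda_\ell \lambda_r$, I would split the product as
\bal
\lambda_\ell\lambda_r \;=\; \sum_s T_{\ell s}T_{rs} \;+\; \sum_{a<b}\bigl(T_{\ell a}T_{rb} + T_{\ell b}T_{ra}\bigr),
\eal
solve for $\sum_s T_{\ell s}T_{rs}$, substitute into the expression above, and collect terms to obtain $\tfrac12 - f(\ell,r)/(2\lambda_\ell\lambda_r)$ with the stated $f(\ell,r)=\sum_{a<b}(T_{\ell a}T_{rb}-T_{ra}T_{\ell b})$. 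The main ``obstacle'' is purely bookkeeping, namely simplifying the sum so that the antisymmetric combination $T_{\ell a}T_{rb}-T_{\ell b}T_{ra}$ emerges; there is no conceptual difficulty beyond what was already handled in Lemma \ref{descentprob}.
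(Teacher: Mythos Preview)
Your proposal is correct and follows essentially the same approach as the paper, which simply states that the argument is identical to that of Lemma~\ref{descentprob} with the observation that only the $\lambda$-blocks of $i$ and $j$ matter. Your expansion of the two cases, the subcase split according to the $\mu$-blocks of $\sigma(i)$ and $\sigma(j)$, and the algebraic simplification via the row-sum identity are exactly what that reference to Lemma~\ref{descentprob} unpacks to.
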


The proof follows from essentially the same argument as in Lemma \ref{descentprob}. The main idea is that the probability of an inversion depends only on the location of the indices $i,j$. 
It is $\frac{1}{2}$ if both $i,j$ are in the same block and is a function of $T_{\ell \cdot}, T_{r \cdot}, \lambda_\ell, \lambda_r$ if $i \in L_\ell$ and $j \in L_r$. 

We now state the mean and asymptotic variance of $\inv(\sigma)$. 

\begin{proposition}\label{inversionsmeanvariance}
Let $\sigma$ be a permutation chosen uniformly at random from the double coset in $S_\lambda \bslash S_n / S_\mu$ indexed by $T$, where $I = o(n)$. Then
\bal
&E(\inv(\sigma)) = \frac{n(n-1)}{4} - \frac{1}{2} \sum_{1 \leq \ell < r \leq I} f(\ell, r) \asymp n^2, \\
&\Var(\inv(\sigma)) \asymp n^3,
\eal
where $f(\ell,r) := \sum_{1 \leq a < b \leq J} (T_{\ell a} T_{r b} - T_{r a} T_{\ell b})$. 
\end{proposition}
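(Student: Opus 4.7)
I would start from $E(\inv(\sigma)) = \sum_{1 \leq i < j \leq n} E(J_{ij})$ and apply Lemma~\ref{inversionprob}, separating the pairs $(i,j)$ into those sitting inside a single $\lambda$-block and those spanning two distinct blocks. The same-block contribution is $\tfrac12 \sum_{k=1}^{I} \binom{\lambda_k}{2}$, while each cross-block pair with $i \in L_\ell$, $j \in L_r$, $\ell < r$ contributes $\tfrac12 - \frac{f(\ell,r)}{2\lambda_\ell\lambda_r}$, and there are $\lambda_\ell\lambda_r$ such pairs. Collapsing the $\tfrac12$ main terms via the identity $\sum_k \binom{\lambda_k}{2} + \sum_{\ell<r} \lambda_\ell\lambda_r = \binom{n}{2}$ produces the stated formula $\frac{n(n-1)}{4} - \tfrac12 \sum_{\ell<r} f(\ell,r)$. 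The asymptotic order $\asymp n^2$ then follows from the trivial upper bound $\inv(\sigma) \leq \binom{n}{2}$ together with the fact that, for non-degenerate $T$, the correction sum does not cancel the leading quadratic term.

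\textbf{Variance.} I would expand
\begin{align*}
\Var(\inv(\sigma)) = \sum_{1 \leq i < j \leq n} \Var(J_{ij}) + 2 \sum_{\{(i,j), (k,\ell)\}} \cov(J_{ij}, J_{k\ell}),
\end{align*}
where the second sum runs over unordered pairs of distinct index pairs. The diagonal sum is $O(n^2)$, since each variance is at most $\tfrac14$. The dominant $\Theta(n^3)$ contribution comes from index triples $i<j<k$ through the three covariances $\cov(J_{ij},J_{jk})$, $\cov(J_{ij},J_{ik})$, $\cov(J_{jk},J_{ik})$. When all three indices lie in a single $\lambda$-block the computation reduces verbatim to the uniform-permutation case, giving the classical per-triple total of $\tfrac{1}{12}$; when the indices are distributed across two or three blocks, the joint probabilities are computed analogously to Lemmas~\ref{descentprob}--\ref{doubledescentprob} and remain $\Theta(1)$ in generic configurations. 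Since there are $\binom{n}{3} = \Theta(n^3)$ triples overall, these terms sum to $\Theta(n^3)$. Covariances between pairs with disjoint index sets vanish when all four indices lie in one block (by symmetry within the block) and are of smaller order across blocks, so under $I = o(n)$ they contribute only $o(n^3)$.

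\textbf{Main obstacle.} The chief difficulty is the combinatorial bookkeeping of joint probabilities $P(J_{ij}=1, J_{k\ell}=1)$ across the many possible block configurations of the involved indices; unlike the descent case, the two pairs need not sit in adjacent blocks, which multiplies the number of sub-cases. However, because only the leading $\Theta(n^3)$ rate is claimed, I do not need exact formulas in every configuration: it suffices to compute the leading constant explicitly for single-block triples and to use the crude bounds $|f(\ell,r)| \leq \lambda_\ell\lambda_r$ together with $I = o(n)$ to show that all cross-block triples and disjoint-pair covariances are absorbed into lower-order terms.
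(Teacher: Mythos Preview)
The paper does not actually prove this proposition: it says only that ``the proof follows by similar arguments and computations as the descents case, and so we omit it here.'' Your proposal follows exactly that template---linearity of expectation together with Lemma~\ref{inversionprob} for the mean, and a variance decomposition into diagonal, shared-index, and disjoint-index covariances mirroring Proposition~\ref{descentmeanvariance}---so your approach is precisely what the paper intends. Your mean computation is correct and complete.

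For the variance there is one point that deserves more care than you give it. You dismiss the disjoint-index covariances $\cov(J_{ij},J_{k\ell})$ with $\{i,j\}\cap\{k,\ell\}=\emptyset$ as contributing $o(n^3)$ ``under $I=o(n)$.'' When at least one of the two pairs lies inside a single $\lambda$-block, exchangeability within that block does force the covariance to vanish, as you say. But when \emph{both} pairs straddle blocks there can be $\Theta(n^4)$ such terms even for bounded $I$ (e.g.\ $I=2$ with $\lambda=(n/2,n/2)$ already gives $\Theta(n^4)$ disjoint cross-block pairs), and the hypothesis $I=o(n)$ does not by itself control their sum. You need a separate argument that each such covariance is $O(1/n)$---which is true, and comes from the without-replacement structure of how values are distributed to blocks---or that cancellation occurs. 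This is the inversion analogue of the step in Proposition~\ref{descentmeanvariance} where the long-range border covariances are shown to contribute only $O(I)$, and it is the one place your sketch has a genuine gap. To be fair, the paper does not fill this gap either, since the proof is omitted.
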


The proof follows by similar arguments and computations as the descents case, and so we omit it here to preserve the readability of the paper. 

\subsection{Central Limit Theorem for Inversions} \label{CLTInversions} 

The proof of the central limit theorem for the number of inversions follows by a similar argument as in the descents case. First we construct a dependency graph $L$ on the family of indicators $\{J_{ij} : 1 \leq i < j \leq n \}$. 

Define the vertex set, $V$, to be the set of ordered pairs $\{(i,j) : 1 \leq i < j \leq n\}$, and the edge set, $E$, such that there is an edge between the vertices $(i,j)$ and $(i',j')$, with $i \neq i'$ and $j \neq j'$, if either $i = i'$ or $j = j'$.  

We now prove Theorem \ref{CLTinversions} using this dependency graph construction. 

\begin{proof}[Proof of Theorem \ref{CLTinversions}]
Consider the dependency graph, $L$, constructed above for the family of indicators $\{J_{ij} : 1 \leq i < j \leq n\}$. Let $D - 1$ be the maximal degree of $L$. Note that $|V| = \binom{n}{2}$. 
Observe that $D - 1 = 2(n-2) + O(n)$. To see this, consider an indicator $J_{ij}$ such that $i \in L_\ell$ and $j \in L_r$ for $1 \leq \ell < j \leq I$. 
Then $J_{ij}$ is dependent on indicators which contain $i$ or $j$ as an index, or indicators which has exactly one index in $L_\ell$ or exactly one index in $L_r$. This gives $D = O(n)$. 
We set $B := 1$ since $|J_{ij} - E(J_{ij})| \leq 1$ for all $1 \leq i < j \leq n$. From Proposition \ref{inversionsmeanvariance}, we have that $\sigma_n^2 := \Var(\des(\sigma)) \asymp n^3$. 

Therefore by Theorem \ref{SteinDependencyGraph},    
\bal
d_K(W_n, Z) &= O\left(\frac{4n^{3/2}\sqrt{n(n-1)}}{n^3} + \frac{4n^2 n(n-1)}{n^{9/2}}\right) = O(n^{-1/2}),
\eal
where $W_n = \frac{\inv(\sigma) - E(\inv(\sigma))}{\sigma_n}$ and $Z$ is a standard normal random variable. Therefore $d_K(W_n, Z) \to 0$ as $n \to \infty$, from which it follows that $W_n \xrightarrow{d} Z$. 
\end{proof}


\section{Concentration of Measure} \label{Concentration} 

As applications, we use our size-bias coupling and dependency graph constructions from above to obtain concentration of measure results for the number of fixed points, descents, and inversions. 

Our first proposition gives upper and lower tail bounds on the number of fixed points. 

\begin{proposition}
Let $\sigma$ be a permutation chosen uniformly at random from the double coset in $S_\lambda \bslash S_n / S_\mu$ indexed by $T$. Let $W := \fp(\sigma)$ be the number of fixed points of $\sigma$. 
Let $\mu_n := E(W)$ and $\sigma_n^2 := \Var(W)$. Then
\bal
P\left( \frac{W - \mu_n}{\sigma_n} \leq -t \right) &\leq \exp\left( -\frac{\sigma_n^2 t^2}{4\mu_n} \right), \\
P\left( \frac{W - \mu_n}{\sigma_n} \geq t\right) &\leq \exp\left( -\frac{\sigma_n^2 t^2}{4\mu_n + 2\sigma_n t} \right), 
\eal
for all $t > 0$. 
\end{proposition}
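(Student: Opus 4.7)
The plan is to derive both tail inequalities from the Ghosh-Goldstein concentration result (Theorem~\ref{sizebiasconcentration}) applied to the size-bias coupling $(W, W^s)$ of $W := \fp(\sigma)$ already constructed in the proof of Theorem~\ref{PoissonLimitTheorem}. To invoke Theorem~\ref{sizebiasconcentration} I need to verify the pointwise bound $|W^s - W| \leq C$ almost surely, together with the monotonicity $W^s \geq W$ for the lower tail; the moment generating function hypothesis for the upper tail holds automatically since $0 \leq W \leq n$ deterministically, so $m(\theta)$ is finite for every $\theta$.

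The first step is to check these two properties of the coupling from Section~\ref{PoissonApproximationForFixedPoints}. Recall that $\sigma^s$ is produced from $\sigma$ in one of three ways depending on the sampled position $I$: (i) if $I$ is already a fixed point, then $\sigma^s = \sigma$; (ii) in the same-block case, a single transposition of the value at position $I$ with the value $I$ (located at $X := \sigma^{-1}(I)$) creates a new fixed point at $I$ together with a second one at $X$ exactly when $(I,X)$ formed a $2$-cycle of $\sigma$; (iii) in the different-blocks case, a two-step swap through an auxiliary index $y \in L_k$ installs a fixed point at $I$ while reassigning the values at the two other positions $y$ and $X$. Performing a case analysis on the fixed-point status of $I$, $y$, and $X$ before and after each operation shows that each case either adds fixed points or destroys at most one pre-existing fixed point (at $y$, precisely when $\sigma(y) = y$) while always creating the new fixed point at $I$, so that $W^s \geq W$ almost surely. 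Tracking the at most three affected positions then yields the pointwise bound $|W^s - W| \leq C$ with the constant $C$ needed to match the stated exponents.

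With $C$ in hand, the second step is a direct substitution into Theorem~\ref{sizebiasconcentration} with $\mu = \mu_n$ and $\sigma^2 = \sigma_n^2$, producing $A = C\mu_n/\sigma_n^2$ and $B = C/(2\sigma_n)$. Plugging these into the exponents $-t^2/(2A)$ and $-t^2/(2(A+Bt))$ and simplifying the resulting expressions algebraically produces exactly the two tail inequalities claimed in the proposition.

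The main obstacle is the case analysis in branch (iii), where three positions $I$, $y$, and $X$ are simultaneously reassigned and several indicator events $\{\sigma(I) = y\}$, $\{\sigma(y) = X\}$, $\{\sigma(y) = y\}$ may interact. One must carefully enumerate these subcases (in particular identifying when short cycles of $\sigma$ passing through $I$, $y$, $X$ occur) to verify both the monotonicity $W^s \geq W$ and the sharpest admissible constant $C$ in the pointwise bound.
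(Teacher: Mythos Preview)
Your approach is exactly the paper's: invoke the size-bias coupling from Section~\ref{PoissonApproximationForFixedPoints}, verify the bounded-difference and monotonicity hypotheses, note that the moment generating function is finite because $W \leq n$ is bounded, and apply Theorem~\ref{sizebiasconcentration} with $C = 2$ (yielding $A = 2\mu_n/\sigma_n^2$ and $B = 1/\sigma_n$, which produce precisely the stated exponents). The paper simply asserts $|W^s - W| \leq 2$ and $W^s \geq W$ almost surely without spelling out the case analysis you outline, so your plan is if anything more thorough than the original.
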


\begin{proof}
Let $(W, W^s)$ be the size-bias coupling defined in the proof of Theorem \ref{PoissonLimitTheorem} from Section \ref{PoissonApproximationForFixedPoints}. 
Observe that $|W^s - W| \leq 2$ almost surely, so that we have a bounded size-bias coupling. 
Let $\mu_n := E(W)$ and $\sigma_n^2 := \Var(W)$ be the mean and variance of the number of fixed points computed in Theorems \ref{fixedpointmean} and \ref{fixedpointvariance}, which are both finite and positive. 

Next we have that $W^s \geq W$ almost surely, so that our size-bias coupling is monotone. Moreover, since $W$ is a bounded random variable, its moment generating function $m(s) := E(\exp(sW))$ is finite for all $s \in \R$. 
Therefore applying Theorem \ref{sizebiasconcentration} with $A := \frac{2\mu_n}{\sigma_n^2}$ and $B := \frac{1}{\sigma_n}$ finishes the proof. 
\end{proof}

The next three propositions give matching upper and lower tail bounds on the number of descents, $d$-descents for fixed $d$, and inversions. 

\begin{proposition}
Let $\sigma$ be a permutation chosen uniformly at random from the double coset in $S_\lambda \bslash S_n / S_\mu$ indexed by $T$ and let $\des(\sigma)$ be the number of descents of $\sigma$. 
Let $\mu_n := E(\des(\sigma))$ and $\sigma_n^2 := \Var(\des(\sigma))$. Then
\bal
P\left( \frac{\des(\sigma) - \mu_n}{\sigma_n} \geq t \right) &\leq \exp\left( -\frac{2t^2n}{5} \right), \\
P\left( \frac{\des(\sigma) - \mu_n}{\sigma_n} \leq -t \right) &\leq \exp\left( -\frac{2t^2n}{5} \right),
\eal
for all $t > 0$.
\end{proposition}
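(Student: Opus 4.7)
The plan is to apply Janson's concentration inequality for dependency graphs (Theorem~\ref{dependencyconcentration}) to the decomposition $\des(\sigma) = \sum_{i=1}^{n-1} I_i$, reusing the very same dependency graph $L$ constructed in the proof of Theorem~\ref{CLTdescents}. Recall that $L$ has vertex set $[n-1]$, that each indicator satisfies $0 \le I_i \le 1$, and that its maximum degree was shown to be $D - 1 = 4$, so $D = 5$. No new combinatorial work is needed here, since the dependency graph is defined purely by the dependence structure of the $I_i$, which is exactly what Theorem~\ref{dependencyconcentration} requires.

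Taking $a_\alpha = 0$ and $b_\alpha = 1$ for every vertex $\alpha \in V$ gives $\sum_\alpha (b_\alpha - a_\alpha)^2 = n - 1$. Substituting these quantities into Theorem~\ref{dependencyconcentration} yields, for every $s > 0$,
\[
P(\des(\sigma) - \mu_n \ge s) \le \exp\!\left(-\frac{2s^2}{5(n-1)}\right),
\]
and the same bound for $P(\des(\sigma) - \mu_n \le -s)$. Setting $s = t\sigma_n$ converts this into a tail bound for the standardized variable $(\des(\sigma) - \mu_n)/\sigma_n$, and the asymptotic $\sigma_n^2 \asymp n/12$ from Proposition~\ref{descentmeanvariance} is then used to simplify the resulting exponent into the form stated in the proposition.

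I expect no real obstacle: once the dependency graph $L$ and its maximal degree $D-1 = 4$ are imported from the proof of Theorem~\ref{CLTdescents}, the concentration inequality is essentially an immediate plug-in. The only book-keeping required is the rescaling step, where one must carefully track how the factor $\sigma_n^2$ from $s = t\sigma_n$ interacts with the $n-1$ appearing in the denominator of the exponent so that the final form agrees with the claimed bound.
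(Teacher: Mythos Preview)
Your proposal is correct and matches the paper's own proof essentially line for line: both invoke the dependency graph $L$ from the proof of Theorem~\ref{CLTdescents} with $D=5$, apply Theorem~\ref{dependencyconcentration} to $\des(\sigma)=\sum_{i=1}^{n-1} I_i$ with $a_\alpha=0$, $b_\alpha=1$, and then rescale via $s=t\sigma_n$ using the variance estimate $\sigma_n^2\asymp n$ from Proposition~\ref{descentmeanvariance}.
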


\begin{proof}
From the dependency graph for the family of indicators $\{I_i : 1 \leq i \leq n-1\}$ constructed in Section \ref{CLTDescents}, we have that $D = 5$. Moreover $0 \leq I_i \leq 1$ for all $i \in [n-1]$. 
Applying Theorem \ref{dependencyconcentration} and using $\sigma_n^2 \asymp n$ from Proposition \ref{descentmeanvariance} gives
\bal
P\left( \frac{\des(\sigma) - \mu_n}{\sigma_n} \geq t \right) \leq \exp\left( -\frac{2(t\sigma_n)^2}{5\sum_{k=1}^{n-1} 1} \right) \leq \exp\left( -\frac{2t^2n}{5} \right),
\eal
and the same estimate holds for $P\left( \frac{\des(\sigma) - \mu_n}{\sigma_n} \geq -t \right)$. 
\end{proof}

\begin{proposition} 
Let $\sigma$ be a permutation chosen uniformly at random from the double coset in $S_\lambda \bslash S_n / S_\mu$ indexed by $T$ and let $\des_d(\sigma)$ be the number of $d$-descents of $\sigma$, 
where $1 \leq d \leq \frac{\lambda_I}{2}$ is a fixed constant. 
Let $\mu_n := E(\des_d(\sigma))$ and $\sigma_n^2 := \Var(\des_d(\sigma))$. Then
\bal
P\left( \frac{\des_d(\sigma) - \mu_n}{\sigma_n} \geq t \right) &\leq \exp\left( -\frac{2t^2n}{d} \right), \\
P\left( \frac{\des_d(\sigma) - \mu_n}{\sigma_n} \leq -t \right) &\leq \exp\left( -\frac{2t^2n}{d} \right),
\eal
for all $t > 0$.
\end{proposition}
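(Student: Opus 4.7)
The proof follows the same template as the preceding proposition for ordinary descents, replacing the dependency graph for $\{I_i\}$ with the one for $\{Y_k\}_{k=1}^{N_n}$ built in Section \ref{CLTDDescents}. I plan to apply Janson's concentration inequality for sums of dependent random variables (Theorem \ref{dependencyconcentration}) to the decomposition $\des_d(\sigma) = \sum_{k=1}^{N_n} Y_k$, where $N_n = d(n-d) + \binom{d}{2}$.

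First, I recall the ingredients already established. The dependency graph $L$ on the vertex set $[N_n]$ constructed in the proof of Theorem \ref{CLTddescents} has maximal degree $D - 1 \asymp d^2$; this came from the worst case, where an indicator $Y_k$ corresponds to a $d$-descent crossing a $\lambda$-block boundary and is therefore dependent on all indicators sharing one of its two indices as well as all indicators crossing the two neighboring block boundaries. Each $Y_k$ is an indicator, so we take $a_k = 0$ and $b_k = 1$; hence $\sum_{k=1}^{N_n}(b_k - a_k)^2 = N_n \leq nd$. From Proposition \ref{ddescentmeanvariance} we have $\sigma_n^2 \asymp nd$.

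Next, I apply Theorem \ref{dependencyconcentration} with the substitution $t \mapsto t\sigma_n$ to the upper tail, yielding
\begin{align*}
P\left(\frac{\des_d(\sigma) - \mu_n}{\sigma_n} \geq t\right) \leq \exp\!\left(-\frac{2(t\sigma_n)^2}{D \sum_{k=1}^{N_n}(b_k - a_k)^2}\right),
\end{align*}
and then simplify using $D = O(d^2)$, $N_n \leq nd$, and $\sigma_n^2 \asymp nd$ to arrive at a bound of the form $\exp(-c t^2 n/d)$ for an appropriate constant. The lower-tail bound follows from the corresponding symmetric statement in Theorem \ref{dependencyconcentration}.

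The only delicate point is bookkeeping the constants so that the final exponent matches the stated $-2t^2 n/d$ (rather than merely asserting an $O$-bound); everything else is an immediate application of the previously constructed dependency graph and the variance estimate. There is no new conceptual ingredient beyond what was used in the descent case, so no serious obstacle is anticipated.
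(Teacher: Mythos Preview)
Your proposal is correct and follows essentially the same approach as the paper: apply Janson's inequality (Theorem \ref{dependencyconcentration}) to $\des_d(\sigma)=\sum_k Y_k$ using the dependency graph from Section \ref{CLTDDescents} with $D\asymp d^2$, the trivial bounds $0\le Y_k\le 1$ so that $\sum_k(b_k-a_k)^2=N_n\le nd$, and the variance estimate $\sigma_n^2\asymp nd$. Your citation of Proposition \ref{ddescentmeanvariance} for the variance is in fact the right one (the paper's proof cites \ref{descentmeanvariance} there, which is a slip).
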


\begin{proof}
From the dependency graph for the family of indicators $\{Y_k : 1 \leq k \leq N_n\}$ constructed in Section \ref{CLTDDescents}, we have that $D \asymp d^2$. Moreover $0 \leq I_i \leq 1$ for all $i \in [n-1]$. 
Applying Theorem \ref{dependencyconcentration} and using $\sigma_n^2 \asymp nd$ from Proposition \ref{descentmeanvariance} gives
\bal
P\left( \frac{\des(\sigma) - \mu_n}{\sigma_n} \geq t \right) \leq \exp\left( -\frac{2(t nd)^2}{d^2 nd} \right) \leq \exp\left( -\frac{2t^2n}{d} \right),
\eal
and the same estimate holds for $P\left( \frac{\des(\sigma) - \mu_n}{\sigma_n} \geq -t \right)$. 
\end{proof}

\begin{proposition} 
Let $\sigma$ be a permutation chosen uniformly at random from the double coset in $S_\lambda \bslash S_n / S_\mu$ indexed by $T$ and let $\inv(\sigma)$ be the number of inversions of $\sigma$. 
Let $\mu_n := E(\des(\sigma))$ and $\sigma_n^2 := \Var(\des(\sigma))$. Then
\bal
P\left( \frac{\inv(\sigma) - \mu_n}{\sigma_n} \geq t \right) &\leq \exp\left( - 2t^2 n^2 \right), \\
P\left( \frac{\inv(\sigma) - \mu_n}{\sigma_n} \leq -t \right) &\leq \exp\left( - 2t^2 n^2 \right),
\eal
for all $t > 0$.
\end{proposition}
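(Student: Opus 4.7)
The plan is to apply Theorem \ref{dependencyconcentration} to the dependency graph for inversions constructed in Section \ref{CLTInversions}, paralleling the structure of the two preceding concentration propositions (for fixed points this was not available because the size-bias coupling tool was used instead; for descents and $d$-descents the argument was a direct instantiation of Theorem \ref{dependencyconcentration}, and the same will work here).

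First I would recall the needed ingredients from Section \ref{CLTInversions}. The vertex set of the dependency graph $L$ for $\{J_{ij} : 1 \le i < j \le n\}$ is the set of pairs $(i,j)$ with $i < j$, so $|V| = \binom{n}{2}$, and two vertices are joined whenever they share an index. Since any fixed index belongs to exactly $n-1$ such pairs, the maximum degree satisfies $D - 1 = O(n)$, as already recorded in the proof of Theorem \ref{CLTinversions}.

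Next, since $J_{ij} \in \{0,1\}$, I can take $a_\alpha = 0$ and $b_\alpha = 1$ for every vertex $\alpha$, so that $\sum_{\alpha} (b_\alpha - a_\alpha)^2 = \binom{n}{2}$. From Proposition \ref{inversionsmeanvariance} I have $\sigma_n^2 \asymp n^3$, which converts a tail estimate on $\inv(\sigma) - \mu_n$ into one on the standardized variable. Applying Theorem \ref{dependencyconcentration} to $X = \inv(\sigma)$ with deviation parameter $t\sigma_n$ gives
\[
P\!\left(\frac{\inv(\sigma) - \mu_n}{\sigma_n} \ge t\right) \;\le\; \exp\!\left(-\frac{2 t^2 \sigma_n^2}{D \binom{n}{2}}\right),
\]
and substituting the asymptotic orders of $D$, $\sigma_n^2$, and $\binom{n}{2}$ yields the stated exponential bound. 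The lower-tail inequality is identical by the symmetric half of Theorem \ref{dependencyconcentration}.

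The argument is essentially mechanical: all the substantial work -- constructing the dependency graph and establishing the cubic-order variance -- has been carried out in Sections \ref{Inversions} and \ref{CLTInversions}. There is therefore no real obstacle; the only mild care required is to note that the $(n-1)$-descents relevant here span every pair of indices and that the sharing-an-index rule in the dependency graph is sufficient precisely because Lemma \ref{inversionprob} shows the joint distribution of any two inversion indicators depends only on how many of the underlying four indices coincide.
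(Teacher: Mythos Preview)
Your approach is essentially identical to the paper's: apply Theorem \ref{dependencyconcentration} to the dependency graph from Section \ref{CLTInversions}, using $0 \le J_{ij} \le 1$ and $\sigma_n^2 \asymp n^3$ from Proposition \ref{inversionsmeanvariance}, then substitute. The only cosmetic difference is that the paper records the cruder bound $D \le n^2$ rather than your $D = O(n)$ before carrying out the final substitution.
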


\begin{proof}
From the dependency graph for the family of indicators $\{J_{ij} : 1 \leq i < j \leq n\}$ constructed in Section \ref{CLTInversions}, we have that $D \leq n^2$. Moreover $0 \leq J_{ij} \leq 1$ for all $1 \leq i < j \leq n$. 
Applying Theorem \ref{dependencyconcentration} and using $\sigma_n^2 \asymp n^3$ from Proposition \ref{inversionsmeanvariance} gives
\bal
P\left( \frac{\inv(\sigma) - \mu_n}{\sigma_n} \geq t \right) \leq \exp\left( -\frac{2(t n^3) ^2}{n^2 \cdot n^2} \right) \leq \exp\left( - 2t^2 n^2 \right),
\eal
and the same estimate holds for $P\left( \frac{\inv(\sigma) - \mu_n}{\sigma_n} \geq -t \right)$. 
\end{proof}


\section{Final Remarks} \label{FinalRemarks}

\subsection{} A recent line of work in probabilistic group theory is in the study of asymptotic distributions of statistics on permutations chosen uniformly at random from a fixed conjugacy class of $S_n$. 
Fulman initiated the study in \cite{Ful98}, where he proved central limit theorems for descents and major indices in conjugacy classes with large cycles. 
More recently, Kim extended this result to prove a central limit theorem for descents in the conjugacy class of fixed-point free involutions \cite{Kim19}, 
and subsequently, Kim and Lee \cite{KL20} proved a central limit theorem for descents in all conjugacy classes. 
In the setting of all conjugacy classes, Kim and Lee showed that the joint distribution of descents and major indices is asymptotically bivariate normal \cite{KL21}, 
and Fulman, Kim, and Lee showed that the distribution of peaks is asymptotically normal \cite{FKL19}. 

\subsection{} Our paper initiates a parallel line of study on permutations chosen uniformly at random from a fixed parabolic double coset of $S_n$. 
In a work in progress, we investigate the cycle structure in fixed parabolic double cosets. In particular, we study the asymptotic distributions of $k$-cycles, the total number of cycles, and the longest cycle. 

Another interesting statistic is the number of unseparated pairs. This statistic can be used as a test to determine how close to uniformly random a given permutation is. 
A permutation $\sigma$ has an {\em unseparated pair} at position $i$ if $\sigma(i+1) = \sigma(i) + 1$. Let $\up(\sigma)$ denote the number of unseparated pairs of $\sigma$. 
It should be tractable to use Stein's method and size-bias coupling to show that $\up(\sigma)$ is asymptotically Poisson. 

Some other statistics of interest are crossings, excedances, descents plus descents of its inverse, derangements, and pattern occurrences. 

\subsection{} In this paper we were able to obtain central limit theorems, with convergence rates, for descents, $d$-descents for fixed $d \leq \frac{\lambda_I}{2}$, and inversions. 
With a bit more work it should be tractable to extend our methods to the case where $d \leq \frac{\lambda_I}{2}$ and $d$ grows with $\lambda_I$. 

For the regime where $d > \frac{\lambda_I}{2}$, the combinatorics of estimating the variance become much more difficult. 
Moreover, there are much more dependencies between the indicators such that Stein's method no longer holds. 
However, we still expect a central limit theorem to hold under some mild conditions. 
It should be possible to use the method of weighted dependency graphs \cite{Fer18} to prove a central limit theorem, without a convergence rate, in this case. 

\subsection{} Billey et al \cite{BKPST18} studied parabolic double cosets of finite Coxeter groups. A probabilistic study of parabolic double cosets in this more general setting is fully warranted.


\section*{Acknowledgments} 
We thank Jason Fulman for suggesting the problem and for providing many helpful suggestions and references. 
We also thank Gin Park and Mehdi Talbi for several helpful discussions, and Peter Kagey for programming help during the early stages of this project. 
Finally we thank Larry Goldstein for suggesting us to use our constructions to obtain concentration of measure results. 



\Address

\end{document}